\documentclass[final,leqno,letterpaper]{etna}
\usepackage{todonotes}
\usepackage{amsmath}  
\usepackage{amsfonts} 
\usepackage{amssymb}

%
\usepackage[nameinlink,capitalize]{cleveref}
\setbibdata{1}{xx}{46}{2017} 

\hypersetup{%
    pdftitle={Convergence Rates for Oversmoothing Banach Space Regularization},
    pdfauthor={Philip Miller, Thorsten Hohage},
    pdfkeywords={regularization, convergence rates, oversmoothing, BV-regualarization, 
sparsity promoting wavelet regularization, statistical inverse problems}
    }

\title{Convergence Rates for Oversmoothing Banach Space Regularization\thanks{%
This work has been supported by Deutsche Forschungsgemeinschaft (German Research Foundation, DFG) through Grant RTG 2088, project B01.
}}

\author{Philip Miller\and Thorsten Hohage\footnotemark[2]}

\shorttitle{Convergence rates for oversmoothing} 
\shortauthor{P.~MILLER AND T.~HOHAGE}

\newtheorem{assumption}[theorem]{Assumption}

\DeclareMathOperator*{\argmin}{argmin}

\newcommand{\inv}{^{-1}}             
\newcommand{\gobs}{g^\mathrm{obs}}

\newcommand{\nat}{\mathbb{N}_0}

\newcommand{\X}{\mathbb{X}}
\newcommand{\Y}{\mathbb{Y}}

\newcommand{\infbracket}[1]{\left[#1\right]}
\newcommand{\xh}{\hat{x}_\alpha}

\newcommand{\wav}{\mathcal{S}}
\newcommand{\bspace}[3]{b^{#1}_{{#2},{#3}}}
\newcommand{\Bspace}[3]{B^{#1}_{{#2},{#3}}(\Omega)}
\newcommand{\tildeB}[3]{\tilde{B}^{#1}_{{#2},{#3}}(\Omega)}
\newcommand{\Bospace}[3]{B^{#1}_{{#2},{#3}}}
\newcommand{\Bn}[4]{\left\| {#1} \right\|_{B^{#2}_{{#3},{#4}}}}
\newcommand{\bn}[4]{\left\| {#1} \right\|_{{#2},{#3},{#4}}}

\newcommand{\BV}{\mathrm{BV}}
\newcommand{\XR}{{\X_\mathcal{R}}}
\newcommand{\XA}{{\X_-}}
\newcommand{\XL}{{\X_{\mathrm{L}}}}
\newcommand{\fh}{\hat{f}_\alpha}
\newcommand{\gh}{\hat{g}_\alpha}
\newcommand{\Brspace}[3]{B^{#1}_{{#2},{#3}}(\mathbb{R}^d)}

\begin{document}

\maketitle

\renewcommand{\thefootnote}{\fnsymbol{footnote}}

\footnotetext[2]{Institute for Numerical and Applied Mathematics, University of G\"ottingen, Germany}

\begin{abstract}
This paper studies Tikhonov regularization for finitely smoothing operators in Banach spaces when the penalization 
enforces too much smoothness in the sense that the penalty term is not finite at the true 
solution. In a Hilbert space setting, Natterer (1984) showed with the help of spectral theory that optimal rates can be achieved in this situation. ('Oversmoothing does not harm.') 
For oversmoothing variational regularization in Banach spaces only very recently progress 
has been achieved in several papers on different settings, all of which construct families of smooth approximations to the true solution. In this paper we propose to construct such a family 
of smooth approximations based on $K$-interpolation theory. We demonstrate that this 
leads to simple, self-contained proofs and to rather general results. 
In particular, we obtain optimal convergence 
rates for bounded variation regularization, general Besov penalty terms and 
$\ell^p$ wavelet penalization with $p<1$ which cannot 
be treated by previous approaches. We also derive minimax optimal rates for white noise 
models. Our theoretical results are confirmed in numerical experiments. 
\end{abstract}

\begin{keywords}
regularization, convergence rates, oversmoothing, BV-regualarization, 
sparsity promoting wavelet regularization, statistical inverse problems
\end{keywords}

\begin{AMS}
65J22, 65N21, 35R20
\end{AMS}

\section{Introduction} Inverse problems occur in many areas of science and engineering when a quantity of interest $f$
is not directly accessibly, and only indirect effects $\gobs$ can be observed under noise. 
Very often such inverse problems are formulated in the form of operator equations 
\[
F(f) = g
\]
with some injective, but possibly nonlinear forward operator 
$F:D_F\to \Y$ mapping a subset $D_F$ of some Banach space to another Banach space $\Y$. 
Typically these operator equations are ill-posed in the sense that the inverse of $F$ fails 
to be continuous with respect to useful Banach norms. Such problems have been studied 
in numerous papers and monographs, we only refer to \cite{EHN:96,Scherzer_etal:09,Schuster2012}.

The probably most common and well-known method to deal with ill-posedness for inexact 
observed data $\gobs$ and compute stable reconstructions of $f$ is Tikhonov regularization.
If $\gobs$ belongs to $\Y$ with deterministic error bound
\begin{align}\label{eq:det_noise_level}
\|\gobs-F(f)\|_{\Y}\leq \delta,
\end{align}
we consider Tikhonov regularization in the form 
\begin{align}\label{eq:Tik_det}
S_\alpha (\gobs) &  := \argmin_{h \in  D_F \cap \XR} \, \infbracket{ \frac{1}{2 \alpha} \|\gobs- F(h)  \|_{\Y}^2  + \frac{1}{u}\|h\|_{\X_\mathcal{R}}^u} 
\end{align}
with a penalty term  $\frac{1}{u}\|h\|_{\X_\mathcal{R}}^u$ given by the norm of 
another Banach space $\X_{\mathcal{R}}$,  
a regularization parameter  $\alpha>0$, and an exponent $u\in (0,\infty)$.
Later in \Cref{sec:white_noise} we will also consider a variant of \eqref{eq:Tik_det} for  
a white noise model. 

Oversmoothing refers to the situation that the true solution $f$ does not belong to 
the space $\X_{\mathcal{R}}$. This situation is likely to occur if the norm of 
$\X_{\mathcal{R}}$ contains derivatives. The use of such penalty terms is common practice 
and was already proposed in the original paper by Tikhonov \cite{tikhonov:63}. 
As usual in regularization theory, we aim to bound the reconstruction error in terms of the 
noise level $\delta$. To give a specific example, we may be interested to bound the 
$L^p$-error for image deblurring with bounded variation regularization in the presence 
of texture. 

As the Tikhonov reconstructions in \eqref{eq:Tik_det} belong to $\X_\mathcal{R}$, but $f\notin \X_{\mathcal{R}}$, one cannot expect them to converge to 
$f$ in $\X_\mathcal{R}$. Instead, the reconstruction error will be measured in a 
weaker norm $\|\cdot\|_\XL$ indexed by the subscript $L$ for 'loss function'. 
We will further assume that $F$ is finitely smoothing in the sense that is satisfies 
a two-sided Lipschitz condition with respect to the norm of an even large space $\X_-$
(typically with negative smoothness index), and $\XL$ contains or coincides with 
some real interpolation space between  $\X_-$ and $\X_{\mathcal{R}}$. 

Let us briefly sketch the literature on oversmoothing regularization: 
In a first seminal paper \cite{natterer:84} inspiring numerous follow-up works, 
Natterer analyzed the case that 
$\Y$ is a Hilbert space, $F$ is linear, $u=2$, and $\X_\mathcal{R}$, $\XL$ and $\X_-$ all belong 
to a Hilbert scale, using the Heinz inequality for self-adjoint operators in 
Hilbert spaces as a main tool. In Banach space settings, only variational techniques 
are available, and usually a first step is to derive an inequality for the Tikhonov 
estimator by plugging the true solution $f$ into the Tikhonov functional. 
In the oversmoothing case this is not possible, 
and only recently progress has been achieve for this situation by several constructions 
of sequences of smooth elements approximating the true solution $f$: 
Hofmann \& Math\'e \cite{HM:18} (see also  \cite{HP:20}) consider nonlinear operators, still in Hilbert spaces, 
but their approach for constructing smooth approximations to $f$ (auxiliary elements in their terminology) is already essentially a special case of our approach. 
In \cite{GH:19} the case of
$\ell^1$-regularization with $\ell^2$-loss and a diagonal operator was studied 
using truncation of the sequence $f$. In a previous work \cite{MH:20} 
the authors analyzed oversmoothing 
in sparsity promoting wavelet regularization using hard thresholding to approximate $f$ 
by smooth elements. The most general results so far have been obtained by Chen, Hofmann \& Yousept \cite{CHY:21} who use functional calculus of sectorial operators  to construct smooth approximating sequences. 

In this paper we propose to construct a sequences of smooth approximations to $f$ 
based on $K$-interpolation theory. We believe that our analysis is 
significantly simpler than the one in \cite{CHY:21}. Moreover, we can derive optimal 
rates for some interesting cases such as $\BV$-regularization and Besov-space regularization 
with $p=1$ that do not seem to be covered by the analysis in \cite{CHY:21}. 

We also derive convergence rates for oversmoothing regularization with statistical 
noise models covering both Besov space and $\BV$ regularization. 
It seems that oversmoothing for statistical inverse problems has not received 
much attention in the literature so far, we are only aware of the 
preprint \cite{rastogi:20}. 

The remainder of this paper is organized as follows: In the following 
\Cref{sec:det_analysis} we introduce our setting and  
prove our main result (Theorem \ref{thm:rate_abstract}) for the deterministic noise model 
\eqref{eq:det_noise_level}. 
In \Cref{sec:besov} we formulate and discuss a convergence rate theorem for general 
oversmoothing Besov space regularization as a corollary to Theorem \ref{thm:rate_abstract}. 
In the following \Cref{sec:bv} we show $L^p$ error bounds for oversmoothing 
bounded variation regularization in a further corollary to Theorem \ref{thm:rate_abstract}. 
Oversmoothing regularization for statistical inverse problems is treated in 
\Cref{sec:white_noise} by 
adapting the proof of Theorem \ref{thm:rate_abstract}. 
We also discuss a parameter identification problem for an elliptic differential equation 
as a specific example and confirm the predicted convergence rates for this example in numerical experiments. 
The paper finishes with some conclusions and three appendices collecting results on  
interpolation theory, Besov spaces, and functions of bounded variation. 

\section{Deterministic analysis} \label{sec:det_analysis}
In this section we present our main result. We will assume that $\XR$ is a quasi-Banach space.
Recall that a quasi-Banach space $\X$ with norm $\|\cdot\|_{\X}$ satisfies all axioms 
of a Banach space except for the triangle inquality, which only holds true in the 
weaker form $\|x+y\|_{\X}\leq c_{\X}(\|x\|_{\X}+\|y\|_{\X})$ with some constant $c_{\X}\geq 1$ 
independent of $x,y\in\X$. The most prominent examples of quasi-Banach spaces that are 
not Banach spaces are $L^p$ and $\ell^p$ spaces with $p\in (0,1)$. 
$\ell^p$ penalty terms with $p\in (0,1)$ have been proposed by a number of authors 
(see, e.g.\ \cite{BL:09,
SB:14,zarzer:09}) 
with the aim to enforce more sparsity of the regularizers, and this is our reason for 
not confining ourselves to a Banach space penalties. Quasi-Banach space penalties to not cause 
any additional complications in our analysis and may thus be considered the natural 
setting for our approach. 

\subsection{Real interpolation of quasi-Banach spaces}\label{sec:Kinterpol} 
Our analysis is based on real interpolation theory of quasi-Banach spaces via the $K$-method which we will recall in the following.

Let $\X$ and $\XA$ be quasi-Banach spaces with a continuous embedding $\X\subset \XA$. 
The $K$-functional is given by
\begin{align}\label{eq:K_functional_embedded}
 K(t,f)= \inf_{h\in \X}\infbracket{\| f-h \|_\XA + t \| h\|_\X } \quad\text{for }  t>0 \text{ and } f\in \XA.
\end{align}
With this a scale of quasi-norms is defined by 
\[ \|f\|_{\left(\XA,\X\right)_{\theta,q}} = \left(\int_0^\infty \left(t^{-\theta} K(t,f)\right)^q\, \frac{ \mathrm{d} t}{t} \right)^\frac{1}{q}\] 
for  $0<\theta<1$ and $q\in [1,\infty)$ and 
\[ \|f\|_{\left(\XA,\X\right)_{\theta,\infty}} = \sup_{t>0} t^{-\theta} K(t,f) \] 
for $0\leq \theta \leq 1$. 
We obtain quasi-Banach spaces $\left( \XA, \X\right)_{\theta,q}$ consisting of all $f\in \XA$ with $\|f\|_{\left( \XA, \X\right)_{\theta,q}} <\infty$
 (see e.g. \cite[Sec.~ 3.11.]{Bergh1976}). 

\subsection{Assumptions and preliminaries} 
Our basic assumption on the forward operator $F$ is a two-sided Lipschitz condition with 
respect to the norm in $\X_-$. Similar conditions have been imposed in all previous papers 
on oversmoothing Tikhonov regularization that we are aware of. 
We start with $F$ defined on $\tilde{D}_F:= D_F\cap \X_\mathcal{R}$. 

\begin{assumption}\label{ass:oversmooth} 
Suppose $\X_\mathcal{R}$ is a quasi-Banach space and $\Y$ is a Banach space, $\tilde{D}_F\subset \X_\mathcal{R}$ and $F\colon \tilde{D}_F\rightarrow \Y$ a map.
Moreover, we assume that $\X_\mathcal{R}$ continuously embeds into a Banach space $\X_-$ with 
\[\frac{1}{M_1} \|f_1-f_2\|_{\X_-}\leq \|F(f_1)-F(f_2)\|_\Y \leq M_2 \|f_1-f_2\|_{\X_-} \quad\text {for all } f_1,f_2 \in \tilde{D}_F  \]
for some constants $M_1,M_2>0$. 
Finally, let $\xi\in [0,1)$. If $\xi\in (0,1)$, let $\XL$ be a Banach space and suppose that there exists a continuous embedding \[ (\XA,\XR)_{\xi,1}\subset \XL.\] If $\xi=0$, we set $\XL:=\XA.$
\end{assumption}

Note that under this assumption $F$ has a unique continuous extension again denoted by $F$ to the norm closure $D_F$ of $\tilde{D}_F$ in $\XA$.\\
We start with a lemma that introduces smooth approximations to $f$ 
based on real interpolation theory and provides estimates of their approximation rates 
in $\X_-$ and $\XL$ and their growth rate in $\XR$. 
\begin{lemma}[smooth approximations]\label{lem:aux_elements}
Suppose Assumption \ref{ass:oversmooth} holds true.
Let $\theta\in (\xi,1]$ and $\varrho>0$. Suppose  $f \in \left(\X_-,\XR\right)_{\theta,\infty}$ with ${\|f\|_{\left(\X_-, \XR\right)_{\theta,\infty}}\leq \varrho}$. Then there exists a net $(f_t)_{t>0}\subset \XR$ such that the following bounds hold true:
\begin{subequations}
\begin{align}\label{eq:aux_image}
\|f-f_t\|_{\XA}&\leq 2 \varrho t^\theta \\ 
\label{eq:aux_loss}
\|f-f_t \|_{\XL}&\leq  C_L \varrho t^{\theta-\xi}\\
\label{eq:aux_penalty}
\|f_t\|_\XR & \leq 2 \varrho t^{\theta-1}
\end{align}
\end{subequations}
Here $C_L>0$ denotes a constant that is independent of $\varrho,t$ and $f.$
\end{lemma}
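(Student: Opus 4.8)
The plan is to extract the net $(f_t)_{t>0}$ directly from the definition of the $K$-functional for the pair $(\XA,\XR)$. Since $f\in(\XA,\XR)_{\theta,\infty}$ with $\|f\|_{(\XA,\XR)_{\theta,\infty}}\le\varrho$, by definition of the sup-norm we have $t^{-\theta}K(t,f)\le\varrho$, i.e. $K(t,f)\le\varrho t^\theta$ for every $t>0$. Recalling that $K(t,f)=\inf_{h\in\XR}\bigl(\|f-h\|_{\XA}+t\|h\|_{\XR}\bigr)$, for each $t>0$ I would pick a near-minimizer $f_t\in\XR$ satisfying
\[
\|f-f_t\|_{\XA}+t\|f_t\|_{\XR}\le 2K(t,f)\le 2\varrho t^\theta.
\]
(A factor $2$ rather than $1+\varepsilon$ is convenient and harmless; one could also invoke the infimum directly if one is willing to take a limit, but the near-minimizer avoids that.) Dropping the nonnegative term $t\|f_t\|_{\XR}$ from the left-hand side gives \eqref{eq:aux_image}, and dropping $\|f-f_t\|_{\XA}$ instead and dividing by $t$ gives \eqref{eq:aux_penalty}. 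So the two "easy" bounds are immediate consequences of the choice of $f_t$.

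The one bound requiring a genuine argument is the loss-norm estimate \eqref{eq:aux_loss}. If $\xi=0$ then $\XL=\XA$ and the bound coincides with \eqref{eq:aux_image} (with $C_L=2$), so assume $\xi\in(0,1)$. The idea is to estimate $\|f-f_t\|_{\XL}$ using the continuous embedding $(\XA,\XR)_{\xi,1}\subset\XL$ from Assumption \ref{ass:oversmooth}, so it suffices to bound $\|f-f_t\|_{(\XA,\XR)_{\xi,1}}$, which by definition is $\int_0^\infty\bigl(s^{-\xi}K(s,f-f_t)\bigr)^q\,\tfrac{\mathrm ds}{s}$ with $q=1$, i.e. $\int_0^\infty s^{-\xi}K(s,f-f_t)\,\tfrac{\mathrm ds}{s}$. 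The key is to split this integral at $s=t$ and use two different elementary estimates of $K(s,f-f_t)$:
\begin{itemize}
\item For $s\ge t$ (the "large $s$" regime) I would use the trivial bound $K(s,f-f_t)\le\|f-f_t\|_{\XA}\le 2\varrho t^\theta$, which comes from taking $h=0$ in the infimum defining $K(s,\cdot)$; then $\int_t^\infty s^{-\xi}\cdot 2\varrho t^\theta\,\tfrac{\mathrm ds}{s}=\tfrac{2}{\xi}\varrho t^{\theta-\xi}$ since $\xi>0$.
\item For $s\le t$ (the "small $s$" regime) I would instead use $h=-f_t$ (so that $f-f_t-h=f$... wait, rather take $h$ such that $(f-f_t)-h\in\XA$ and $h\in\XR$): the choice $h=-f_t$ is not available directly; instead one uses subadditivity of $K$ in its second argument, $K(s,f-f_t)\le K(s,f)+K(s,-f_t)$, together with $K(s,f)\le\varrho s^\theta$ and $K(s,-f_t)\le s\|f_t\|_{\XR}\le 2\varrho s\,t^{\theta-1}$ (taking $h=-f_t$ in the infimum for $K(s,-f_t)$, so the $\XA$-term vanishes). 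This yields $K(s,f-f_t)\le\varrho s^\theta+2\varrho s\,t^{\theta-1}$, and integrating $s^{-\xi-1}$ times this against $\mathrm ds$ over $(0,t)$ gives, using $\theta>\xi$ and $1>\xi$, a bound of the form $C\varrho t^{\theta-\xi}$.
\end{itemize}
Adding the two pieces and using the embedding constant gives \eqref{eq:aux_loss} with $C_L$ depending only on $\xi$, $\theta$ and the embedding constant — in particular independent of $\varrho$, $t$, $f$.

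The main obstacle, such as it is, lies in the small-$s$ integral: one must be a little careful that the exponents coming out of $\int_0^t s^{\theta-\xi-1}\,\mathrm ds$ and $\int_0^t s^{1-\xi-1}\,\mathrm ds$ are genuinely positive — this is exactly where the hypotheses $\theta>\xi$ and $\xi<1$ are used, and it is why $\theta$ must be taken strictly larger than $\xi$ rather than merely $\ge\xi$. Everything else is bookkeeping. One should also note that $(f_t)$ is only a net, not necessarily measurable or continuous in $t$, but that is irrelevant here since the three required inequalities are pointwise in $t$; the integral in the definition of the $(\XA,\XR)_{\xi,1}$-norm is applied to the single fixed function $f-f_t$ and is an honest convergent integral by the estimates above. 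This also shows incidentally that each $f-f_t$ lies in $(\XA,\XR)_{\xi,1}$, hence in $\XL$, so the statement $\|f-f_t\|_{\XL}<\infty$ is part of what is proved.
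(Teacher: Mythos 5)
Your proposal is correct, and the treatment of \eqref{eq:aux_image} and \eqref{eq:aux_penalty} via a near-minimizer of the $K$-functional is identical to the paper's. For the key bound \eqref{eq:aux_loss}, however, you take a genuinely different route. The paper uses the same two pointwise estimates of $K(s,f-f_t)$ (the trivial bound for $s\geq t$, and $K(s,f)+s\|f_t\|_{\XR}$ for $s\leq t$) but packages them as the statement $\|f-f_t\|_{(\XA,\XR)_{\theta,\infty}}\leq 3\varrho$, and then invokes the reiteration theorem $(\XA,\XR)_{\xi,1}=(\XA,(\XA,\XR)_{\theta,\infty})_{\xi/\theta,1}$ together with the interpolation inequality $\|\cdot\|_{\XL}\leq c\,\|\cdot\|_{\XA}^{1-\xi/\theta}\|\cdot\|_{(\XA,\XR)_{\theta,\infty}}^{\xi/\theta}$ applied to $f-f_t$. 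You instead bound $\|f-f_t\|_{(\XA,\XR)_{\xi,1}}$ directly from the integral definition, splitting at $s=t$ and integrating the same two pointwise estimates; the positivity of the exponents $\theta-\xi$ and $1-\xi$ makes both pieces finite and of order $\varrho t^{\theta-\xi}$. Your computation is correct, and your approach is more elementary and self-contained (it avoids the reiteration theorem, Proposition~\ref{prop:reiteration}, entirely), at the cost of an explicit integral computation; the paper's version isolates the reusable fact that $f-f_t$ stays bounded in $(\XA,\XR)_{\theta,\infty}$. One small caveat applying equally to both arguments: in a quasi-Banach $\XR$ the step $K(s,f-f_t)\leq K(s,f)+s\|f_t\|_{\XR}$ picks up the quasi-triangle constant $c_{\XR}$, which only changes $C_L$ and is harmless.
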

\begin{proof}
Recall that ${\|f\|_{\left(\X_-, \XR\right)_{\theta,\infty}}\leq \varrho}$ implies $K(t,f)\leq \varrho  t^\theta$ for $t>0$ with the $K$-functional from \eqref{eq:K_functional_embedded}.
Hence, for every $t>0$ there exists $f_t\in \XR$ such that  
\[   \|f-f_t\|_{\X_-} + t \|f_t \|_\XR  \leq 2 K(t,f)\leq 2 \varrho t^\theta. \]
We neglect the first summand on the left hand side to see \eqref{eq:aux_penalty} and the second to obtain \eqref{eq:aux_image}. This finishes the proof for $\xi=0$, and we now turn to the case $\xi\in (0,1).$\\ 
As an intermediate step to \eqref{eq:aux_loss} we first prove that $\|f-f_t\|_{\left(\X_-, \XR\right)_{\theta,\infty}}\leq 3 \varrho$ for all $t>0$.  To this end we first consider $s\geq t$ and insert $h=0$ into the $K$-functional to wind up with 
\[ K(s, f-f_t )=\inf_{h\in \XR} \infbracket{ \|f-f_t-h\|_{\X_-} + s \|h\|_\XR }
\leq \|f-f_t\|_{\X_-}\leq 2\varrho t^\theta \leq 2\varrho s^\theta.
\] 
For $s\leq t$ we substitute $h=h^\prime -f_t$ and use the triangle inequality in $\XR$ to estimate 
\begin{align*}
K(s, f-f_t ) & = \inf_{h^\prime\in \XR}\infbracket{ \|f-h^\prime\|_{\X_-} + s \|h^\prime - f_t\|_\XR } \\ 
& \leq K(s,f)+ s \|f_t\|_\XR    
\leq \varrho s^\theta + 2 \varrho s t^{\theta-1}
\leq 3 \varrho s^\theta. 
\end{align*}
From the last two inequalities we conclude that 
\[ \|f-f_t\|_{\left(\X_-, \XR\right)_{\theta,\infty}}=\sup_{s>0} s^{-\theta} K( f-f_t,s ) \leq  3 \varrho.\]
By the reiteration theorem (see Proposition \ref{prop:reiteration}) we have
\begin{align*}
  \XL\supset   \left(\XA,\XR \right)_{\xi,1} =\left( \XA , \left(\XA,\XR \right)_{\theta,\infty}\right)_{\frac{\xi}{\theta},1}
\end{align*}
with equivalent norms of the latter two spaces. Hence, Lemma \ref{lem:intrerpolation_inequality} provides an interpolation inequality  $\|\cdot \|_{\XL}\leq c \|\cdot \|_{\XA}^{1-\frac{\xi}{\theta}}\cdot  \| \cdot \|_{\left(\XA,\XR \right)_{\theta,\infty}}^\frac{\xi}{\theta}$.
Inserting $f-f_t$ we finally get
\[  \|f-f_t \|_{\XL} \leq c \left(2\varrho t^\theta\right)^{1-\frac{\xi}{\theta}} \left(3\varrho\right)^\frac{\xi}{\theta}\leq 3c\varrho t^{\theta-\xi}.
\]
\end{proof}
\begin{remark} \label{rem:aux_converse}
From the existence of approximations as in Lemma \ref{lem:aux_elements} one can reclaim the regularity assumption as follows:
Let $f\in \XA$ and suppose that there exists a net $(f_t)_{t>0}\subset \XR$ such that the bounds \eqref{eq:aux_image} and \eqref{eq:aux_penalty} hold true. Inserting $f_t$ for $h$ in the $K$ functional yields $K(t,f)\leq 4 \varrho t^\theta$. Hence $f \in \left(\X_-,\XR\right)_{\theta,\infty}$ with ${\|f\|_{\left(\X_-, \XR\right)_{\theta,\infty}}\leq 4\varrho}$.
\end{remark}
\subsection{Abstract convergence rate result}
With the Lemma \ref{lem:aux_elements} at hand we are in position to prove the following convergence estimates as main result of this paper: 
\begin{theorem}[Error bounds]\label{thm:rate_abstract}
Suppose Assumption \ref{ass:oversmooth} holds true. Let $\theta\in (\xi, 1]$ and $\varrho>0$ . Assume that $f \in \left(\X_-,\XR\right)_{\theta,\infty}$ with ${\|f\|_{\left(\X_-, \XR\right)_{\theta,\infty}}\leq \varrho}$ and moreover that $D_F$ contains an $\XL$-ball with radius $\tau>0$ around $f$. 
\begin{enumerate}
\item(Bias bounds)  There exits a constant $C_b$ independent of $f,\varrho$ and $\tau$ such that 
\begin{subequations}
\begin{align}\label{eq:over_bias_image}
\|f-f_\alpha\|_{\XA} & \leq C_b \varrho^\frac{u}{(1-\theta)u+2\theta} \alpha^\frac{\theta}{(1-\theta)u+2\theta}, \\ 
\nonumber
\|f-f_\alpha\|_{\XL} & \leq C_b \varrho^\frac{(1-\xi)u+2\xi}{(1-\theta)u+2\theta}\alpha^\frac{\theta-\xi}{(1-\theta)u+2\theta} \quad\text{and}\\  \label{eq:over_r_bound}
\|f_\alpha \|_{\XR}&  \leq C_b \varrho^\frac{2}{(1-\theta)u+2\theta} \alpha^\frac{\theta-1}{(1-\theta)u+2\theta}
\end{align}
\end{subequations}
holds true for all $0<\alpha<\varrho^{-\frac{(1-\theta)u+2\xi}{\theta-\xi}} \tau^\frac{(1-\theta)u+2\theta}{\theta-\xi} $ and $f_\alpha\in S_\alpha(F(f))$ (see \eqref{eq:Tik_det}).
\item(Rates with a priori choice of $\alpha$) Let $0<c_l \leq c_r$. Suppose $\gobs\in \Y$
satisfies \eqref{eq:det_noise_level} with 
$0<\delta <\varrho^{-\frac{\xi}{\theta-\xi}} \tau^\frac{\theta}{\theta-\xi}$. 
Let $\alpha>0$ and $ \fh \in S_\alpha(\gobs)$.
There exists a constant $C_c$ independent of $f,\gobs, \varrho$, $\tau$ and $\delta$ such that 
\[  c_l \varrho^{-\frac{u}{\theta}} \delta^\frac{(1-\theta)u+2\theta}{\theta} \leq \alpha\leq c_r \varrho^{-\frac{u}{\theta}} \delta^\frac{(1-\theta)u+2\theta}{\theta}  \] 
implies the bounds
\begin{subequations}
\begin{align*}
\|f-\fh\|_{\XA} & \leq C_c \delta, \\ 
\|f-\fh\|_{\XL} & \leq C_c \varrho^\frac{\xi}{\theta} \delta^\frac{\theta-\xi}{\theta} \quad\text{and}\\ 
\|\fh \|_{\XR}&  \leq C_c \varrho^\frac{1}{\theta} \delta^\frac{\theta-1}{\theta}.
\end{align*}
\end{subequations}
\item (Rates with discrepancy principle) Let $1 < c_{\mathrm{D}} \leq C_{\mathrm{D}}$. Suppose $0<\delta <\varrho^{-\frac{\xi}{\theta-\xi}} \tau^\frac{\theta}{\theta-\xi}$, $\gobs\in \Y$ with $\|\gobs- F(f)\|_\Y\leq \delta$. Let $\alpha>0$ and $ \fh \in S_\alpha(\gobs)$. There exists a constant $C_d$ independent of $f,\gobs, \varrho$ and $\delta$ such that 
\[ c_{\mathrm{D}} \delta \leq \| \gobs - F(\fh)\|_\Y \leq C_{\mathrm{D}} \delta\] 
implies the following bounds  
\begin{align*}
\|f-\fh\|_{\XL} & \leq C_d \varrho^\frac{\xi}{\theta}\delta^\frac{\theta-\xi}{\theta} \quad\text{and} \\ 
\|\fh \|_{\XR}&  \leq C_d \varrho^\frac{1}{\theta} \delta^\frac{\theta-1}{\theta}.
\end{align*}
\end{enumerate}
\end{theorem}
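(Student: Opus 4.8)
The plan is to derive all three parts from the same variational inequality obtained by comparing the Tikhonov minimizer $f_\alpha$ (resp.\ $\hat f_\alpha$) with the smooth approximant $f_t$ from \Cref{lem:aux_elements}, then optimizing the free parameter $t>0$. For part~(1), I would start from minimality of $f_\alpha\in S_\alpha(F(f))$ (so the data term is $\|F(f)-F(f_\alpha)\|_\Y$, i.e.\ noise level $\delta=0$): plugging $h=f_t$ into \eqref{eq:Tik_det} gives
\begin{align*}
\frac{1}{2\alpha}\|F(f)-F(f_\alpha)\|_\Y^2+\frac{1}{u}\|f_\alpha\|_\XR^u
\le \frac{1}{2\alpha}\|F(f)-F(f_t)\|_\Y^2+\frac{1}{u}\|f_t\|_\XR^u.
\end{align*}
Using the two-sided Lipschitz condition of \Cref{ass:oversmooth} to replace $\|F(f)-F(f_t)\|_\Y\le M_2\|f-f_t\|_\XA$ and the bounds \eqref{eq:aux_image}, \eqref{eq:aux_penalty} yields
\begin{align*}
\frac{1}{2\alpha M_1^2}\|f-f_\alpha\|_\XA^2+\frac{1}{u}\|f_\alpha\|_\XR^u
\le \frac{M_2^2}{2\alpha}(2\varrho t^\theta)^2+\frac{1}{u}(2\varrho t^{\theta-1})^u.
\end{align*}
Then I would choose $t$ so the two right-hand terms balance, i.e.\ $t\sim \alpha^{1/((1-\theta)u+2\theta)}\varrho^{(u-2)/((1-\theta)u+2\theta)}$ (the exponent making $t^{2\theta}/\alpha$ and $t^{(\theta-1)u}$ comparable after accounting for the $\varrho$ powers), which directly produces the claimed bound on $\|f_\alpha\|_\XR$ from the second left-hand term and a bound on $\|f-f_\alpha\|_\XA$ of the right order from the first. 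To upgrade the $\XA$-bound to the $\XL$-bound I would interpolate: $f-f_\alpha$ lies in $(\XA,\XR)_{\xi,1}\subset\XL$, and \Cref{lem:intrerpolation_inequality} gives $\|f-f_\alpha\|_\XL\lesssim \|f-f_\alpha\|_\XA^{1-\xi}\,\|f-f_\alpha\|_{(\XA,\XR)_{\cdot}}^{\xi}$ — but here one must feed in a bound on $f-f_\alpha$ in the stronger interpolation norm, obtained from the already-established $\XA$- and $\XR$-bounds exactly as in the proof of \Cref{lem:aux_elements} (split the $K$-functional at scale $t$). Finally, the stated range of admissible $\alpha$ is precisely what guarantees that the resulting $\|f-f_\alpha\|_\XL$ stays below $\tau$, so that $f_\alpha$ lies in the $\XL$-ball inside $D_F$ where the Lipschitz condition and minimality are valid — this legitimacy check is what the $\alpha$-restriction encodes.

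For part~(2), I would repeat the same comparison but now with genuine data $\gobs$ satisfying \eqref{eq:det_noise_level}, so plugging $h=f_t$ gives the extra noise term: using $\|\gobs-F(f_t)\|_\Y\le\|\gobs-F(f)\|_\Y+M_2\|f-f_t\|_\XA\le\delta+2M_2\varrho t^\theta$ one obtains
\begin{align*}
\frac{1}{2\alpha M_1^2}\|f-\hat f_\alpha\|_\XA^2+\frac{1}{u}\|\hat f_\alpha\|_\XR^u
\lesssim \frac{1}{\alpha}\bigl(\delta^2+\varrho^2 t^{2\theta}\bigr)+(\varrho t^{\theta-1})^u,
\end{align*}
after an extra step absorbing $\|\gobs-F(\hat f_\alpha)\|_\Y$ into $\|F(f)-F(\hat f_\alpha)\|_\Y+\delta$ and squaring. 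Now choose $t=\delta^{1/\theta}\varrho^{-1/\theta}$ so that $\varrho t^\theta\sim\delta$, and impose $\alpha\sim\delta^2 t^{-2\theta}\cdot(\varrho t^{\theta-1})^{-u}\cdot\delta^{?}$ — more cleanly, the balance $\alpha\sim\varrho^{-u/\theta}\delta^{((1-\theta)u+2\theta)/\theta}$ in the hypothesis makes $\alpha\sim\delta^2(\varrho t^{\theta-1})^{-u}$, so all three right-hand contributions become comparable, and the bounds $\|f-\hat f_\alpha\|_\XA\lesssim\delta$ and $\|\hat f_\alpha\|_\XR\lesssim\varrho^{1/\theta}\delta^{(\theta-1)/\theta}$ pop out. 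The $\XL$-bound follows by the same interpolation argument as in part~(1). Part~(3) is the same but with $\alpha$ determined implicitly through the discrepancy level $\|\gobs-F(\hat f_\alpha)\|_\Y\sim\delta$: from the Tikhonov inequality and $c_{\mathrm D}\delta\le\|\gobs-F(\hat f_\alpha)\|_\Y$ one gets $\frac{1}{u}\|\hat f_\alpha\|_\XR^u\le\frac{1}{2\alpha}(\delta+2M_2\varrho t^\theta)^2-\frac{1}{2\alpha}c_{\mathrm D}^2\delta^2+\frac{1}{u}(2\varrho t^{\theta-1})^u$, and since $c_{\mathrm D}>1$ the negative term dominates the pure-$\delta$ contribution, leaving $\|\hat f_\alpha\|_\XR\lesssim\alpha^{-1/u}\varrho^{2/u}t^{2\theta/u}+\varrho t^{\theta-1}$; separately $\|\gobs-F(\hat f_\alpha)\|_\Y\le C_{\mathrm D}\delta$ together with $\|F(f)-F(\hat f_\alpha)\|_\Y\ge M_1^{-1}\|f-\hat f_\alpha\|_\XA$ gives $\|f-\hat f_\alpha\|_\XA\lesssim\delta$, and then $t=\delta^{1/\theta}\varrho^{-1/\theta}$ plus an argument controlling $\alpha$ from below (using the lower discrepancy bound) yields the two stated rates without ever needing an explicit a priori formula for $\alpha$.

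The main obstacle I anticipate is bookkeeping the $\varrho$-powers correctly through the $t$-optimization while simultaneously keeping the admissibility condition (that $\hat f_\alpha$, or $f_\alpha$, stays in the region of $D_F$ where \Cref{ass:oversmooth} applies) transparent; the exponents $(1-\theta)u+2\theta$, $(1-\theta)u+2\xi$, etc.\ in the hypotheses are exactly the outputs of this balancing, so I would carry the computation symbolically and only at the end read off that the displayed ranges for $\alpha$ and $\delta$ are the ones that close the argument. A secondary subtlety is the discrepancy-principle case, where one does not control $\alpha$ directly: I expect to need both inequalities $c_{\mathrm D}\delta\le\|\gobs-F(\hat f_\alpha)\|_\Y\le C_{\mathrm D}\delta$ — the upper one for the $\XA$-estimate and the lower one to extract enough from the Tikhonov functional to bound $\|\hat f_\alpha\|_\XR$ — and the positivity margin $c_{\mathrm D}>1$ is essential there, which is why it is hypothesized rather than $c_{\mathrm D}\ge 1$.
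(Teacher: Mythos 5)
Your overall strategy is the paper's: compare the Tikhonov functional at the minimizer with its value at the interpolation-theoretic approximant $f_t$ from Lemma \ref{lem:aux_elements}, balance $t$ against $\alpha$ (resp.\ $\delta$), read off the $\XA$-bound from the data-fidelity term and the $\XR$-bound from the penalty term, and interpolate for the $\XL$-bound. Two points need repair. First, you misattribute the role of the admissibility restriction on $\alpha$ (resp.\ $\delta$): it is not about $f_\alpha$ lying in the $\XL$-ball --- $f_\alpha$ belongs to $D_F$ by definition of the minimization --- but about the comparison element $f_t$. One must check $\|f-f_t\|_{\XL}\le C_L\varrho t^{\theta-\xi}<\tau$ so that $f_t\in D_F\cap\XR$ and may legitimately be inserted into the functional; as written, your very first inequality is unjustified, and the computation that fixes it is exactly \eqref{eq:over_xl_start}. (Your variant of the $\XL$-step, interpolating $f-f_\alpha$ between $\XA$ and $(\XA,\XR)_{\theta,\infty}$ via a $K$-functional splitting, does work; the paper instead interpolates $f_t-f_\alpha$ between $\XA$ and $\XR$ and adds $\|f-f_t\|_{\XL}$, which avoids re-running the splitting argument.)

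In part (3) your choice $t=\varrho^{-1/\theta}\delta^{1/\theta}$ does not make the negative term dominate: you need $(\delta+2M_2\varrho t^{\theta})^2\le c_{\mathrm D}^2\delta^2$, i.e.\ $2M_2\varrho t^{\theta}\le (c_{\mathrm D}-1)\delta$, which fails for that $t$ unless $c_{\mathrm D}>1+2M_2$. The fix is $t=c\,\varrho^{-1/\theta}\delta^{1/\theta}$ with $c$ small depending on $M_2$ and $c_{\mathrm D}$ (the paper's $\varepsilon$). Once that is done you can drop your ``lower bound on $\alpha$'' step entirely: with $\|\gobs-F(f_t)\|_\Y\le c_{\mathrm D}\delta\le\|\gobs-F(\fh)\|_\Y$ the data-fidelity terms compare the right way and cancel, giving $\|\fh\|_{\XR}\le\|f_t\|_{\XR}\le 2\varrho t^{\theta-1}$ with no $\alpha$ anywhere. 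Your longer route (extracting $\alpha\gtrsim\varrho^{-u/\theta}\delta^{((1-\theta)u+2\theta)/\theta}$ from the lower discrepancy bound and the functional comparison, then killing the $\varrho^2t^{2\theta}/\alpha$ term) also closes, but only after the same small-$t$ correction.
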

\begin{proof}
Let $(f_t)_{t>0}$ be as in Lemma \ref{lem:aux_elements}. 
\begin{enumerate}
\item We choose 
\begin{align}\label{eq:choise_of_t}
 t=C_L^{-\frac{1}{\theta-\xi}} \varrho^\frac{u-2}{(1-\theta)u+2\theta} \alpha^\frac{1}{(1-\theta)u+2\theta}
\end{align} with $C_L$ from  Lemma \ref{lem:aux_elements}. Inequality \eqref{eq:aux_loss} yields 
\begin{align}\label{eq:over_xl_start}
 \|f-f_t\|_{\XL}\leq C_L \varrho t^{\theta-\xi} = \varrho^\frac{(1-\xi)u+2\xi}{(1-\theta)u+2\theta}\alpha^\frac{\theta-\xi}{(1-\theta)u+2\theta}<\tau.
\end{align} 
Hence $f_t\in D_F$, i.e.\ we may insert $f_t$ into the Tikhonov functional and use the Lipschitz condition of $F$, \eqref{eq:aux_image} and \eqref{eq:aux_penalty} to wind up with 
\begin{align*}
\frac{1}{2\alpha} \| F(f)-F(f_\alpha)\|_\Y^2 + \frac{1}{u} \|f_\alpha\|_\XR^u & \leq \frac{1}{2\alpha} \| F(f)-F(f_t)\|_\Y^2 + \frac{1}{u} \|f_t\|_\XR^u  \\ 
 & \leq  \frac{M_2^2}{2\alpha} \|f-f_t\|_\XA ^2 + \frac{1}{u} \|f_t\|_\XR^u \\ 
 & \leq    \frac{2 M_2^2}{\alpha} \varrho^2 t^{2\theta} + \frac{2^u}{u} \varrho^u t^{(\theta-1)u} \\ 
 &= c_1 \varrho^\frac{2u}{(1-\theta)u+2\theta} \alpha^\frac{(\theta-1)u}{(1-\theta)u+2\theta}
\end{align*}
with $c_1$ depending on $M_2, C_L, u, \theta$ and $\xi$.
We neglect the penalty term and use the Lipschitz condition of the inverse of $F$ to obtain the first bound
\[ \|f-f_\alpha\|_{\XA}\leq M_1  \| F(f)-F(f_\alpha)\|_\Y \leq  (2 c_1)^\frac{1}{2} M_1 \varrho^\frac{u}{(1-\theta)u+2\theta} \alpha^\frac{\theta}{(1-\theta)u+2\theta}.\]
Together with \eqref{eq:aux_image} we record 
\[ \| f_t - f_\alpha \|_\XA \leq \| f - f_t \|_\XA + \| f - f_\alpha \|_\XA \leq c_2 \varrho^\frac{u}{(1-\theta)u+2\theta} \alpha^\frac{\theta}{(1-\theta)u+2\theta} \] 
with $c_2$ depending on $C_L, c_1, M_1, \theta$ and $\xi$.\\
Neglecting the data fidelity term in the above estimation of the Tikhonov functional provides 
\[ \|f_\alpha\|_{\XR} \leq  (c_1 u)^\frac{1}{u} \varrho^\frac{2}{(1-\theta)u+2\theta} \alpha^\frac{\theta-1}{(1-\theta)u+2\theta}.\] 
Furthermore, we see that $\|f_t\|_{\XR}$ satisfies the same upper bound.  With the triangle inequality in $\XR$ we combine 
\begin{align*}\label{eq:over_bias_unbounded_XR}
 \| f_t - f_\alpha \|_\XR \leq c_{\XR} \left(\|f_t\|_{\XR}+ \|f_\alpha\|_{\XR} \right) \leq 2 c_{\XR} (c_1 u)^\frac{1}{u} \varrho^\frac{2}{(1-\theta)u+2\theta} \alpha^\frac{\theta-1}{(1-\theta)u+2\theta}.
\end{align*} 
Next, the interpolation inequality ${\|\cdot \|_{\XL}\leq c_3 \|\cdot \|_\XA^{1-\xi} \cdot \|\cdot \|_\XR^\xi}$ (see Lemma \ref{lem:intrerpolation_inequality}) furnishes 
\[ \|f_t-f_\alpha \|_\XL \leq c_4 \varrho^\frac{(1-\xi)u+2\xi}{(1-\theta)u+2\theta} \alpha^\frac{\theta-\xi}{(1-\theta)u+2\theta}  \]
with $c_4$ depending on $c_1$, $c_2$, $c_3$, $c_{\XR}$, $u$ and $\xi$.
Together with \eqref{eq:over_xl_start} we finally obtain 
\[ \|f-f_\alpha\|_{\XL} \leq \|f-f_t \|_\XL +\|f_t-f_\alpha\|_\XL \leq (1+c_4) \varrho^\frac{(1-\xi)u+2\xi}{(1-\theta)u+2\theta}\alpha^\frac{\theta-\xi}{(1-\theta)u+2\theta}. \] 
\item Taking  $t= C_L^{-\frac{1}{\theta-\xi}}\varrho^{-\frac{1}{\theta}} \delta^\frac{1}{\theta}$ we have
\begin{align}\label{eq:over_apri_start}
\|f-f_t\|_{\XL}\leq C_L \varrho t^{\theta-\xi}= \varrho^\frac{\xi}{\theta} \delta^\frac{\theta-\xi}{\theta}< \tau.
\end{align}
This ensures $f_t\in D_F$.
We insert into the Tikhonov functional, use the elementary inequality $(a+b)^2\leq 2 a^2 + 2 b^2$ for $a,b\geq 0$, \eqref{eq:aux_image}, \eqref{eq:aux_penalty}, the Lipschitz condition of $F$ and the choice of $\alpha$ to estimate  
\begin{align*}
\frac{1}{2\alpha} \| \gobs - F(\fh)\|_\Y^2  & + \frac{1}{u}\|\fh \|_\XR ^u  \leq 
\\& \leq\frac{1}{2\alpha} \|\gobs -F(f)+F(f)- F(f_t)\|_\Y^2  +\frac{1}{u}\|f_t\|_\XR^u \\ 
&\leq \frac{\delta^2}{\alpha}+ \frac{M_2^2}{\alpha} \|f-f_t \|_\XA ^2  + \frac{2^u}{u} \varrho^u t^{(\theta-1)u} \\  &\leq (1+4M_2^2C_L^{-\frac{2\theta}{\theta-\xi}}) \frac{\delta^2}{\alpha} + \frac{2^u}{u}C_L^\frac{(1-\theta)u}{\theta-\xi}\varrho^\frac{u}{\theta}\delta^\frac{(\theta-1) u}{\theta}\\ 
& \leq  c_5 \varrho^\frac{u}{\theta}\delta^\frac{(\theta-1) u}{\theta}
\end{align*}
with depending on $c_l, M_2, C_L, u, \theta$ and $\xi$.

Now we follow the argument in $(a)$: From the last inequality and the triangle inequality in $\Y$ we get 
\begin{align*}
\|f-\fh\|_\XA & \leq M_1 \|F(f)-\gobs+\gobs -F(\fh)\|_\Y  \\ & \leq M_1\delta
+M_1 (2c_5)^\frac{1}{2}  \alpha^\frac{1}{2}\varrho^\frac{u}{2\theta} \delta^\frac{(\theta-1) u}{2\theta} \\ 
& \leq M_1 (1+ (2c_5c_r)^\frac{1}{2}) \delta\
\end{align*}
which together with \eqref{eq:aux_image} implies
 ${\|f_t-\fh\|_\XA\leq c_6 \delta}$ with $c_6$ depending on $C_L,$ $M_1,$  $c_5,c_r$ $\theta$ and $\xi$.
Moreover,
  $\|\fh\|_\XR, \|f_t\|_\XR \leq (c_5 u)^\frac{1}{u} \varrho^\frac{1}{\theta} \delta^\frac{\theta-1}{\theta}.$
Hence  \[\|f_t-\fh\|_\XR\leq 2c_{\XR} (c_5 u)^\frac{1}{u} \varrho^\frac{1}{\theta} \delta^\frac{\theta-1}{\theta}\] by the triangle inequality in $\XR$. \\
We use the above interpolation inequality to combine the last two inequalities to $\|f_t-\fh\|_\XL \leq c_7 \varrho^\frac{\xi}{\theta} \delta^\frac{\theta-\xi}{\theta} $ with $c_7$ depending on $c_3$, $c_6$, $c_5$, $c_{\XR}$, $u$ and $\xi$.
With \eqref{eq:over_apri_start} we conclude
\[ \|f-\fh\|_\XL \leq (1+c_7) \varrho^\frac{\xi}{\theta} \delta^\frac{\theta-\xi}{\theta}.\]
\item We set $\varepsilon:=\min\left\{\frac{c_{\mathrm{D}}^2-1}{2},4 M_2 C_L^{-\frac{2\theta}{\theta-\xi}}\right\}$. Then $\varepsilon>0$. Furthermore, we take  
\[ t=\left(\frac{(4 M_2)\inv \varepsilon }{1+\varepsilon\inv}\right)^\frac{1}{2\theta}  \varrho^{-\frac{1}{\theta}}  \delta^\frac{1}{\theta}. \] 
Then \eqref{eq:aux_image} reads as 
\begin{align}\label{eq:over_discrep_start_ima}
 \|f-f_t\|_\XA \leq 2\varrho t^\theta= \left(\frac{\varepsilon}{1+\varepsilon\inv}\right)^\frac{1}{2}M_2^{-\frac{1}{2}} \delta.
\end{align}
Due to \eqref{eq:aux_loss} we obtain
\begin{align}\label{eq:over_discrep_start_loss}
\|f-f_t\|_{\XL}\leq C_L \varrho t^{\theta-\xi}\leq  C_L \left((4 M_2)\inv \varepsilon\right)^\frac{\theta-\xi}{2\theta} \varrho^\frac{\xi}{\theta} \delta^\frac{\theta-\xi}{\theta}   \leq  \varrho^\frac{\xi}{\theta} \delta^\frac{\theta-\xi}{\theta}< \tau
\end{align}
which provides $f_t\in D_F$. \\
 In the following we use  the elementary inequality 
$ (a+b)^2 \leq (1+\varepsilon)a^2 +(1 +\varepsilon\inv)b^2$ for all $a,b\geq 0 $ (which is proven by expanding the square and 
applying Young's inequality on the mixed term) and \eqref{eq:over_discrep_start_ima} to estimate 
\begin{align*}
\|\gobs -F(f_t)\|_\Y^2 & \leq (1+\varepsilon)\delta^2 + (1+\varepsilon\inv)\|F(f)-F(f_t)\|_\Y^2  \\
&\leq (1+\varepsilon)\delta^2 + (1+\varepsilon\inv)M_2\|f-f_t\|_\XA^2 \\
& \leq  (1+2\varepsilon)\delta^2 \leq c_{\mathrm{D}}^2 \delta^2 \leq \|\gobs -F(\fh)\|_\Y^2.
\end{align*}
Therefore, a comparison of the Tikhonov functional taken at $\fh$ and $f_t$, and \eqref{eq:aux_penalty} yield 
\[ \|\fh\|_\XR\leq  \|f_t \|_\XR \leq 2\varrho t^{\theta-1}= c_8 \varrho^\frac{1}{\theta} \delta^\frac{\theta-1}{\theta} \] $c_8$ depending on $M_2,$ $\varepsilon$ and $\theta$. 
Hence $\|f_t-\fh\|_\XR\leq 2 c_{\XR} c_8 \varrho^\frac{1}{\theta} \delta^\frac{\theta-1}{\theta}$. Moreover, 
\[  \|g_t-F(\fh)\|_\Y \leq \|\gobs -g_t\|_\Y + \|\gobs -F(\fh)\|_\Y \leq 2  C_{\mathrm{D}} \delta.\]
Therefore, $\|f_t-\fh\|_\XA \leq  M_1 C_{\mathrm{D}} \delta$ by the Lipschitz condition. 
As above we conclude $\|f_t-\fh\|_\XL \leq c_9 \varrho^\frac{\xi}{\theta} \delta^\frac{\theta-\xi}{\theta} $ with $c_9$ depending on $c_3$, $C_{\mathrm{D}}$, $c_8$, $c_{\XR}$ and $\xi$ and use \eqref{eq:over_discrep_start_loss} to finish up with 
\[ \|f-\fh\|_\XL \leq (1+c_9)\varrho^\frac{\xi}{\theta} \delta^\frac{\theta-\xi}{\theta}.   \]  
\end{enumerate}
\end{proof}

We discuss our result in a series of remarks. 
\begin{remark}[Interior point]\label{rem:open ball} 
The requirement that $f$ be an interior point of the domain in $\XL$ may be weakened to the requirement that elements $f_t$ satisfying the bounds given in Lemma \ref{lem:aux_elements} belong to $D_F$ for $t$ small enough.
\end{remark} 
\begin{remark}[Influence of the exponent $u$] 
A strength of the above theorem is that it provides convergence rates for all exponents $u$. Note that the choice of $u$ does not influence the rate while it does influence the bias bounds and the parameter choice rule. An inspection of the a priori rule shows that a larger $u$ allows for a larger choice of the parameter $\alpha$. 
The flexibility in the choice of $u$ in our theory is a remarkable difference to many other variational convergence theories where one has to pick a specific exponent (see, e.g., \cite{HM:18, WSH:20}). 
The authors also do not expect any difficulty in generalizing this result to other exponents than $2$ in the data fidelity. 
\end{remark} 

\begin{remark}[Equivalent norms] 
The presented theory relies on a purely quasi-Banach space theoretic framework: As we do not appeal to any metric or convex notions like subdifferentials or convexity the result in Theorem \ref{thm:rate_abstract} stays the same up to a change of the constants if we change the norm on any of the occurring spaces up to equivalence. This has an important impact on regularization with wavelet  penalties that we will discuss in the next section.  \\ 
Once again this is a major difference to classical variational regularization theory. For example, it is not clear how the subdifferential of a norm involved in the source condition $A^\ast \omega\in \partial \mathcal{R} (f)$ for a linear operator $A$ changes if the norm is replaced by an equivalent one. Also classical variational source conditions are 
characterized by the smoothness of $\partial \mathcal{R}$ rather than the smoothness 
of $f$ (see \cite{WSH:20}), and the former may change if the norm in the penalty term is replaced by an equivalent norm. 
\end{remark}
\begin{remark}[Converse result]\label{rem:over_converse}
Suppose minimizers in \eqref{eq:Tik_det} exist for all $g\in \Y$ and $\alpha>0$ and $D_F=\XA$. In view of Remark \ref{rem:aux_converse} one can reclaim $f\in (\X_-,\XR)_{\theta,\infty}$ from the bias bound \eqref{eq:over_bias_image} together with \eqref{eq:over_r_bound} as with $\alpha(t)=c \varrho^{2-u}t^{(1-\theta)u+2\theta}$ for a suitable choosen constant $c$ depending only on $C_b$ a net $(f_{\alpha(t)})_{t>0}$ with $f_{\alpha(t)}\in S_{\alpha(t)}$ satisfies the bounds \eqref{eq:aux_image} and \eqref{eq:aux_penalty} in Lemma \ref{lem:aux_elements}.
\end{remark}

\begin{remark}[Limiting case $\theta=1$]
In the case $\theta=1$ the parameter choice rule in Theorem \ref{thm:rate_abstract} becomes $\alpha\sim \delta^2$. Here the results provides boundedness of the estimators $f_\alpha$ and $\fh$ in $\XR$. Due to  Proposition \ref{app:interpolation_limiting} we have $\XR\subset  \left(\X_-,\XR\right)_{1,\infty}$. The latter two spaces agree if $\XR$ is reflexive (see \cite[1.3.2. Rem.~2]{Triebel1978}). 
\end{remark} 

Before we illustrate our theorem by simple sequence space models, let us point out 
that in contrast to \cite{HM:18,CHY:21} we do not need to require that 
$c_{\mathrm{D}}=C_{\mathrm{D}}$ in the discrepancy principle. As also mentioned in \cite{CHY:21}, 
this is desirable in view of practical implementations.    

\begin{example}[Embedding operators in sequence spaces]
\begin{itemize}
\item 
Let $p \in (0,2)$ and $u\in (0,\infty)$. We consider $\XR=\ell^p$, $\Y=\XL=\XA=\ell^2$, \linebreak $F\colon \XR\rightarrow \Y$ with $x\mapsto x$ the embedding operator. Then Assumption \ref{ass:oversmooth} holds true with $\xi=0$. \\
Let $v\in (p,2)$, then we obtain 
\[ \left(\ell^2, \ell^p \right)_{\theta_v,\infty}= \omega\ell^v \quad\text{with} \quad\theta_v=\frac{p(2-v)}{v(2-p)} \]
(see e.g. \cite{F:78}).
Here $\omega\ell^v$ stands for the weak $\ell^v$-space given by the quasi-norm 
\[ \|x\|_{\omega\ell^v}^v = \sup_{\alpha>0} \alpha^v \# \left\{ |x_k| >\alpha\right\}. \]
Theorem \ref{thm:rate_abstract} yields that $x\in \omega\ell^v$ implies 
\[ \|x-\xh \|_2 = \mathcal{O}(\delta) \quad\text{and}\quad \|\xh\|_p= \mathcal{O}\left( \delta^\frac{2(p-v)}{p(2-v)}\right) \]
with $\xh\in \argmin_{z \in \ell^p } \infbracket{ \frac{1}{2\alpha} \|x^\mathrm{obs} - z\|_2^2 + \frac{1}{u}\|z\|_p^u } $, and $\|x-x^\mathrm{obs}\|_2\leq \delta$, and either of the parameter choice rules specified in Theorem \ref{thm:rate_abstract}.	
\item Once again let $p \in (0,2)$ and $u\in (0,\infty)$. Now we consider $\XR=\ell^p$, $\Y=\XA=\ell^\infty$, $\XL=\ell^2$ and again $F\colon \XR\rightarrow \Y$ the embedding operator. With $\xi=\frac{p}{2}$ the continuous embedding \[\left(\ell^{\infty}, \ell^p\right)_{\xi,1}\subset  \left(\ell^{\infty}, \ell^p\right)_{\xi,2}=\ell^2  \]  yields Assumption \ref{ass:oversmooth}. \\ 
For $v\in (p,2)$ we have
 $ \left(\ell^\infty, \ell^p \right)_{p/v,\infty}=\omega\ell^v$. Hence for $x\in \omega\ell^v$ we obtain 
\[  \|x-\xh \|_2 = \mathcal{O}(\delta^\frac{2-v}{2}) \quad\text{and}\quad \|\xh\|_p= \mathcal{O}\left( \delta^\frac{p-v}{p}\right) \]
with $\xh\in \argmin_{z \in \ell^p } \infbracket{ \frac{1}{2\alpha} \|x^\mathrm{obs} - z\|_{\infty}^2 + \frac{1}{u}\|z\|_p^u } $, and $\|x-x^\mathrm{obs}\|_\infty\leq \delta$, and either of the parameter choice rules specified in Theorem \ref{thm:rate_abstract}.	
\end{itemize}
\end{example}

\section{Besov space regularization}\label{sec:besov} 
In this section we apply Theorem \ref{thm:rate_abstract} to regularization of finitely smoothing operators with Besov space penalty term. 
For a comprehensive treatment of Besov spaces we refer to \cite{Triebel1992,triebel:08, 
Triebel2010} and also to \cite[Ch.~4]{GN:15} for a self-contained introduction and applications in statistics. Besov space $B^s_{p,q}(\mathbb{R}^d)$ for a smoothness 
index $s\in\mathbb{R}$, an integrability index $p\in(0,\infty]$ 
and a fine index $q\in (0,\infty]$ with quasi-norms $\|\cdot\|_{B^s_{p,q}(\mathbb{R}^d)}$ 
can be defined in several equivalent ways, among others via a 
dyadic partition of unity in Fourier space, via the modulus of continuity or 
via wavelet decompositions. In contrast to the analysis of non-oversmoothing Besov regularization in \cite{HM:19, Miller2021,MH:20, WSH:20}, it will not matter here, 
which of these equivalent norms is used in the following. 

In the following let $\Omega\subset \mathbb{R}^d$ be 
a bounded Lipschitz domain. Then $\Bspace rpq:= \{f|_{\Omega}:f\in B^s_{p,q}(\mathbb{R}^d)\}$ with $\Bn{g} rpq := \inf \{\|f\|_{B^s_{p,q}(\mathbb{R}^d)}: f|_{\Omega}=g\}$ 
is a quasi-Banach space, and even a Banach space if $p,q\geq 1$ (see \cite{triebel:08}). 
Some properties of these spaces and relations to other function spaces 
are summarized in \Cref{sec:app_besov}.

Throughout this section we use $\XR:=\Bspace rpq$ for fixed $r>0$ and $p,q\in (0,\infty]$  and  consider the regularization scheme 
\begin{align}\label{eq:Tik_det_besov}
S_\alpha (g) = \argmin_{h \in \tilde{D}_F } \infbracket{ \frac{1}{2 \alpha} \|g- F(h)  \|_{\Y}^2  + \frac{1}{u} \Bn{h} rpq^u}, \qquad g\in \Y
\end{align}
for a fixed exponent $u\in (0,\infty)$. A natural choice is $u=q$. 

\subsection{Convergence rate result}
We first formulate our assumptions on the forward operator.
Recall that $\Bspace s22 = W^s_2(\Omega)$ with equivalent norms for all $s\in \mathbb{R}$ (Proposition \ref{app:relation_lp_sob}).
\begin{assumption}\label{ass:besov} 
Suppose that $a\geq 0$ and $\Bspace rpq\subset \Bspace {-a}22$ with continuous embedding. 
Let $\tilde{D}_F\subset \Bspace rpq$, $\Y$ be a Banach space and $F\colon \tilde{D}_F \rightarrow \Y$ be a map satisfying 
\begin{align*}
\frac{1}{M_1} \Bn {f_1-f_2}{-a}22 \leq \|F(f_1)- F(f_2) \|_\Y  \leq M_2 \Bn {f_1-f_2}{-a}22\quad\text{ for all } f_1,f_2 \in \tilde{D}_F.
\end{align*}
for constants $M_1, M_2 >0.$
\end{assumption} 

The assumption of a continuous embedding $\Bspace rpq\subset \Bspace {-a}22$ is satisfied 
if $a+r>d\big(\frac{1}{p}-\frac{1}{2}\big)$ (see \ref{eq:besov_embed_q_neq})
. For $q=2$ even the condition 
$a+r\geq d\big(\frac{1}{p}-\frac{1}{2}\big)$ suffices (see \ref{eq:besov_embed_q_eq}). 

Now we state and prove the convergence rate result for oversmoothing Besov space regularization. We first state our theorem under the abstract smoothness condition given by the maximal real interpolation space in Theorem \ref{thm:rate_abstract} and discuss how to find more handy smoothness conditions in terms of Besov spaces afterwards. For the sake of brevity we do not state the bounds 
on the bias.
\begin{corollary}[Rates for oversmoothing Besov space regularization]\label{cor:over_besov}
Consider the regularization scheme \eqref{eq:Tik_det_besov} for some 
$p,q\in (0,\infty]$ with $q\leq p$, $r>0$ and $u\in (0,\infty),$ 
such that $\overline{p}:=\frac{2p(a+r)}{2a+pr}\geq 1$ (i.e.\ $p\geq \frac{2a}{2a+r}$)
and suppose Assumption \ref{ass:besov} holds true. 
Assume the true solution $f$ has smoothness index  $s\in (0,r]$ in the sense that 
\begin{align}\label{eq:besov_regularity}
f\in \left(\Bspace {-a}22,\Bspace rpq\right)_{\theta_s,\infty}\quad\text{for}\quad \theta_s:=\frac{s+a}{a+r} \quad \mbox{and}\quad
\|f\|_{\left(\Bspace {-a}22,\Bspace rpq\right)_{\theta_s,\infty}}\leq\varrho.
\end{align}  
for some $\varrho>0$ (see also Remark \ref{rem:over_besov_smoothness}). 
Suppose that the closure $D_F$ of $\tilde{D}_F$ in $\Bspace {-a}22$ contains a $\Bspace 0{\overline{p}}{\overline{p}}$-ball with radius $\tau>0$ around $f$. 
Suppose $\gobs\in \Y$ satisfies \eqref{eq:det_noise_level} for $0<\delta <\varrho^{-\frac{a}{s}} \tau^\frac{s+a}{s}$ and $ \fh \in S_\alpha(\gobs)$ defined in \eqref{eq:Tik_det_besov} for some $\alpha>0$. Let $0<c_l \leq c_r$ and $1 < c_{\mathrm{D}} \leq C_{\mathrm{D}}$. 
Then there is a constant $C_r$ independent of $f,\gobs, \varrho$, $\tau$ and $\delta$ such that either of the conditions
\[  c_l \varrho^{-\frac{u(a+r)}{s+a}} \delta^\frac{(2-u)s+2a+ur}{s+a} \leq \alpha\leq c_r \varrho^{-\frac{u(a+r)}{s+a}} \delta^\frac{(2-u)s+2a+ur}{s+a} \text{ and } \] \[ c_{\mathrm{D}} \delta \leq \| \gobs - F(\fh)\|_\Y \leq C_{\mathrm{D}} \delta \] 
on the choice of $\alpha$ implies the following bounds:
\begin{subequations}
\begin{align}
\Bn {f-\fh} {-a}22  & \leq C_r \delta, \\ 
\label{eq:main_besov_rate}
\Bn {f-\fh} {0}{\overline{p}}{\overline{p}} & \leq C_r \varrho^\frac{a}{s+a} \delta^\frac{s}{s+a}\\ 
\Bn {\fh} rpq&  \leq C_r \varrho^\frac{a+r}{s+a} \delta^\frac{s-r}{s+a}
\end{align}
\end{subequations}
\end{corollary}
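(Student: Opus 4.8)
The plan is to derive \Cref{cor:over_besov} as a direct specialization of \Cref{thm:rate_abstract}, so the bulk of the work is in verifying that Assumption \ref{ass:besov} together with the Besov-space machinery from \Cref{sec:app_besov} implies Assumption \ref{ass:oversmooth} with the right identifications of spaces and parameters. Concretely, I would set $\XR:=\Bspace rpq$, $\XA:=\Bspace{-a}22$, $\XL:=\Bspace0{\overline p}{\overline p}$, $u$ as given, and choose the interpolation parameter $\xi$ so that $\XL$ is (continuously embedded in) the real interpolation space $(\XA,\XR)_{\xi,1}$. The candidate is
\[
\xi=\frac{a}{a+r},
\]
and the key identity to establish is
\[
\bigl(\Bspace{-a}22,\Bspace rpq\bigr)_{\xi,1}\hookrightarrow\Bspace0{\overline p}{\overline p},
\qquad \overline p=\frac{2p(a+r)}{2a+pr},
\]
which is exactly the kind of statement that follows from the real-interpolation formula for Besov spaces (interpolation in the smoothness index produces a Besov space whose smoothness is the convex combination $\xi\cdot r+(1-\xi)\cdot(-a)=0$, and whose integrability index is governed by $1/\overline p=(1-\xi)/2+\xi/p$; the fine index drops to the embedding-friendly value because $q\le p$). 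This is where I expect the main obstacle to lie: pinning down the precise interpolation identity for Besov spaces with possibly different integrability indices and a quasi-Banach ($p<1$ or $q<1$) range, and checking that the hypotheses $q\le p$ and $\overline p\ge1$ are exactly what is needed for the identity and for $\XL$ to be a genuine Banach space as \Cref{ass:oversmooth} requires. I would cite the relevant proposition from \Cref{sec:app_besov} for this.

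Once the dictionary is set up, the smoothness hypothesis \eqref{eq:besov_regularity} is literally the hypothesis $f\in(\X_-,\XR)_{\theta_s,\infty}$ of \Cref{thm:rate_abstract} with $\theta_s=(s+a)/(a+r)$, and one checks $\theta_s\in(\xi,1]$ iff $s\in(0,r]$, which is assumed. The ball condition on $D_F$ transfers verbatim since $\XL=\Bspace0{\overline p}{\overline p}$. Then I would simply invoke parts (b) and (c) of \Cref{thm:rate_abstract} and substitute $\theta=\theta_s$, $\xi=a/(a+r)$ into the exponents there: the exponent $\theta-\xi$ becomes $s/(a+r)$, so $(\theta-\xi)/\theta=s/(s+a)$ and $\xi/\theta=a/(s+a)$, giving the loss bound $\varrho^{a/(s+a)}\delta^{s/(s+a)}$; similarly $(\theta-1)/\theta=(s-r)/(s+a)$ gives the $\XR$-bound $\varrho^{(a+r)/(s+a)}\delta^{(s-r)/(s+a)}$; and the $\XA$-bound is $C\delta$. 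For the parameter-choice window, substituting into $\varrho^{-u/\theta}\delta^{((1-\theta)u+2\theta)/\theta}$ and simplifying $(1-\theta_s)u+2\theta_s=((r-s)u+2(s+a))/(a+r)$ and dividing by $\theta_s=(s+a)/(a+r)$ yields the stated exponents $-u(a+r)/(s+a)$ on $\varrho$ and $((2-u)s+2a+ur)/(s+a)$ on $\delta$; likewise the admissibility condition $\delta<\varrho^{-\xi/(\theta-\xi)}\tau^{\theta/(\theta-\xi)}$ becomes $\delta<\varrho^{-a/s}\tau^{(s+a)/s}$.

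The only remaining routine points are: (i) noting $\Bspace s22=W_2^s(\Omega)$ so the display $\XA=\Bspace{-a}22$ matches the two-sided Lipschitz condition in Assumption \ref{ass:besov} verbatim, hence the Lipschitz hypothesis of \Cref{ass:oversmooth} holds with the same $M_1,M_2$; (ii) observing that the continuous embedding $\Bspace rpq\subset\Bspace{-a}22$ assumed in \Cref{ass:besov} is exactly the embedding $\XR\subset\XA$ needed to even define the $K$-functional; and (iii) relabeling the constant $C_c$ (resp.\ $C_d$) from \Cref{thm:rate_abstract} as $C_r$, absorbing the fixed norm-equivalence constants from the Besov interpolation identity and from $\Bspace s22=W_2^s(\Omega)$ — this is legitimate precisely because, as noted in the ``Equivalent norms'' remark, the abstract theorem is insensitive to passing to equivalent quasi-norms. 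I would close by remarking that this is the place where the quasi-Banach generality of \Cref{thm:rate_abstract} pays off, since for $p<1$ or $q<1$ the space $\Bspace rpq$ is only a quasi-Banach space.
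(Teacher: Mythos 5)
Your proposal is correct and takes essentially the same route as the paper: identify $\XR=\Bspace rpq$, $\XA=\Bspace{-a}22$, $\XL=\Bspace 0{\overline p}{\overline p}$, $\xi=\frac{a}{a+r}$, verify Assumption \ref{ass:oversmooth}, and substitute $\theta_s$ and $\xi$ into Theorem \ref{thm:rate_abstract}; all your exponent computations match. The one step you flag as the "main obstacle" is resolved in the paper exactly as you anticipate, via the chain $\left(\Bspace{-a}22,\Bspace rpq\right)_{\xi,1}\subset\left(\Bspace{-a}22,\Bspace rpq\right)_{\xi,\overline p}\subset\left(\Bspace{-a}22,\Bspace rpp\right)_{\xi,\overline p}=\Bspace 0{\overline p}{\overline p}$, where $\overline p\ge 1$ justifies the first embedding, $q\le p$ the second, and \eqref{eq:besov_interp_p_eq_q} the final identity (with the trivial case $a=0$, i.e.\ $\xi=0$ and $\XL=\XA=L^2(\Omega)$, handled separately).
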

\begin{proof}
We set  $\xi:=\frac{a}{a+r}$, $\XR=\Bspace rpq$, $\X_-:=\Bspace {-a}22$ and $\XL:=\Bspace 0 {\overline{p}}{\overline{p}}$, and verify Assumption \ref{ass:oversmooth}. The two-sided Lipschitz condition holds true due to Assumption \ref{ass:besov}.
 If $a=0$, then $\xi=0$ and we have $\overline{p}=2$. Therefore, $\X_-=\XL=\Bspace 022=L^2(\Omega)$. 
If $a>0$ we use $q\leq p$ and $1\leq \overline{p}$ to obtain the following chain of continuous embeddings:
\begin{align}\label{eq:over_besov_loss_inter}
\left(\Bspace {-a}22 , \Bspace rpq \right)_{\xi,1} 
\subset \left(\Bspace {-a}22 , \Bspace rpq \right)_{\xi,\overline{p}}\subset \left(\Bspace {-a}22 , \Bspace rpp \right)_{\xi,\overline{p}}=\Bspace 0{\overline{p}}{\overline{p}},
\end{align}
see \cite[Thm.~3.4.1.(b)]{Bergh1976} for the first embedding, 
\eqref{eq:besov_embed_sp_fixed} 
for the second, and \eqref{eq:besov_interp_p_eq_q} for the interpolation identity.  
This shows Assumption \ref{ass:oversmooth}, i.e.\ $\left(\XA , \XR \right)_{\xi,1} \subset \XL$,   and the result follows from Theorem \ref{thm:rate_abstract}. 
\end{proof}\\
In contrast to the analysis in \cite{HM:19, Miller2021,MH:20, WSH:20}, which is restricted to certain choices of $r$,$p$,$q$ and $u$, our only restrictions on the parameters are $r>0$, $q\leq p$, and $\overline{p}\geq 1$. 
We will see that the assumption $q\leq p$ can be dropped by some refined argument using a complex interpolation identity. 

We further discuss our result in the following remarks.
\begin{remark}[$L^{\overline{p}}$-loss]\label{rem:over_Lp_loss}
Suppose $p\leq 2$. Then $\overline{p}\leq 2$. Hence the continuous embedding $\Bspace 0 {\overline{p}}{\overline{p}}\subset L^{\overline{p}}(\Omega)$ (see \eqref{eq:besov_lp_pgeq1}) 
together with \eqref{eq:over_besov_loss_inter} yields $\left(\Bspace {-a}22 , \Bspace rpq \right)_{\xi,1} \subset L^{\overline{p}}(\Omega)$. Therefore, Corollary \ref{cor:over_besov} remains valid word for word if one replaces $\Bspace 0 {\overline{p}}{\overline{p}}$ by $L^{\overline{p}}(\Omega)$ in this case.
\end{remark}

\begin{remark}[Smoothness condition]\label{rem:over_besov_smoothness}
Suppose that $p,q\geq 1$. 
By the complex interpolation \eqref{eq:complex_besov_interp} we have: 
\begin{align}\label{eq:over_besov_complex_inter}
\left[\Bspace {-a}22, \Bspace rpq \right]_{\theta_s}=\Bspace s{p_s}{q_s} ,\,  p_s=\frac{2p(a+r)}{s(2-p)+2a+pr},\, q_s=\frac{2q(a+r)}{s(2-q)+2a+qr}.
\end{align}
With this we obtain a continuous embedding 
${\Bspace s{p_s}{q_s}\subset \left(\Bspace {-a}22, \Bspace rpq \right)_{\theta_s,\infty}}$
as for Banach spaces 
the complex interpolation space $\left[ \cdot , \cdot \right]_\theta$ is always continuously embedded in the real interpolation space $\left( \cdot , \cdot \right)_{\theta,\infty}$  (see \cite[Thm.~4.7.1.]{Bergh1976}).
Hence the statements in Corollary \ref{cor:over_besov} remain true if the smoothness assumption on $f$ formulated in terms of $\left(\Bspace {-a}22, \Bspace rpq \right)_{\theta_s,\infty}$ is replaced by 
\begin{align}\label{eq:over_besov_smoothness}
f\in \Bspace s{p_s}{q_s}\qquad \mbox{with}\qquad 
\Bn f s{p_s}{q_s}\leq\varrho.
\end{align} 
\end{remark}

\begin{remark}[Assumption $q\leq p$]
We also comment on the assumption $q\leq p$, again for the Banach space case 
$p,q\geq 1$: Using complex interpolation this restriction can be dropped as follows.
 Since the real interpolation space $\left( \cdot , \cdot \right)_{\theta,1}$ is always continuously embedded in  complex interpolation space (see \cite[Thm.~4.7.1.]{Bergh1976}) identity \eqref{eq:over_besov_complex_inter} yields a continuous embedding 
\[ \left(\Bspace {-a}22, \Bspace rpq \right)_{\xi,1} \subset\Bspace 0 {\overline{p}}{\overline{q}}\]  for $\overline{p}$ as in Corollary \ref{cor:over_besov} and $\overline{q}:=\frac{2q(a+r)}{2a+qr}$. Hence the statements in Corollary \ref{cor:over_besov} remain true in the case $q>p$ if one replaces $\Bspace 0{\overline{p}}{\overline{p}}$ by $\Bspace 0{\overline{p}}{\overline{q}}$.
\end{remark}


\begin{remark}[Other domains and boundary conditions]
For the sake of clarity we have confined ourselves to bounded Lipschitz domains 
$\Omega\subset\mathbb{R}^d$ and to the Besov spaces $\Bspace spq$. 
However, Corollary \ref{cor:over_besov} only relies on the interpolation identity 
\eqref{eq:besov_interp_p_eq_q}, the embedding \eqref{eq:besov_embed_sp_fixed}, 
and the embedding stated in Assumption \ref{ass:besov}. These are also valid 
in many other situations (sometimes under additional assumptions), 
e.g.\ for certain unbounded domains (in particular $\mathbb{R}^d$ and half-spaces, 
see \cite{Triebel2010}), certain Riemannian manifolds 
(see \cite[Chapter 7]{Triebel1992}) as well as Besov spaces with other 
boundary conditions (see \cite[Chapter 4]{Triebel1978}). 
\end{remark}
\begin{example}[Hilbert spaces] 
For $p=q=u=2$ the regularization scheme \eqref{eq:Tik_det_besov} becomes classical Tikhonov regularization with $W^r_2(\Omega)=\Bspace r22$ penalty. Here we obtain 
\[ \|\fh-f\|_{L^2} = \mathcal{O}(\rho^{\frac{a}{s+a}}\delta^{\frac{s}{s+a}})
\qquad \mbox{if}\qquad \|f\|_{B^s_{2,\infty}} \leq \rho, \quad s\in (0,r).\]
Due to $W^s_2(\Omega)=B^s_{2,2}(\Omega)\subset B^s_{2,\infty}(\Omega)$ this reproduces the results in \cite{natterer:84} and \cite{HM:18}.
\end{example} 

\subsection{Sparsity promoting wavelet regularization}\label{sec:Besov_wavelet}
In the following we explain how regularization by wavelet penalization and in particular weighted $\ell^1$-regularization of wavelet coefficients is contained in our setup. 
The latter is often used since it leads to sparse estimators in the sense that 
only a finite (and often small) number of wavelet coefficients of $\fh$ do not vanish. 

We introduce the scale of Besov sequence spaces $\bspace spq$ that allows to characterize Besov function spaces $\Bspace spq$ by decay properties of coefficients in wavelet expansions (see also \cite[Def.~2.6]{triebel:08}). Let $c_\Lambda, C_\lambda>0$ and $(\Lambda_j)_{j\in\nat}$ be a family of finite sets such that 
\begin{align*}
c_\Lambda 2^{jd}\leq |\Lambda_j|\leq C_\Lambda 2^{jd} \quad \text{ for all } j\in\nat.
\end{align*}
We consider the index set \[ \Lambda:= \{(j,k) \colon j\in\mathbb{N}_0, k\in\Lambda_j\}.\] For a sequence $x=\left(x_{j,k}\right)_{(j,k)\in \Lambda}$ and a fixed $j\in \nat$ we denote by $x_j:= (x_{j,k})_{k\in \Lambda_j}\in \mathbb{R}^{\Lambda_j}$ the projection onto the $j$-th level.
For $s\in \mathbb{R}$ and $p,q \in [1,\infty]$ let us introduce
\[
\bspace spq:=\left\{x\in\mathbb{R}^\Lambda \colon \bn xspq<\infty\right\}\quad \text{with}\quad
\bn xspq:=\left\|
\left(2^{js}2^{jd\left(\frac{1}{2}-\frac{1}{p}\right)} 
\|x_j\|_p\right)_{j\in\mathbb{N}_0}\right\|_{q}. \]
Suppose $(\psi_\lambda)_{\lambda\in \Lambda}$ is a wavelet system on $\Omega$ such that the wavelet synthesis operator
\[ \wav \colon\bspace rpq \rightarrow \Bspace rpq \quad\text{ given by } (\wav x)(r) =\sum_{\lambda\in \Lambda} x_\lambda \psi_\lambda(r) \text{ for } r\in \Omega  \] 
is a norm isomorphism for $r,p,q$ the parameters involved in $\XR=\Bspace rpq$. 
In this case we use the norm $\Bn {f} rpq:= \bn {\wav\inv f} rpq$ in \eqref{eq:Tik_det_besov}. 
By transformation rules of $\argmin$ under composition with a bijective mapping,  
the estimators in \eqref{eq:Tik_det_besov} can then be rewritten in the form
\begin{align*}\label{eq:Tik_onehomo_wavelet}
S_\alpha(g) = \wav \argmin_{x\in \wav\inv (\tilde{D}_F)} \infbracket{ \frac{1}{2\alpha } \|g - F(\wav x)\|_{\Y}^2 +  \frac{1}{u} \bn xrpq^u }.
\end{align*}
This is the more common implementation of wavelet penalization methods. 
If there exists a wavelet analysis operator 
\[ \mathcal{A}\colon \Bspace rpq \rightarrow \bspace rpq \quad\text{satisfying }   \bn {\mathcal{A} \cdot }rpq \sim \Bn \cdot rpq.\] 
then $\|f\|_\XR := \bn {\mathcal{A}\cdot  } rpq$ is equivalent to $\Bn {\cdot }rpq$, 
and may be used as penalty term in the framework of Corollary \ref{cor:over_besov}. 

\begin{example}[$p=2$] \label{rem:over_2}
In the case $p=2$ we have $\left(\Bspace {-a}22, \Bspace r2q \right)_{\theta_s,\infty}=\Bspace s2\infty$ (see \eqref{eq:besov_interp_p_fixed}), and  Corollary \ref{cor:over_besov} shows that 
\begin{align}\label{eq:p2rate}
\|\fh-f\|_{L^2} = \mathcal{O}(\rho^{\frac{a}{s+a}}\delta^{\frac{s}{s+a}})
\qquad \mbox{if}\qquad \|f\|_{B^s_{2,\infty}} \leq \rho, \quad s\in (0,r).
\end{align}
The same convergence rate has been obtained for non-oversmoothing Besov 
wavelet penalization in \cite{WSH:20} for $q=u\geq 2$ and $s\in (0, \frac{a}{q-1}]$  
and in \cite{HM:19} for $q=u=1$ and $s\in (0,\infty)$ (for infinitely smooth wavelets). 
In \cite{WSH:20} it was shown that this rate is of optimal order.\\
As a reference example we discuss rates for piecewise smooth univariate functions with jumps. 
As shown in \cite[Ex.~30]{MH:20} such functions belong to $B^s_{p,\infty}$ if and only if 
$s\leq \frac{1}{p}$ and to $B^s_{p,q}$ with $q<\infty$ if and only if 
$s<\frac{1}{p}$. Hence, in our setting we have $s=\frac{1}{2}$ in \eqref{eq:p2rate}. 
\end{example}

\begin{example}[$p=q=1$]\label{ex:p1}
Note that for $u=p=q=1$ we obtain a weighted $\ell^1$-penalty. The largest 
smoothness class  
$ \left(\Bospace {-a}22(\Omega), \Bospace r11(\Omega) \right)_{\theta_s,\infty}=\mathcal{S}(k_s)$ 
was characterized in \cite{MH:20} as image of a weighted Lorentz sequence space $k_s$, 
 and a converse result was derived for this class. As this is not a Besov space, 
 we will work with the slightly smaller space 
$ \left(\Bospace {-a}22(\Omega), \Bospace r11(\Omega) \right)_{\theta_s,p_s}
=\Bospace {s}{p_s}{p_s}(\Omega) = W^s_{p_s}(\Omega)$ with 
$p_s = \frac{2a+2r}{2a+r+s}\in (1,\frac{2a+2r}{2a+r})$ for simplicity (see \eqref{eq:besov_interp_p_eq_q}, Prop.~\ref{app:relation_lp_sob}). 
Hence, Corollary \ref{cor:over_besov} implies that 
\begin{align}
\|\fh-f\|_{L^{\overline{p}}} = \mathcal{O}(\varrho^{\frac{a}{s+a}}\delta^{\frac{s}{s+a}})
\qquad \mbox{if}\qquad \|f\|_{W^s_{p_s}} \sim  
\|f\|_{B^s_{p_s,p_s}} \leq \rho, \quad s\in (0,r)
\end{align}
for $\overline{p} = \frac{2a+2r}{2a+r}$. This 
reproves results that were derived in \cite{MH:20} using hard-thresholding approximations 
of the true solution. \\
For piecewise smooth functions with jumps the condition $s<\frac{1}{p_s}$ is 
equivalent to $s<\frac{2a+r}{2a+2r-1}$, and the right hand side is always larger than 
$\frac{1}{2}$. Therefore, we obtain a faster rate for $p=1$ than for $p=2$ although 
only in the $L^{\overline{p}}$- rather than the $L^2$-norm. 
\end{example}

\begin{example}[$p<1$]\label{ex:pl1} 
For $p=q=u<1$ we obtain a weighted $\ell^p$-penalty. In analogy to 
Example~\ref{ex:p1} we use the smoothness class
$ \left(\Bospace {-a}22(\Omega), \Bospace rpp(\Omega) \right)_{\theta_s,\tilde{p}_s}
=\Bospace {s}{\tilde{p}_s}{\tilde{p}_s}(\Omega) = W^s_{\tilde{p}_s}(\Omega)$ with 
$\tilde{p}_s = \frac{2p(a+r)}{2a+pr+(2-p)s}\in (p,\frac{2p(a+r)}{2a+pr})$ and 
find that 
\begin{align}
\|\fh-f\|_{L^{\overline{p}}} = \mathcal{O}(\rho^{\frac{a}{s+a}}\delta^{\frac{s}{s+a}})
\qquad \mbox{if}\qquad \|f\|_{W^s_{\tilde{p}_s}} \sim  
\|f\|_{B^s_{\tilde{p}_s,\tilde{p}_s}} \leq \rho, \quad s\in (0,r)
\end{align}
for $\overline{p} = \frac{2p(a+r)}{2a+pr}$.\\
For piecewise smooth functions with jumps the condition $s<\frac{1}{\tilde{p}_s}$ is 
equivalent to $s<\frac{2a+r}{p(2a+2r+1)-2}$ (where the denominator is positive 
due to the first part of Assumption \ref{ass:besov}). Hence choosing $p<1$ 
rather than $p=1$ pays off 
in the sense that we obtain an even higher rate of convergence, but also in an 
even weaker norm.  
\end{example}

\begin{remark}['Does oversmoothing harm?']
\label{rem:over_hurt}
To conclude this section we point out a difference in the previous three examples. 
For $p=2$ our convergence rate analysis yields the same convergence rate $\mathcal{O}(\delta^\frac{s}{s+a})$ measured in the same norm, the $L^2$-norm, under the same smoothness condition given by $\Bspace s2\infty$ as in the case $r=0$. 
Hence the paradigm 'oversmoothing does not harm' known for Hilbert-space regularization remains true for $\Bspace r2q$ Banach space penalties with $q<2$. \\
In contrast, in Examples \ref{ex:p1} and \ref{ex:pl1}, a higher value of $r$ may cause an assignment of a lower smoothness $s$ to a fixed true solution. 
On the other hand, the error is than measured in a stronger norm. 
This indicates that the integrability index in the loss function norm 
may have an influence on the convergence rate. 
It calls for the development of a convergence rate theory that is more flexible in the choice of the loss function and allows for norms which cannot be sharply bounded 
by powers of the norms of the spaces $\X_-$ and $\XR$ in 
Assumption~\ref{ass:oversmooth} via interpolations. 
\end{remark}

\section{Bounded variation regularization}\label{sec:bv}
This section contains an application of Theorem \ref{thm:rate_abstract} to Tikhonov regularization with penalty term given by the $\mathrm{BV}$-norm. Let $d\in \mathbb{N}$ and $\Omega\subset \mathbb{R}^d$ a bounded Lipschitz domain. A function $f\in L^1(\Omega)$ has \emph{bounded variation} if 
\begin{align*}
|f|_{\mathrm{BV}(\Omega)}:=\sup\left\{\int_{\Omega}	f(x)\, \mathrm{div} g(x) \,\mathrm{d}x  \colon g\in C^1_c(\Omega,\mathbb{R}^d), \, \|g\|_{L^\infty(\Omega,\mathbb{R}^d)}\leq 1 \right\} <\infty.
\end{align*}
Here $\|g\|_{L^\infty(\Omega,\mathbb{R}^d)}:=\left\|\left(\sum_{j=1}^d g_i^2\right)^{1/2} \right\|_{L^\infty(\Omega)}$ with $g=(g_1,\ldots g_n)$. Then \[\BV(\Omega):=\{ f\in L^1(\Omega)\colon |f|_{\BV (\Omega)}<\infty\}\] is a Banach space equipped with $\|\cdot \|_{\mathrm{BV}(\Omega)}:= \|\cdot \|_{L^1(\Omega)}+ | \cdot|_{\BV(\Omega)}$.
We refer to \cite{ambrosio2000functions} for a detailed study of spaces of bounded variation.\\ 
For $a\geq 0$ with $a\geq \frac{d}{2}-1$ there is a continuous embedding $\BV (\Omega)\subset \Bspace {-a}22$ (see Proposition \ref{app:bv_embed}).   
In this section we will use the following assumption on the forward operator. 
\begin{assumption}\label{ass:bv} 
Let $a\geq 0$ with $a\geq \frac{d}{2}-1$. Let  
 $\tilde{D}_F\subset \BV (\Omega)$, $\Y$ be a Banach space and $F\colon \tilde{D}_F \rightarrow \Y$ be a map satisfying 
\begin{align*}
\frac{1}{M_1} \Bn {f_1-f_2}{-a}22 \leq \|F(f_1)- F(f_2) \|_\Y  \leq M_2 \Bn {f_1-f_2}{-a}22\quad\text{ for all } f_1,f_2 \in \tilde{D}_F.
\end{align*}
for constants $M_1, M_2 >0.$
\end{assumption} \\
For $g\in \Y$ we consider 
\begin{align}\label{eq:Tik_BV}
S_\alpha (g) = \argmin_{h \in \tilde{D}_F} \, \infbracket{ \frac{1}{2 \alpha} \|g- F(h)  \|_{\Y}^2  + \| h\|_{\mathrm{BV}} }.
\end{align}
We refer to \cite{acar1994analysis} for this kind of regularization scheme for linear operators including a proof of existence of minimizers and to \cite{del2019multiscale} for a treatment of similar estimators in a statistical setting.  \\
Let $a\geq 0$ and $s\in (-a,1)$.
The following interpolation identity, based on the result by Cohen et al. in \cite[Thm.~ 1.4]{cohen2003harmonic}, is a crucial ingredient for our convergence rates result 
\begin{align}\label{eq:BV_interpolation}
\Bspace s{t_s}{t_s} = \left( \Bspace {-a}22, \BV(\Omega) \right)_{\theta_s,t_s} \quad\text{ with }  \theta_s:= \frac{s+a}{a+1} \text { and }  t_s:= \frac{2a+2}{s+2a+1}
\end{align}
with equivalent norms. In the latter reference the authors show this identity for $\Omega=\mathbb{\mathbb{R}}^d$ and from there we conclude the statement in Proposition \ref{app:bv_interpolation}. \\
To avoid the abstract smoothness condition in  Theorem \ref{thm:rate_abstract} we state our theorem under a slightly stronger smoothness assumption and comment on the weaker condition in a remark afterwards. Again, we do not state bounds on the bias for the sake of brevity. 

\begin{corollary}[Convergence rates for $\mathrm{BV}$-regularization]\label{cor:bv}
Suppose Assumption \ref{ass:bv} holds true, and the true solution $f$ has smoothness 
\[ f\in \Bspace s{t_s}{t_s}\quad\text{ with } \Bn f s{t_s}{t_s}<\varrho\]     
for some $0<s<1$ and $\varrho>0$ or 
\[ f\in \BV(\Omega)\quad\text{ with } \|f\|_{\BV} <\varrho.
\]     
In the latter case we set $s=1$. 
Set $\overline{p}=\frac{2a+2}{2a+1}$ and suppose that the closure $D_F$ of $\tilde{D}_F$ in $\Bspace {-a}22$ contains an $L^{\overline{p}}(\Omega)$-ball with radius $\tau$ around $f.$ 
Suppose that $\gobs\in \Y$ satisfies \eqref{eq:det_noise_level} for $0<\delta <\varrho^{-\frac{a}{s}} \tau^\frac{s+a}{s}$ and let $\fh \in S_\alpha(\gobs)$ for some 
$\alpha>0$. 
Let $0<c_l \leq c_r$ and $1 < c_{\mathrm{D}} \leq C_{\mathrm{D}}$. 
Then there is a constant $C_r$ independent of $f, \gobs ,\varrho$, $\tau$ and $\delta$ such that either of the conditions
\[  c_l \varrho^{-\frac{a+1}{s+a}} \delta^\frac{s+2a+1}{s+a} \leq \alpha\leq c_r \varrho^{-\frac{a+1}{s+a}} \delta^\frac{s+2a+1}{s+a} \quad\text{and}\quad  c_{\mathrm{D}} \delta \leq \| \gobs - F(\fh)\|_\Y \leq C_{\mathrm{D}} \delta \] 
on the choice of $\alpha$ implies the following bounds: 
\begin{align*}
\Bn {f-\fh} {-a}22  & \leq C_r \delta\\ 
\| f-\fh\|_{L^{\overline{p}}(\Omega)} & \leq C_r \varrho^\frac{a}{s+a} \delta^\frac{s}{s+a}\\ 
\|\fh \|_{\mathrm{BV}(\Omega)} &  \leq C_r \varrho^\frac{a+1}{s+a} \delta^\frac{s-1}{s+a}
\end{align*}
\end{corollary}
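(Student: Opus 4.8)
The plan is to derive Corollary \ref{cor:bv} as a direct application of Theorem \ref{thm:rate_abstract}, exactly in the spirit of the proof of Corollary \ref{cor:over_besov}. First I would set up the abstract spaces: take $\XR = \BV(\Omega)$, $\X_- = \Bspace{-a}22$, and $\XL = L^{\overline p}(\Omega)$ with $\overline p = \frac{2a+2}{2a+1} = t_1$, and put $\xi := \theta_{s}|_{s=0} = \frac{a}{a+1}$, i.e.\ the interpolation parameter for which $t_s$ equals $\overline p$ and the smoothness index is $0$. Assumption \ref{ass:bv} gives the two-sided Lipschitz condition required in Assumption \ref{ass:oversmooth}, so the only nontrivial point is the continuous embedding $(\X_-,\XR)_{\xi,1}\subset \XL$. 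For $a=0$ one has $\xi=0$, $\overline p = 2$, and $\X_- = \XL = L^2(\Omega)$, so Assumption \ref{ass:oversmooth} holds trivially; for $a>0$ I would use the interpolation identity \eqref{eq:BV_interpolation} at $s=0$, which reads $L^{\overline p}(\Omega) = \Bspace 0{\overline p}{\overline p} = (\Bspace{-a}22,\BV(\Omega))_{\xi,\overline p}$, together with the elementary embedding $(\X_-,\XR)_{\xi,1}\subset(\X_-,\XR)_{\xi,\overline p}$ (valid since $\overline p\ge 1$; see \cite[Thm.~3.4.1(b)]{Bergh1976}). This establishes Assumption \ref{ass:oversmooth} with the stated value of $\xi$.

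Next I would translate the smoothness hypothesis into the hypothesis of Theorem \ref{thm:rate_abstract}. In the case $s\in(0,1)$, the interpolation identity \eqref{eq:BV_interpolation} yields $\Bspace s{t_s}{t_s} = (\Bspace{-a}22,\BV(\Omega))_{\theta_s,t_s}$ with $\theta_s = \frac{s+a}{a+1}$, and since $t_s<\infty$ the fine-index monotonicity $(\X_-,\XR)_{\theta_s,t_s}\subset(\X_-,\XR)_{\theta_s,\infty}$ gives $f\in(\X_-,\XR)_{\theta_s,\infty}$ with a norm bound $\le c\varrho$, absorbing the norm-equivalence constant into the constant. In the case $s=1$ one has $f\in\BV(\Omega)=\XR$, which embeds into $(\X_-,\XR)_{1,\infty}$ by Proposition \ref{app:interpolation_limiting}, again with $\|f\|_{(\X_-,\XR)_{1,\infty}}\le c\varrho$. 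In either case $\theta = \theta_s = \frac{s+a}{a+1}\in(\xi,1]$, matching the range required in Theorem \ref{thm:rate_abstract}.

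Then the three error bounds follow by substituting $\theta = \frac{s+a}{a+1}$, $\xi = \frac{a}{a+1}$, and $u = 1$ (the $\BV$-penalty in \eqref{eq:Tik_BV} has exponent $u=1$) into parts (b) and (c) of Theorem \ref{thm:rate_abstract} and simplifying the exponents. One checks: $\theta - \xi = \frac{s}{a+1}$, $\theta - 1 = \frac{s-1}{a+1}$, so $\frac{\xi}{\theta} = \frac{a}{s+a}$, $\frac{\theta-\xi}{\theta} = \frac{s}{s+a}$, $\frac{\theta-1}{\theta} = \frac{s-1}{s+a}$; for the parameter choice, $\frac{u}{\theta} = \frac{a+1}{s+a}$ and $\frac{(1-\theta)u+2\theta}{\theta} = \frac{(1-\theta)+2\theta}{\theta} = \frac{1+\theta}{\theta} = \frac{s+2a+1}{s+a}$; and the admissibility bound $\delta < \varrho^{-\xi/(\theta-\xi)}\tau^{\theta/(\theta-\xi)} = \varrho^{-a/s}\tau^{(s+a)/s}$ is exactly the one in the statement. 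One also has to note that the ball around $f$ in the hypothesis of Theorem \ref{thm:rate_abstract} must be an $\XL = L^{\overline p}$-ball, which is precisely what is assumed in the corollary. Since the discrepancy-principle part of Theorem \ref{thm:rate_abstract} does not provide a bound on $\|f-\fh\|_{\XA}$, I would derive that $\|f-\fh\|_{\Bspace{-a}22}\le C_r\delta$ only under the a priori choice; but in fact the discrepancy-principle case also yields $\|\gobs - F(\fh)\|_\Y\le C_{\mathrm D}\delta$, hence $\|F(f)-F(\fh)\|_\Y\le (1+C_{\mathrm D})\delta$, and the lower Lipschitz bound from Assumption \ref{ass:bv} gives $\|f-\fh\|_{\Bspace{-a}22}\le M_1(1+C_{\mathrm D})\delta$, so the first bound holds for both rules.

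The main obstacle, such as it is, is purely bookkeeping: one must be careful that the interpolation identity \eqref{eq:BV_interpolation} is being applied on the bounded Lipschitz domain $\Omega$ (justified via Proposition \ref{app:bv_interpolation}) and at both $s\in(0,1)$ and the boundary value $s=0$ needed for the loss-space embedding, and that the hypothesis $a\ge\frac d2-1$ is exactly what guarantees the embedding $\BV(\Omega)\subset\Bspace{-a}22$ underlying Assumption \ref{ass:bv} (Proposition \ref{app:bv_embed}). There is no genuine analytic difficulty beyond what is already packaged in Theorem \ref{thm:rate_abstract} and the appendices; the proof is essentially a specialization of the Besov corollary with $t_s$ in place of $p_s$ and the $\BV$ endpoint replacing the high-smoothness Besov endpoint.
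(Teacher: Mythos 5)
Your proposal is correct and follows essentially the same route as the paper's proof: verifying Assumption \ref{ass:oversmooth} via the interpolation identity \eqref{eq:BV_interpolation} together with the embeddings from \cite[Thm.~3.4.1(b)]{Bergh1976} and Proposition \ref{app:relation_lp_sob}, and then applying Theorem \ref{thm:rate_abstract} with $\theta_s=\frac{s+a}{a+1}$, $\xi=\frac{a}{a+1}$, and $u=1$. Two small points: $\Bspace 0{\overline p}{\overline p}\subset L^{\overline p}(\Omega)$ is a continuous embedding rather than an identity (harmless, since only the inclusion is needed), and your observation that part (c) of Theorem \ref{thm:rate_abstract} does not by itself deliver the $\XA$-bound --- which you repair using the discrepancy condition together with the lower Lipschitz inequality --- is a clarification the paper's proof leaves implicit.
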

\begin{proof}
We show that Assumption \ref{ass:oversmooth} is satisfied with $\XR=\BV(\Omega)$, $\X_-=\Bspace {-a}22$, $\XL=L^{\overline{p}}(\Omega)$ and $u=1$. Due to Proposition \ref{app:bv_embed} we have a continuous embedding $\BV(\Omega)\subset \Bspace {-a}22$. 
If $a=0$, then we have $\overline{p}=2$ and $\Bspace 022=L^2(\Omega)$ (see Proposition \ref{app:relation_lp_sob}). Hence we have Assumption \ref{ass:oversmooth} with $\xi=1$ in this case and the result follows from Theorem \ref{thm:rate_abstract}.\\
If $a>0$, we set
 $\xi:=\frac{a}{a+1}$. Note that $1<\overline{p}=t_0< 2$. Hence \cite[Thm.~3.4.1.(b)]{Bergh1976}, \eqref{eq:BV_interpolation} and Proposition \ref{app:relation_lp_sob} yield the following chain of continuous embeddings 
\begin{align}\label{eq:bv_loss_interpolation}
\left( \Bspace {-a}22, \BV(\Omega) \right)_{\xi,1} \subset \left( \Bspace {-a}22, \BV(\Omega) \right)_{\xi,\overline{p}}= \Bspace 0{\overline{p}}{\overline{p}} \subset L^{\overline{p}}(\mathbb{R}^d). 
\end{align}
Finally, \cite[Thm.~3.4.1.(b)]{Bergh1976} and \eqref{eq:BV_interpolation} yields
\[ \Bspace s{t_s}{t_s} = \left( \Bspace {-a}22, \BV(\Omega) \right)_{\theta_s,t_s} \subset \left( \Bspace {-a}22, \BV(\Omega) \right)_{\theta_s,\infty}.\]
Hence the smoothness condition on $f$ in the claim implies the smoothness condition in Theorem \ref{thm:rate_abstract}. Therefore, the stated result follows from Theorem \ref{thm:rate_abstract}.
\end{proof} 
\begin{remark}[Weaker smoothness condition]\label{cor:bv_smoothness_ass}
The statements in Corollary \ref{cor:bv} remain true if the smoothness assumption on $f$ is replaced by $f\in \left( \Bspace {-a}22, \BV(\Omega)\right)_{\theta_s,\infty}$ with a bound by $\varrho$ on the norm of $f$ therein. 
\end{remark}

\begin{remark}[Similarity to $\Bspace 111$-regularization]
We see that the convergence rates and also the smoothness condition for $\mathrm{BV}$-regularization equals the ones for $\Bspace 111$-regularization in Corollary \ref{cor:over_besov}. The reason for that is that the interpolation identity in \eqref{eq:BV_interpolation} holds true with $\mathrm{BV}(\Omega)$ replaced by  $\Bspace 111$. \\
Whereas for $\Bspace 111$-regularization with a norm given by wavelet coefficients we also have a convergence rate result in the non oversmoothing case $s>r$ (see \cite{MH:20}) a similar result remains open for $\mathrm{BV}$-regularization.
\end{remark}

\section{White noise}\label{sec:white_noise}
In this section we extend the tools developed in the previous sections to derive 
convergence rates for oversmoothing regularization with stochastic noise models. 

In this section we will assume that $\Omega_{\Y}\subset \mathbb{R}^d$ is 
a bounded Lipschitz domain and $\Y = L^2(\Omega_{\Y})$. We consider noise models 
of the form
\begin{align}\label{eq:white_noise_model}
\gobs = F(f) + \sigma Z,\qquad Z\in B^{-d/2}_{p',\infty}(\Omega_{\Y})
\end{align}
with a normalized noise process $Z$ and a noise level $\sigma>0$. 
Moreover, $p\in (1,\infty]$ 
is the same as in \Cref{sec:besov}, and $\frac{1}{p'}+\frac{1}{p}=1$. 
The choice of the Besov space is motivated by the fact that Gaussian white noise 
belongs to $B^{-d/2}_{p',\infty}$ almost surely (see \cite{veraar:11} for the 
$d$-dimensional torus), and to no smaller Besov spaces. 
Also point processes (i.e.\ random finite sums of 
delta-peaks) belong to $B^{-d/2}_{p',\infty}$ for $p\geq 2$ as well as 
local averages of noise processes over a finite number of detector areas. 
We will derive error bounds in terms of Besov norms of $Z$. 
The expectation of $Z$ does not necessarily have to vanish, i.e.\ $Z$ may also 
contain deterministic error components. However, to derive error bounds in 
expectation we will have to assume that the norm of $Z$ has finite moments: 
\begin{align}\label{eq:finite_moments}
\mathbb{E}\left[\|Z\|_{B^{-d/2}_{p',\infty}}^\kappa \right]<\infty\qquad \mbox{for all }
\kappa\in \mathbb{N}
\end{align}
This easily follows from much stronger large deviation inequalities (see, e.g., the 
proof of \cite[Cor.~6.5]{HM:19}), which have been shown for Gaussian white noise 
in \cite[Cor.~3.7]{veraar:11} or \cite[remark after Thm. 4.4.3]{GN:15}. 
For other noise processes 
the verification of \eqref{eq:finite_moments} may require further investigations. 

Since the Tikhonov functional in \eqref{eq:Tik_det} is not well defined 
in our setting, we formally subtract 
$\frac{1}{2}\|\gobs\|_{\Y}^2$ from $\frac{1}{2}\|\gobs-g\|_{\Y}^2$ to obtain 
the new data fidelity functional 
$\mathcal{S}_{\gobs}(g):=\frac{1}{2}\|g\|_{\Y}^2 -\langle \gobs,g\rangle$ and 
Tikhonov regularization of the form 
\begin{align}\label{eq:Tikh_white_noise}
T_\alpha(\gobs):= \argmin_{h\in \tilde{D}_F}\infbracket{\frac{1}{\alpha} 
\mathcal{S}_{\gobs}(F(h)) + \frac{1}{u}\|h\|^u_{\X_{\mathcal{R}}}}.
\end{align}
with $u\in (0,\infty)$.
Note that for $\gobs\in \Y$ we have $T_{\alpha}(\gobs) = S_{\alpha}(\gobs)$, 
but $T_{\alpha}(\gobs)$ is also well defined for white noise. More precisely, 
in the setting of the following Theorem \ref{thm:white_noise_besov}, the existence of minimizers 
in \eqref{eq:Tikh_white_noise} can be shown by the same argument as in the 
non-oversmoothing case (see \cite[Prop. 6.3]{HM:19}).

\subsection{Convergence rates}
We first study Besov penalties with $p>1$. 

\begin{theorem}[Stochastic rates for oversmoothing Besov space regularization]\label{thm:white_noise_besov}
Let  $1<p\leq 2$, $1\leq q \leq p$ and $r>0$. 
Let the data $\gobs$ be described by \eqref{eq:white_noise_model}, consider Tikhonov 
regularization in the form \eqref{eq:Tikh_white_noise}, and assume that 
$\|\cdot\|_{\XR}$ in  \eqref{eq:Tikh_white_noise} is equivalent to $\Bn \cdot rpq$.
Suppose the true solution $f\in \tilde{D}_F$ has regularity $s\in (0,r]$ with norm bound $\varrho>0$ 
in the sense of \eqref{eq:besov_regularity} in Corollary \ref{cor:over_besov} 
or \eqref{eq:over_besov_smoothness} in Remark \ref{rem:over_besov_smoothness}. 
In addition to Assumption \ref{ass:besov} suppose that $F$ satisfies the one-sided Lipschitz condition 
\begin{align}\label{eq:Lipschitz}
\|F(f_1)- F(f_2)\|_{\Bospace {a+r}pq (\Omega_{\Y})}   
\leq \widetilde{M}_2 \|f_1-f_2\|_{\Bospace {r}pq (\Omega)}
\quad\text{ for all } f_1,f_2 \in \tilde{D}_F
\end{align} 
and that $\frac{d}{2}<a+r$. 
Let $\overline{p}:=\frac{2p(a+r)}{2a+pr}$ and assume that the closure $D_F$ of $\tilde{D}_F$ in $\Bspace {-a}22$ contains a $\Bspace 0{\overline{p}}{\overline{p}}$-ball with radius $\tau>0$ around $f$. 

Then there is a a-priori parameter choice rule $\alpha=\alpha(\sigma,\varrho)$  (specified in \eqref{eq:choise_rule_white_noise})
such that there exists a constant 
$C_r>0$ such that the reconstruction error with $\fh\in T_\alpha(\gobs)$ satisfies the bounds
\begin{subequations}\label{eq:besov_bounds_white_noise}
\begin{align}
\Bn {f-\fh} {-a}22  & \leq C_r (1+N^\eta)\rho^{\frac{d/2}{s+a+d/2}}\sigma^{\frac{s+a}{s+a+d/2}}  \\ 
\left\| f-\fh\right\|_{L^{\overline{p}}} & \leq C_r (1+N^\eta)\varrho^{\frac{a+d/2}{s+a+d/2}}\sigma^{\frac{s}{s+a+d/2}} \\ 
\left\|\fh\right\|_{\XR}&  \leq C_r (1+N^\eta)\varrho^{\frac{r+a+d/2}{s+a+d/2}}\sigma^{\frac{s-r}{s+a+d/2}}
\end{align}
\end{subequations}
for all $0< \sigma <\varrho^{-\frac{a+d/2}{s}}\tau^\frac{s+a+d/2}{s} $ 
with $N:=\|Z\|_{B^{-d/2}_{p',\infty}}$,   
and  $\eta:=\frac{(a+r)u}{(a+r+d/2)u/2-d/2}$. In particular, 
if \eqref{eq:finite_moments} holds true, then 
\begin{align}\label{eq:exp_rate}
\mathbb{E}\left[\left\| f-\fh\right\|_{L^{\overline{p}}}^\kappa\right]^{\frac{1}{\kappa}} 
= \mathcal{O}\left(\varrho^{\frac{a+d/2}{s+a+d/2}}\sigma^{\frac{s}{s+a+d/2}}\right)
\qquad \mbox{as } \sigma \to 0 \quad
\mbox{for all }\kappa\geq 1. 
\end{align}
\end{theorem}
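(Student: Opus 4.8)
The plan is to mimic the proof of Theorem~\ref{thm:rate_abstract}(b), replacing the deterministic noise bound $\delta$ by a stochastic quantity built from $N=\|Z\|_{B^{-d/2}_{p',\infty}}$ and the one-sided Lipschitz condition \eqref{eq:Lipschitz}. First I would fix the abstract data: set $\XR=\Bspace rpq$, $\X_-=\Bspace{-a}22$, $\xi=\frac a{a+r}$ and $\XL=\Bspace 0{\overline p}{\overline p}$, so that Assumption~\ref{ass:oversmooth} holds exactly as in the proof of Corollary~\ref{cor:over_besov}, and extract the smooth approximations $(f_t)_{t>0}$ from Lemma~\ref{lem:aux_elements}. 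The crucial new point is how to control the cross term $\langle \sigma Z, F(h)-F(f_t)\rangle$ that appears when one inserts $f_t$ into the modified Tikhonov functional $\mathcal S_{\gobs}$: by the duality pairing of $B^{-d/2}_{p',\infty}$ with $\Bospace{d/2}{p}{1}\subset\Bospace{a+r}{p}{q}$ (this is where $\frac d2<a+r$ enters) together with \eqref{eq:Lipschitz}, one bounds $|\langle Z, F(h)-F(f_t)\rangle|\leq c\,N\,\|h-f_t\|_{\Bospace rpq}\leq c\,N\,(\|h\|_\XR+\|f_t\|_\XR)$. Thus, after the standard completion-of-the-square trick $\mathcal S_{\gobs}(g)-\mathcal S_{\gobs}(g') = \tfrac12\|g-g'\|^2 - \langle\gobs,g-g'\rangle$ applied between $F(\fh)$ and $F(f_t)$, the noise enters only through the product $\sigma N\,\|\fh\|_\XR$, which can be absorbed into the penalty term at the cost of a factor involving $\sigma N$ raised to a power.

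The second step is to carry out this absorption and optimize over $t$. Plugging $f_t$ into \eqref{eq:Tikh_white_noise} and using \eqref{eq:aux_image}, \eqref{eq:aux_penalty} and the two-sided Lipschitz condition gives, schematically,
\[
\frac1{2\alpha}\|F(\fh)-F(f)\|_\Y^2 + \frac1u\|\fh\|_\XR^u
\;\leq\; \frac{c}{\alpha}\varrho^2 t^{2\theta_s}
  + \frac{c}{\alpha}\sigma^2 N^2
  + \frac{c\,\sigma N}{\alpha}\|\fh\|_\XR
  + \frac cu\varrho^u t^{(\theta_s-1)u}.
\]
The mixed term $\tfrac{c\sigma N}{\alpha}\|\fh\|_\XR$ is handled by Young's inequality with exponents $u$ and $u'$: it is bounded by $\tfrac1{2u}\|\fh\|_\XR^u + c(\sigma N/\alpha)^{u'}$, so that half the penalty survives on the left. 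One then chooses $t\sim(\sigma/\varrho)^{1/(s+a+d/2)}$-type scaling and $\alpha$ according to the rule \eqref{eq:choise_rule_white_noise} (which will be of the form $\alpha\sim\varrho^{-\beta_1}\sigma^{\beta_2}$ modified by a power of $1+N$), balancing the four terms; the bookkeeping is identical in spirit to parts (a)--(b) of Theorem~\ref{thm:rate_abstract} but with the extra exponent $u'=u/(u-1)$ producing the exponent $\eta=\frac{(a+r)u}{(a+r+d/2)u/2-d/2}$ on $N$. From the resulting bound on $\|F(\fh)-F(f)\|_\Y$ one gets the $\X_-$-estimate via $M_1$, from the bound on $\|\fh\|_\XR$ together with $\|f_t\|_\XR$ and the quasi-triangle inequality in $\XR$ one gets the $\XR$-estimate, and the interpolation inequality $\|\cdot\|_\XL\leq c\|\cdot\|_{\X_-}^{1-\xi}\|\cdot\|_\XR^{\xi}$ (Lemma~\ref{lem:intrerpolation_inequality}) combined with \eqref{eq:aux_loss} yields the $L^{\overline p}$-estimate, giving \eqref{eq:besov_bounds_white_noise}. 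Here I must also check the admissibility condition $f_t\in D_F$: as in the deterministic proof, \eqref{eq:aux_loss} forces $\|f-f_t\|_\XL<\tau$ precisely when $\sigma<\varrho^{-(a+d/2)/s}\tau^{(s+a+d/2)/s}$, which is the stated range.

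Finally, for the moment bound \eqref{eq:exp_rate} I would simply take the $\XL$-bound in \eqref{eq:besov_bounds_white_noise}, raise it to the power $\kappa$, take expectations, and use that $1+N^\eta$ has finite moments of every order by \eqref{eq:finite_moments} together with the elementary inequality $(a+b)^\kappa\leq 2^{\kappa-1}(a^\kappa+b^\kappa)$; the $\sigma$- and $\varrho$-dependent prefactor is deterministic and pulls out, giving the claimed $\mathcal O(\varrho^{(a+d/2)/(s+a+d/2)}\sigma^{s/(s+a+d/2)})$ rate. I expect the main obstacle to be the first step: correctly identifying the dual space of $B^{-d/2}_{p',\infty}(\Omega_\Y)$ used to estimate the noise pairing and verifying the embedding $\Bospace{d/2}{p}{1}\subset\Bospace{a+r}{p}{q}$ needs $a+r>d/2$ (and, for $q>1$, a fine-index embedding), and then tracking how the exponent $u'$ from Young's inequality propagates through the optimization to produce exactly the stated $\eta$ — the rest is a careful but routine repetition of the deterministic argument.
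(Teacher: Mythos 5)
Your overall architecture (smooth approximations from Lemma \ref{lem:aux_elements}, completion of the square in $\mathcal{S}_{\gobs}$, absorption of the cross terms, then the deterministic bookkeeping) matches the paper's proof. However, your treatment of the noise pairing contains a genuine gap that destroys the rate. You bound $\frac{1}{\alpha}\langle\sigma Z,\gh-g_t\rangle \le c\,\frac{\sigma N}{\alpha}\|f_t-\fh\|_{\XR}$ by pushing the whole dual norm $\|\gh-g_t\|_{\Bospace{d/2}{p}{1}}$ through the embedding $\Bospace{a+r}{p}{q}\subset \Bospace{d/2}{p}{1}$ and \eqref{eq:Lipschitz}, and then absorb it with a two-term Young inequality with exponents $u,u'$. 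This is too crude: in the oversmoothing regime $\|f_t-\fh\|_{\XR}\sim\varrho^{\frac{a+r+d/2}{s+a+d/2}}\sigma^{\frac{s-r}{s+a+d/2}}$ blows up as $\sigma\to0$, so your remainder $(\sigma N/\alpha)^{u'}$ exceeds the other terms by a positive power of $\sigma$ at the choice \eqref{eq:choise_rule_white_noise} (e.g.\ for $u=2$, $a=r=d=1$, $s=1/2$ one gets $(\sigma/\alpha)^{2}=\sigma^{-2}$ against the needed $\sigma^{-1/2}$), and re-tuning $\alpha$ to balance your noise term against the approximation error yields a strictly suboptimal rate ($\sigma^{3/7}$ instead of $\sigma^{3/4}$ in $\XA$ for that example). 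Moreover, for $u=1$ one has $u'=\infty$ and the absorption collapses entirely. A telltale sign is that the stated exponent $\eta=\frac{(a+r)u}{(a+r+d/2)u/2-d/2}$ depends on $a,r,d$ and therefore cannot arise from a Young inequality with exponents $u,u'$ alone.

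The missing idea is the interpolation \eqref{eq:interpolation_data_white_noise}: one writes $\|g_t-\gh\|_{\Bospace{d/2}{p}{1}}\le c\,\|g_t-\gh\|_{\Bospace{0}{p}{2}}^{1-\frac{d}{2a+2r}}\|g_t-\gh\|_{\Bospace{a+r}{p}{q}}^{\frac{d}{2a+2r}}$, controls the first factor by $\|g_t-\gh\|_{\Y}$ (which the data fidelity term dominates) and only the second, small-exponent factor by $\|f_t-\fh\|_{\XR}$ via \eqref{eq:Lipschitz}. A three-factor Young inequality $xyz\le\frac1\eta x^\eta+\frac1\mu y^\mu+\frac1\nu z^\nu$ with $\mu=\frac{2a+2r}{a+r-d/2}$ and $\nu=\frac{u(2a+2r)}{d}$ then absorbs both norm factors into the left-hand side of the Tikhonov comparison and leaves exactly $c\bigl(\sigma N\alpha^{-\frac{a+r+d/2}{2a+2r}}\bigr)^\eta$, which balances with the approximation error under \eqref{eq:choise_rule_white_noise}. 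Your duality step, the use of $a+r>d/2$, the admissibility check $f_t\in D_F$, and the passage to moments via \eqref{eq:finite_moments} are all fine; but without the interpolation split the argument does not yield the stated bounds.
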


\begin{proof}
As in \Cref{sec:besov} we set $\X_- = B^{-a}_{2,2}(\Omega)$. If $a>0$ then we have 
$(\X_-,\XR)_{\frac{a}{a+r},\overline{p}}
\subset B^0_{\overline{p},\overline{p}}(\Omega) \subset L^{\overline{p}}(\Omega)=\XL$ 
with continuous embeddings due to $p\leq 2$ 
and $q\leq p$  (see \eqref{eq:over_besov_loss_inter} and Remark \ref{rem:over_Lp_loss}). If $a=0$, then $\X_-=\XL=L^2(\Omega)$.
We choose \[ {t=C_L^{-\frac{a+r}{s}}(\sigma / \varrho)^\frac{a+r}{s+a+d/2}},
\]  
and from Lemma \ref{lem:aux_elements} with $\theta = \frac{a+s}{a+r}$ we obtain  
\begin{align}\label{eq:xl_white_noise}
 \|f-f_t\|_{\XL} \leq \varrho^\frac{a+d/2}{s+a+d/2} \sigma^\frac{s}{s+a+d/2} <\tau.
 \end{align}
 Hence $f_t\in D_F$, and by definition $\fh\in T_{\alpha}(\gobs)$ implies 
\[
\frac{1}{\alpha}\mathcal{S}_{\gobs}(\gh) + \frac{1}{u}\|\fh\|_{_{\X_{\mathcal{R}}}}^u
\leq \frac{1}{\alpha}\mathcal{S}_{\gobs}(g_t) + \frac{1}{u}\|f_t\|_{_{\X_{\mathcal{R}}}}^u
\] 
with $g_t:= F(f_t)$ and $\gh:=F(\fh)$. 
Adding 
$\frac{1}{2\alpha}\|g_t-\gh\|_{\Y}^2- \frac{1}{\alpha}\mathcal{S}_{\gobs}(\gh)$ to this equation 
yields
\begin{align}
&\frac{1}{2\alpha} \|g_t-\gh\|_{\Y}^2 + \frac{1}{u}\|\fh\|^u_{\X_{\mathcal{R}}}
\nonumber\\
\label{eq:aux_white_noise}
&\leq  \frac{1}{2\alpha} \|g_t-\gh\|_{\Y}^2+ \frac{1}{\alpha}\mathcal{S}_{\gobs}(g_t)  
-\frac{1}{\alpha}\mathcal{S}_{\gobs}(\gh)
+ \frac{1}{u}\|f_t\|_{_{\X_{\mathcal{R}}}}^u \\
&= \frac{1}{\alpha}\left\langle \sigma Z,\gh-g_t\right\rangle 
+ \frac{1}{\alpha}\left\langle F(f)-g_t,\gh-g_t\right\rangle 
+ \frac{1}{u}\|f_t\|_{_{\X_{\mathcal{R}}}}^u.\nonumber 
\end{align}  
The first term on the left hand side is estimated using the Besov space interpolation 
\begin{align}\label{eq:interpolation_data_white_noise}
\left(B^0_{p,2}(\Omega_{\Y}), B^{a+r}_{p,q}(\Omega_{\Y})\right)_{d/(2a+2r),1} =  
B^{d/2}_{p,1}(\Omega_{\Y}),
\end{align}
the Lipschitz condition \eqref{eq:Lipschitz},  and the continuity of the embedding 
$B^0_{2,2}(\Omega_{\Y}) = \Y \hookrightarrow B^0_{p,2}(\Omega_\Y) $:
\begin{align}\label{eq:Z_estimate} \nonumber
\frac{1}{\alpha}\left\langle \sigma Z,\gh-g_t\right\rangle
&\leq \frac{1}{\alpha}\|\sigma Z\|_{B^{-d/2}_{p',\infty}} \left\|g_t-\gh\right\|_{B_{p,1}^{d/2}}\\ \nonumber
&\leq  c_1\frac{\sigma N}{\alpha}
\left\|g_t-\gh\right\|_{B^0_{p,2}}^{1-\frac{d}{2a+2r}}
\left\|g_t-\gh\right\|_{B_{p,q}^{a+r}}^{\frac{d}{2a+2r}}\\
&\leq c_2 \frac{\sigma N}{\alpha} 
\left\|g_t-\gh\right\|_{\Y}^{1-\frac{d}{2a+2r}}
\left\|f_t-\fh\right\|_{\XR}^{\frac{d}{2a+2r}}\\  \nonumber
&= \left( c_2  \sigma N \alpha^{-\frac{a+r+d/2}{2a+2r}} \right) \left( \frac{1}{\alpha}{\left\|g_t-\gh\right\|_{\Y}^2} \right)^\frac{a+r-d/2}{2a+2r} \left(\left\|f_t-\fh\right\|_{\XR}^u \right)^{\frac{d}{u(2a+2r)}} 
\end{align}
with $c_2$ depending on $c_1$, the embedding constant, $\widetilde{M}_2$, the constant in the equivalence of $\Bn {\cdot} rpq$ and $\|\cdot \|_{\XR}$. 
Now Young's inequality $xyz \leq \frac{1}{\theta}x^\theta +\frac{1}{\mu}y^{\mu}+ \frac{1}{\nu} z^\nu$ for $ \frac{1}{\theta}+\frac{1}{\mu}+\frac{1}{\nu}=1 $ with $\eta=\frac{u(2a+2r)}{(a+r+d/2)u-d}$, $\mu:=\frac{2a+2r}{a+r-d/2}$ and $\nu:=\frac{u(2a+2r)}{d}$ and the elementary inequality $(x+y)^u\leq 2^{u-1}(x^u+y^u)$ yield
\begin{align*}
\frac{1}{\alpha}\left\langle \sigma Z,\gh-g_t\right\rangle
&\leq  c_3  \left( \sigma N \alpha^{-\frac{a+r+d/2}{2a+2r}} \right)^\eta + \frac{1}{8\alpha}\left\|g_t-\gh\right\|_{\Y}^2+ \frac{1}{2u} 2^{1-u} 
\left\|f_t-\fh\right\|_{\XR}^u\\
&\leq  c_3  \left( \sigma N \alpha^{-\frac{a+r+d/2}{2a+2r}} \right)^\eta + \frac{1}{8\alpha}\left\|g_t-\gh\right\|_{\Y}^2+ \frac{1}{2u} 
\|\fh\|_{\XR}^u + \frac{1}{2u} 
\left\|f_t\right\|_{\XR}^u
\end{align*}
with a constant $c_3$ that depends on $c_2, u,\eta, \mu$ and  $\nu$.
The second and third summand on the right hand side can be absorbed in the left hand side of 
\eqref{eq:aux_white_noise}. 
The second term on the right hand side \eqref{eq:aux_white_noise} is estimated by 
\begin{align*}
\frac{1}{\alpha}\left\langle F(f)-g_t,\gh-g_t\right\rangle
&\leq \frac{1}{\alpha}\|F(f)-g_t\|_{\Y} \|\gh-g_t\|_{\Y} \\
&\leq \frac{M_2}{\alpha} \|f-f_t\|_{\X_-} \|\gh-g_t\|_{\Y}\\
&\leq \frac{4M_2^2}{\alpha}\|f-f_t\|_{\X_-}^2 + 
\frac{1}{8\alpha}  \|\gh-g_t\|_{\Y}^2,
\end{align*}
and the second term can be absorbed in the left hand side of \eqref{eq:aux_white_noise}. 
Altogether we have shown that 
\begin{align}
\frac{1}{4\alpha} \|g_t-\gh\|_{\Y}^2 + \frac{1}{2u}\|\fh\|^u_{\X_{\mathcal{R}}}
&\leq c_3  \left( \sigma N \alpha^{-\frac{a+r+d/2}{2a+2r}} \right)^\eta
+ \frac{3}{2u} \|f_t\|_{\XR}^u + \frac{4M_2^2}{\alpha}\|f-f_t\|_{\X_-}^2 \nonumber\\
\label{eq:white_noise_aux2}
&\leq c_3  \left( \sigma N \alpha^{-\frac{a+r+d/2}{2a+2r}} \right)^\eta
+ c_4 \varrho^\frac{2u(a+r)}{(2-u)s+2a+ur} \alpha^\frac{u(s-r)}{(2-u)s+2a+ur}\\
&\leq (c_3+c_4) (1+N^\eta) \varrho^\frac{u(a+r+d/2)}{s+a+d/2}  \sigma^\frac{u(s-r)}{s+a+d/2} \nonumber
\end{align} 
using Lemma \ref{lem:aux_elements}, the choice of  $t$ and the parameter choice rule 
\begin{align}\label{eq:choise_rule_white_noise}
 \alpha=c_\alpha \varrho^{-\frac{u(a+r+d/2)-d}{s+a+d/2}} \sigma ^{\frac{(2-u)s+2a+ur}{s+a+d/2}} 
\end{align}
for $\alpha$ with a constant $c_\alpha$. Here the constant $c_4$ depends on $M_2$, $C_L$, $u$, $a$, $s$, $r$ and $c_\alpha$.   \\
This shows on the one hand that 
\begin{align*}
\left\|f_t -\fh\right\|_{\X_-} 
\leq  M_1\left\|g_t-\gh\right\|_{\Y} 
\leq c_5   (1+N^\eta) \varrho^{\frac{d/2}{s+a+d/2}}\sigma^{\frac{s+a}{s+a+d/2}}.
%
\end{align*}
with $c_5$ depending on  $M_1$, $c_3$ and $c_4$.  
where we use the choice of $\alpha$ once again. This finishes the proof if $a=0$. 
On the other hand, \eqref{eq:white_noise_aux2} and Lemma \ref{lem:aux_elements} 
implies 
\begin{align*}
\|f_t -\fh\|_{\X_R} 
\leq \|f_t\|_{\X_R} + \|\fh\|_{\X_R} \leq c_6 (1+N^\eta) \varrho^\frac{a+r+d/2}{s+a+d/2}  \sigma^\frac{s-r}{s+a+d/2} 
\end{align*}
with $c_6$ depending on $C_L$,$u$, $c_3$, $c_4$.
Putting both estimates together and using the interpolation and embedding results from 
the very beginning of this proof, we obtain 
\begin{align*}
\|f_t-\fh\|_{\XL} &\leq \|f_t-\fh\|_{\X_-}^{\frac{r}{a+r}}\|f_t-\fh\|_{\X_R}^{\frac{a}{a+r}}
\leq c_7 (1+N^\eta) 
\varrho^{\frac{a+d/2}{a+s+d/2}}\sigma^{\frac{s}{a+s+d/2}}
\end{align*} 
with $c_7$ depending on $c_5$ and $c_6.$
Together with \eqref{eq:xl_white_noise} we wind up with
\[
\|f-\fh\|_{\XL} \leq \|f-f_t\|_{\XL}+ \|f_t-\fh\|_{\XL} 
\leq (c_7+1)(1+N^\eta) \varrho^{\frac{a+d/2}{a+s+d/2}}\sigma^{\frac{s}{s+a+d/2}}.
\]
\end{proof}


In our noise model \eqref{eq:white_noise_model} we excluded the case $p=1$, i.e.\ $p'=\infty$, 
since $Z\notin B^{-d/2}_{\infty,\infty}$ almost surely (see \cite{veraar:11}). 
However, the interesting case $p=q=1$ can be treated if we impose an additional one-sided 
Lipschitz condition on $F$:  

\newcommand{\newp}{p}

\begin{theorem}[Stochastic rates for oversmoothing regularization with BV or $B^r_{1,1}$ penalties]\label{thm:white_noise_besov1}
For data $\gobs$ described by \eqref{eq:white_noise_model} with 
$p$ defined below, 
consider Tikhonov regularization of the form \eqref{eq:Tikh_white_noise} with either $\|\cdot\|_\XR=\|\cdot\|_{\BV}$ or $\|\cdot\|_\XR=\|\cdot\|_{B^r_{1,1}}$ for some 
$r>\max(0,\frac{d}{2}-a)$. For $\BV$ we set $r=1$ and assume that 
$a>\frac{d}{2}-1$. 
Suppose the true solution has regularity 
\[ f\in \Bspace s{t_s}{t_s}\quad\text{ with } \Bn f s{t_s}{t_s}<\varrho\] 
for $s\in (0,r]$ and $t_s = \frac{2a+2r}{s+2a+r}$.  
For $\|\cdot\|_\XR=\|\cdot\|_{\BV}$ and $s=1$ we assume 
$f\in \BV(\Omega)$ and $\|f\|_{\BV}\leq \rho$. 
In addition to Assumption \ref{ass:bv} or \ref{ass:besov}, respectively, 
suppose that there exists $e\in (0,a+r-d/2)$ such that $F$ satisfies the one-sided  Lipschitz condition 
\begin{align}\label{eq:Lipschitz_1}
\|F(f_1)- F(f_2)\|_{\Bospace {a+r-e}{\newp}{\newp} (\Omega_\Y)}   
\leq \widetilde{M}_2 
\|f_1-f_2\|_{\Bospace {r-e}{\newp}{\newp}(\Omega)}\quad\text{ for all } f_1,f_2 \in \tilde{D}_F
\end{align} 
with $\newp:=\frac{a+r}{a+r-e/2}$. Let $\overline{p}:=\frac{2a+2r}{2a+r}$ and assume that the closure $D_F$ of $\tilde{D}_F$ in $\Bspace {-a}22$ contains a $\Bspace 0{\overline{p}}{\overline{p}}$-ball with radius $\tau>0$ around $f$. 

Then there is a a-priori parameter choice rule $\alpha=\alpha(\sigma,\varrho)$  (specified in \eqref{eq:choise_rule_white_noise})
such that there exists a constant 
$C_r>0$ such that the reconstruction error with $\fh\in T_\alpha(\gobs)$ satisfies all three bounds in \eqref{eq:besov_bounds_white_noise} for all $\sigma$ as in Theorem \ref{thm:white_noise_besov}
with $N:=\|Z\|_{B^{-d/2}_{\newp',\infty}}$ and $\eta$ as in Theorem \ref{thm:white_noise_besov}. 
Under the assumption \eqref{eq:finite_moments} we also have \eqref{eq:exp_rate}. 
\end{theorem}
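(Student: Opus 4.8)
The plan is to follow the proof of Theorem~\ref{thm:white_noise_besov} essentially line by line; the only new ingredient is a substitute for the duality step \eqref{eq:Z_estimate}, which at $p=1$ would require dualising $Z$ against $B^{d/2}_{1,1}$, while $Z\notin B^{-d/2}_{\infty,\infty}$. Instead one works with the auxiliary index $\newp=\frac{a+r}{a+r-e/2}$, which lies in $(1,2]$ (the bound $\newp>1$ uses $e>0$, the bound $\newp\le 2$ uses $e\le a+r$), so that $\newp'<\infty$ and $Z\in B^{-d/2}_{\newp',\infty}(\Omega_{\Y})$ is a sensible hypothesis, e.g.\ for Gaussian white noise.

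First I would set $\X_-=\Bspace{-a}22$, $\XR\in\{\BV(\Omega),\Bspace r11\}$, $\XL=L^{\overline p}(\Omega)$ with $\overline p=\frac{2a+2r}{2a+r}$, and verify Assumption~\ref{ass:oversmooth}: the two-sided Lipschitz bound is Assumption~\ref{ass:bv}/\ref{ass:besov}; the loss embedding $(\X_-,\XR)_{\frac a{a+r},1}\subset L^{\overline p}(\Omega)$ is \eqref{eq:bv_loss_interpolation} (for $\BV$) resp.\ \eqref{eq:over_besov_loss_inter} together with Remark~\ref{rem:over_Lp_loss} specialised to $p=q=1$; and $f\in\Bspace s{t_s}{t_s}$ implies $f\in(\X_-,\XR)_{\theta_s,\infty}$ with $\theta_s=\frac{s+a}{a+r}$ and the same $\varrho$, by \eqref{eq:BV_interpolation} resp.\ \eqref{eq:besov_interp_p_eq_q} followed by the trivial embedding into the $\infty$ fine index. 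The degenerate case $a=0$ (which forces $d=1$ for $\BV$) reduces to $\X_-=\XL=L^2$ as in Theorem~\ref{thm:white_noise_besov}. With $t=C_L^{-(a+r)/s}(\sigma/\varrho)^{(a+r)/(s+a+d/2)}$ and $\theta=\theta_s$, Lemma~\ref{lem:aux_elements} gives $\|f-f_t\|_{\XL}\le\varrho^{\frac{a+d/2}{s+a+d/2}}\sigma^{\frac s{s+a+d/2}}<\tau$, so $f_t\in D_F$, and exactly as in the derivation of \eqref{eq:aux_white_noise} the minimality of $\fh\in T_\alpha(\gobs)$ gives
\[
\tfrac1{2\alpha}\|g_t-\gh\|_{\Y}^2+\tfrac1u\|\fh\|_{\XR}^u\ \le\ \tfrac1\alpha\langle\sigma Z,\gh-g_t\rangle+\tfrac1\alpha\langle F(f)-g_t,\gh-g_t\rangle+\tfrac1u\|f_t\|_{\XR}^u ,
\]
with $g_t=F(f_t)$, $\gh=F(\fh)$.

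The heart of the argument is the noise term. Dualising $B^{-d/2}_{\newp',\infty}(\Omega_{\Y})$ against $B^{d/2}_{\newp,1}(\Omega_{\Y})$, then applying the interpolation identity $\big(B^0_{\newp,2}(\Omega_{\Y}),\Bospace{a+r-e}{\newp}{\newp}(\Omega_{\Y})\big)_{\vartheta,1}=B^{d/2}_{\newp,1}(\Omega_{\Y})$ with $\vartheta=\frac{d/2}{a+r-e}\in(0,1)$ (the analogue of \eqref{eq:interpolation_data_white_noise}; here $e<a+r-d/2$ is exactly what makes $\vartheta<1$), the embedding $\Y=B^0_{2,2}(\Omega_{\Y})\hookrightarrow B^0_{\newp,2}(\Omega_{\Y})$ (valid for $\newp\le2$), the one-sided Lipschitz condition \eqref{eq:Lipschitz_1}, the interpolation inequality $\|\cdot\|_{\Bospace{r-e}{\newp}{\newp}}\le c\,\|\cdot\|_{\X_-}^{e/(a+r)}\|\cdot\|_{\XR}^{\,1-e/(a+r)}$ on $\XR$ — which follows from Lemma~\ref{lem:intrerpolation_inequality}, because $\newp$ is chosen precisely so that $(\X_-,\XR)_{1-e/(a+r),1}\subset(\X_-,\XR)_{1-e/(a+r),\newp}=\Bospace{r-e}{\newp}{\newp}$ by \eqref{eq:besov_interp_p_eq_q} (resp.\ \eqref{eq:BV_interpolation} for $\BV$, with $r=1$) — and finally $\|f_t-\fh\|_{\X_-}\le M_1\|g_t-\gh\|_{\Y}$, one checks that all $e$-dependent exponents cancel, since $\vartheta\cdot(1-e/(a+r))=\frac{d/2}{a+r}$, leaving
\[
\tfrac1\alpha\langle\sigma Z,\gh-g_t\rangle\ \le\ c\,\tfrac{\sigma N}{\alpha}\,\|g_t-\gh\|_{\Y}^{\,1-\frac d{2a+2r}}\,\|f_t-\fh\|_{\XR}^{\,\frac d{2a+2r}},\qquad N:=\|Z\|_{B^{-d/2}_{\newp',\infty}} ,
\]
which is verbatim the bound reached after \eqref{eq:Z_estimate} in the proof of Theorem~\ref{thm:white_noise_besov}.

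From here the proof is word for word that of Theorem~\ref{thm:white_noise_besov}: Young's inequality with the three exponents $\eta,\mu,\nu$ absorbs the $\tfrac1\alpha\|g_t-\gh\|_{\Y}^2$ term and, via $(x+y)^u\le2^{u-1}(x^u+y^u)$, the $\|\fh\|_{\XR}^u$ term; the term $\tfrac1\alpha\langle F(f)-g_t,\gh-g_t\rangle$ is handled by Cauchy--Schwarz, the Lipschitz bound and Young; the a-priori choice \eqref{eq:choise_rule_white_noise} then yields the three bounds \eqref{eq:besov_bounds_white_noise} with the \emph{same} $\eta$ (the cancelled exponents being $e$-independent), and \eqref{eq:exp_rate} follows from \eqref{eq:finite_moments} as before. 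The main obstacle — the only place where genuinely new work is needed — is the bookkeeping in the previous paragraph: verifying that $\newp=\frac{a+r}{a+r-e/2}$ really identifies $\Bospace{r-e}{\newp}{\newp}$ with a real-interpolation space between $\X_-$ and $\XR$, that $\XR$ (hence $f_t-\fh$) embeds into $\Bospace{r-e}{\newp}{\newp}$ so that \eqref{eq:Lipschitz_1} is applicable, and that the two interpolation parameters $\vartheta$ and $1-e/(a+r)$ combine to restore exactly the exponent $1-\frac d{2a+2r}$ on $\|g_t-\gh\|_{\Y}$ independently of $e$; once this is in place, nothing else in the proof changes.
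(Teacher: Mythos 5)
Your proposal is correct and follows essentially the same route as the paper's proof: keep the argument of Theorem~\ref{thm:white_noise_besov} verbatim and replace only the noise estimate \eqref{eq:Z_estimate}, dualising against $B^{d/2}_{\newp,1}(\Omega_\Y)$, interpolating between $B^0_{\newp,2}(\Omega_\Y)$ and $B^{a+r-e}_{\newp,\newp}(\Omega_\Y)$ with parameter $\frac{d/2}{a+r-e}$, and then trading $\|f_t-\fh\|_{B^{r-e}_{\newp,\newp}}$ for $\|g_t-\gh\|_\Y^{e/(a+r)}\|f_t-\fh\|_{\XR}^{(a+r-e)/(a+r)}$ via the identity $B^{r-e}_{\newp,\newp}(\Omega)=(\Bspace{-a}22,\XR)_{\frac{a+r-e}{a+r},\newp}$, so that the $e$-dependent exponents cancel to $1-\frac{d}{2a+2r}$ and $\frac{d}{2a+2r}$ exactly as in \eqref{eq:Z_estimate}. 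Your exponent bookkeeping and the verification that $\newp=\frac{a+r}{a+r-e/2}$ is precisely the integrability index produced by \eqref{eq:besov_interp_p_eq_q} resp.\ \eqref{eq:BV_interpolation} match the paper's computation.
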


\begin{proof}
The proof follows along the lines of the proof of Theorem \ref{thm:white_noise_besov}, 
we just have to replace \eqref{eq:Z_estimate} as follows: Note that $1<\newp<2$. 
The starting point is 
\begin{align}\label{eq:Zestimate1_start}
\frac{1}{\alpha}\left\langle \sigma Z,\gh-g_t\right\rangle
&\leq \frac{1}{\alpha}\|\sigma Z\|_{B^{-d/2}_{\newp',\infty}} \left\|g_t-\gh\right\|_{B_{\newp,1}^{d/2}}
= \frac{\sigma N}{\alpha} \left\|g_t-\gh\right\|_{B_{\newp,1}^{d/2}}
\end{align}
We replace \eqref{eq:interpolation_data_white_noise} by
\begin{align*}
\left(B^0_{\newp,2}(\Omega_{\Y}), B^{a+r-e}_{\newp,\newp}(\Omega_{\Y})\right)_{d/(2a+2r-2e),1} 
=  B^{d/2}_{\newp,1}(\Omega_{\Y}),
\end{align*}
and use the continuity of the embedding $B^{0}_{2,2}(\Omega_{\Y})=\Y\hookrightarrow  B^{0}_{t,2}(\Omega_{\Y})$ and \eqref{eq:Lipschitz_1} to obtain 
\begin{align}\label{eq:inter_ineq_white_noise1}
\nonumber
\left\|g_t-\gh\right\|_{B^{d/2}_{\newp,1}} 
&\leq c_1 \left\|g_t-\gh\right\|_{B^0_{\newp,2}}^{1-\frac{d}{2a+2r-2e}}    \left\|g_t-\gh\right\|_{B^{a+r-e}_{\newp,\newp}} ^\frac{d}{2a+2r-2e} \\ 
&\leq c_2 \left\|g_t-\gh\right\|_{\Y}^{1-\frac{d}{2a+2r-2e}}\left\|f_t-\fh\right\|_{B^{r-e}_{\newp,\newp}}^\frac{d}{2a+2r-2e} 
\end{align}
with $c_2$ depending on $c_1$, the embedding constant and $\tilde{M}_2$.
To estimate the second factor on the right hand side we use the interpolation identity 
\begin{align}\label{eq:mixed_inter_white_noise1}
B^{r-e}_{\newp,\newp}(\Omega)= \left( B^{-a}_{2,2}(\Omega) , 
\XR 
\right)_{\frac{a+r-e}{a+r},\newp}
\end{align}
(note that $-a<r-e$), which follows from 
Proposition \ref{app:bv_interpolation} or \cite[2.4.3]{Triebel2010}, respectively. 
Together with Assumption \ref{ass:bv} resp. \ref{ass:besov} we obtain 
\begin{align*}
\left\|f_t-\fh\right\|_{B^{r-e}_{\newp,\newp}} &  \leq c_3 \left\|f_t-\fh\right\|_{B^{-a}_{2,2}}^\frac{e}{a+r} \left\|f_t-\fh\right\|_{\XR}^\frac{a+r-e}{a+r}  \\ 
& \leq c_4 \left\|g_t-\gh\right\|_{\Y}^\frac{e}{a+r} \left\|f_t-\fh\right\|_{\XR}^\frac{a+r-e}{a+r}
\end{align*} 
with $c_4$ depending on $c_3$ and $M_1$. 
Inserting into \eqref{eq:inter_ineq_white_noise1} and then into \eqref{eq:Zestimate1_start}  yields the inequality 
\[ \frac{1}{\alpha}\left\langle \sigma Z,\gh-g_t\right\rangle \leq c_5 \frac{\sigma N}{\alpha} \left\|g_t-\gh\right\|_{\Y}^{1-\frac{d}{2a+2r}} \left\|f_t-\fh\right\|_{\XR}^\frac{d}{2a+2r},  \] 
which replaces \eqref{eq:Z_estimate}. Here $c_5$ depends on $c_4$ and $c_2$. 
The rest of the proof can be copied from the proof of \Cref{thm:white_noise_besov}.
\end{proof}

\begin{remark}[minimax optimality]
It can be shown as in \cite[Prop. 6.6]{HM:19} that the error bound in \eqref{eq:exp_rate} 
is optimal in a minimax sense. 
\end{remark}

\begin{remark}[duality]
In view of the fact that the dual of Besov spaces $\Bspace {-s}{p'}{q'}$ for 
$s\in \mathbb{R}$, $p,q\in (1,\infty)$ on a smooth, bounded domains $\Omega$ 
is given by the spaces 
$\tildeB spq :=\{f\in \Bospace spq (\mathbb{R}^d):\supp f\subset \overline{\Omega}\}$ (see \cite[Thm.~4.8.2]{Triebel1978}), it may appear more natural to impose the assumptions 
\eqref{eq:Lipschitz} and \eqref{eq:Lipschitz_1} in these spaces. 
(Note that if $\Omega_{\Y}=\Omega$, $F$ is linear and self-adjoint on $L^2(\Omega)$ 
with a bijective continuous extension to $\Bospace {-a}22(\Omega)\to L^2(\Omega)$ 
such that Assumption \ref{ass:besov} holds true, then 
$F:L^2(\Omega)\to \tildeB a22$ is also bijective by duality.) 
However, the spaces $\tildeB spq $ are closed subspaces of $\Bospace spq(\Omega)$, 
which can be written as nullspaces of certain trace operators, except for smoothness
indices $s$ with $s-\frac{1}{p}\in \mathbb{N}_0$ at which the number of well-defined traces changes (see \cite[Thms.~4.3.2/1 and 4.7.1]{Triebel1978}). 
Therefore, the given formulations of \eqref{eq:Lipschitz} and \eqref{eq:Lipschitz_1} are more general, and boundary conditions can be incorporated 
in the domain $\tilde{D}_F$ of $F$.  
\end{remark}

\begin{remark}[special case $f\in \BV$]
Theorem \ref{thm:white_noise_besov1} for 
$\|\cdot\|_{\XR}=\|\cdot\|_{\BV}$ and $f\in \BV$ improves 
the rate in \cite{dAM:20} obtained for a different estimator 
by eliminating logarithmic factors in the noise level. Furthermore, we do not need to assume the existence of a wavelet-vaguelette decomposition of the forward operator. 
\end{remark}

\begin{remark}[implications for regression]
Our setting includes the case $F=I$ corresponding to regression 
problems. We discuss two particular cases:
\begin{itemize}
\item If we choose Besov wavelet norms with $p=q=1$ as in \Cref{sec:Besov_wavelet}, 
then the minimization of the Tikhonov functional splits into a family of minimization 
problems for each wavelet coefficients resulting in 
\emph{soft thresholding} or \emph {wavelet shrinkage} estimators with level-dependent 
threshold.  
Such estimators have been studied extensively in mathematical statisics 
(see, e.g., \cite{DJ:98}). 
\item For $\|\cdot\|_{\XR} =\|\cdot\|_{\BV}$ we obtain $\BV$-denoising. 
Here our assumption $a\geq \frac{d}{2}-1$ is only satisfied for $d=1$. 
In this case Theorem \ref{thm:white_noise_besov1} shows optimal 
$L^{\overline{p}}$-convergence rates of this estimator for functions with 
Besov smoothness $\leq 1$ (see also \cite{MvdG:97}). 
In higher dimensions convergence rates of 
a multiresolution estimator for $\BV$ functions were established in \cite{dALM:21}. 
\end{itemize}
\end{remark}
\subsection{Numerical experiments for a parameter identification problem}
We confirm the theoretical results in Theorems \ref{thm:white_noise_besov} and $\ref{thm:white_noise_besov1}$ by numerical experiments for the nonlinear identification of $c$ in the elliptic 
boundary value problem  
\begin{align}\label{eq:bvp_1d}
\begin{aligned}
&- u^{\prime\prime} + c u = \varphi &&\mbox{in }(0,1),\\
&u(0)=u(1)=1.
\end{aligned}.
\end{align}
The forward operator in the function space setting is 
$F(c):=u$ for the fixed smooth right hand side $\varphi$. For this problem the verification of Assumption \ref{ass:besov} with $a=2$ is discussed in \cite[Ex.~2.8, Lem.~2.9]{HM:19}.  
The experiments are carried out in the same setup as in \cite{MH:20} where more 
details on the implementation can be found.  
We added independent $N(0,\tilde{\sigma}^2)$-distributed random variables to $n=2^{10}$ equidistant measurement points as a discrete approximation of Gaussian white noise 
on $[0,1]$ with $\sigma = \frac{\tilde{\sigma}}{\sqrt{n}}$. 

The true coefficient $c^{\mathrm{jump}}$ is given by a piecewise smooth function with finitely many jumps. For each noise level $\tilde{\sigma}$ we drew $10$ data sets and took the average of the reconstruction errors.  

The regularization parameter $\alpha$ was chosen according to the rule \eqref{eq:choise_rule_white_noise} with $c_\alpha$ chosen optimally for medium 
value of $\tilde{\sigma}$. Of course, in practice $\alpha$ would have to be chosen 
in a completely data-driven manner, e.g.\ by the Lepski{\u\i} balancing principle, 
but this is not in the scope of this paper. 

\begin{example}[$p=2,q=1$]\label{ex:over_2}
First, we use as penalty the norm (with power $u=1$) on the Besov space $\Bospace 221((0,1))$ given by the $\bspace 221$-norm of wavelet coefficients with respect to Daubechies wavelets of order $7$. 
According to Remark \ref{rem:over_2}, smoothness of the solution $c^{\mathrm{jump}}$ 
is then measured in the scale $\Bospace {s}2\infty((0,1))$, and in this scale the maximal smoothness index of $c^{\mathrm{jump}}$ is $s=\frac{1}{2}$, i.e.\ 
$c^{\mathrm{jump}}\in \Bospace {1/2}2\infty ((0,1))$ (see \cite[Ex.30]{HM:19}).  
In Figure \ref{fig:plot} we see a good agreement of the reconstruction error in the numerical experiment with the predicted rate $\mathcal{O}(\sigma^{1/6})$ measured in the $L^2$-norm. 
\end{example}

\begin{example}[$p=q=1$]\label{ex:over_1}
Now we use the $\bspace 211$ norm on $db7$ wavelet coefficients norm as 
penalty term.
%
As $a=r=2$, we have $\overline{p}=\frac{4}{3}$. As in \cite{HM:19} one shows that 
$c^{\mathrm{jump}}$ belongs to $\Bospace {s}{t_s}{t_s}((0,1)) $ for $s< \frac{6}{7}$. 
Therefore, Corollary \ref{cor:over_besov} and Remark \ref{rem:over_Lp_loss} predict the rate $\mathcal{O}(\sigma^e)$ for all $e<\frac{12}{47}$ measured in the $L^\frac{4}{3}$-norm. In Figure \ref{fig:plot} we see a good agreement with the reconstruction error in the numerical experiment.
\end{example}
\begin{figure}[ht]
\includegraphics[width=\textwidth]{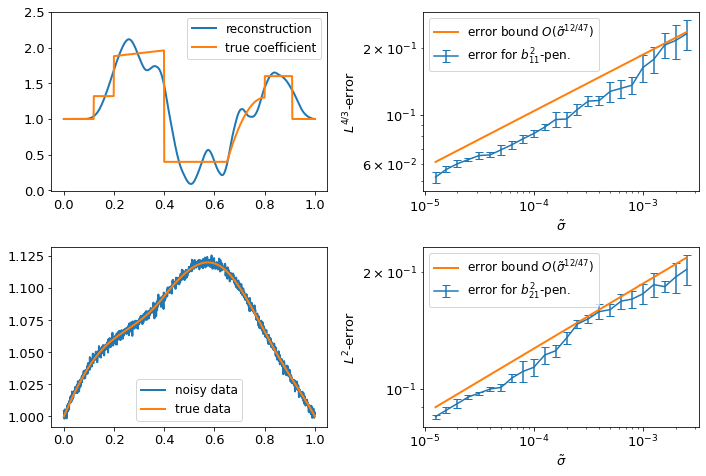} 
\caption{\emph{Left top:} the true coefficient $c^{\mathrm{jumps}}$ with jumps in the boundary value problem \eqref{eq:bvp_1d} together with the reconstruction for $\bspace 211$-penalization at noise level $\tilde{\sigma}=2.51\cdot 10^{-3}$. \emph{Left bottom:}
 Corresponding noisy data together with $F(c^{\mathrm{jumps}})$. \emph{Right top:} Averaged reconstruction error and its standard derivation using $\bspace 221$-penalization, the rate $\mathcal{O}(\tilde{\sigma}^{1/6})$ in the $L^2$-norm predicted by Theorem \ref{thm:white_noise_besov} (see Example \ref{ex:over_2}).
 \emph{Right bottom:} Reconstruction error using $\bspace 211$-penalization, the rate $\mathcal{O}(\tilde{\sigma}^{12/47})$ in the $L^{4/3}$-norm predicted by Theorem \ref{thm:white_noise_besov1} (see Example \ref{ex:over_1}).}
\label{fig:plot} 
\end{figure}

\section{Discussion and conclusions}
We end this paper by a summary of our results and a comparison to non-oversmoothing regularization theory. 
Until recently the oversmoothing case in variational regularization theory has been considered more difficult to analyze due to the failure of the tools developed for the non-oversmoothing case so far, which are usually based on some type of source condition. 
The analysis of this paper, inspired by a series of 
recent papers discussed in the introduction, suggests that on the contrary oversmoothing 
may be considered the easier case. 
The theory is now more complete in many respects than the theory of non-oversmoothing Banach space regularization as the following examples demonstrate: 
\begin{itemize}
\item For oversmoothing Banach space regularization, in contrast to the non-oversmoothing case, convergence rate results always remain valid if the norm in the penalty term 
is replaced by an equivalent norm. 
\item So far the analysis of non-oversmoothing Besov space penalization 
(see \cite{HM:19,Miller2021,MH:20,WSH:20}) is restricted 
to certain choices of the Besov norm indices $r$, $p$ and $q$ and the norm power $u$, 
whereas Corollary \ref{cor:over_besov} with the generalization in Remark \ref{rem:over_besov_smoothness} only assumes $r>0$.
\item We are not aware of a convergence rate analysis of BV regularization for the 
case that the solution belongs to a smoothness class which is smaller than $\BV$. 
(The case that the solution smoothness is exactly BV has been analyzed in \cite{dAM:20} 
in a statistical setting.) 
In contrast, Corollary \ref{cor:bv} provides optimal convergence rates for BV regularization if 
the solution only belongs to smoothness classes larger than $\BV$. 
\end{itemize}

On the other hand, an analysis of exponentially smoothing forward operators 
and other operators not satisfying a two-sided Lipschitz condition 
is still missing so far for oversmoothing Banach space regularization. 
Moreover, more flexibility in the choice of the loss function would be desirable 
both for the oversmoothing and the non-oversmoothing case, 
to allow for natural or desirable norms and for comparisons of different methods. 

\appendix 

\section{Tools from abstract interpolation theory}
We first characterize the second part of Assumption \ref{ass:oversmooth}:

\begin{proposition}[Interpolation inequality (see {\cite[Sec.~3.5, Thm.~3.11.4]{Bergh1976}})] \label{lem:intrerpolation_inequality}
Suppose $\XR$,$\XL$ and $\XA$ are quasi-Banach spaces with continuous embeddings $\XR\subset \XL\subset \XA$ and $\xi\in (0,1).$ Then the following statements are equivalent 
\begin{enumerate}
\item $\XL$ continuously embeds into $\left(\XA, \X \right)_{\xi,1}$. 
\item There exists a constant $c>0$ such that 
\[ \|f \|_\XL \leq c \| f\|_\XA^{1-\xi} \cdot \|f\|_\X^\xi \quad\text{for all } f \in \X. \]  
\end{enumerate}
\end{proposition}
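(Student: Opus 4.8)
The plan is to prove the equivalence by establishing the two implications separately; in both, the only structural input beyond the definitions is the elementary bound
\[
K(t,f)\ \le\ \min\bigl\{\,\|f\|_{\XA},\ t\,\|f\|_{\X}\,\bigr\}\qquad\text{for }f\in\X\text{ and }t>0,
\]
obtained by inserting $h=0$ and $h=f$ into the $K$-functional \eqref{eq:K_functional_embedded}. From there, one direction is a direct computation with the defining integral, and the other is a telescoping decomposition of the kind used in the $J$-method.

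First I would treat the implication from the embedding $\left(\XA,\X\right)_{\xi,1}\subset\XL$ to the interpolation inequality, which is the short one. Let $f\in\X$ with $f\ne 0$ (the case $f=0$ being trivial). Insert the elementary bound into the integral defining $\|f\|_{\left(\XA,\X\right)_{\xi,1}}$ and split it at $t_0:=\|f\|_{\XA}/\|f\|_{\X}$, using $K(t,f)\le t\|f\|_{\X}$ on $(0,t_0)$ and $K(t,f)\le\|f\|_{\XA}$ on $(t_0,\infty)$. The remaining integrals $\int_0^{t_0}t^{-\xi}\,\mathrm{d}t=t_0^{1-\xi}/(1-\xi)$ and $\int_{t_0}^\infty t^{-\xi-1}\,\mathrm{d}t=t_0^{-\xi}/\xi$ converge precisely because $0<\xi<1$, and substituting $t_0$ gives $\|f\|_{\left(\XA,\X\right)_{\xi,1}}\le\bigl(\tfrac1{1-\xi}+\tfrac1\xi\bigr)\|f\|_{\XA}^{1-\xi}\|f\|_{\X}^{\xi}$; multiplying by the embedding constant then yields the interpolation inequality.

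For the converse I would assume the interpolation inequality and, given $f\in\left(\XA,\X\right)_{\xi,1}$, construct a dyadic decomposition by near-optimal elements: for each $j\in\mathbb{Z}$ pick $h_j\in\X$ with $\|f-h_j\|_{\XA}+2^{j}\|h_j\|_{\X}\le 2K(2^{j},f)$. Since $f\in\left(\XA,\X\right)_{\xi,1}$ means $\int_0^\infty t^{-\xi-1}K(t,f)\,\mathrm{d}t<\infty$ and $K(\cdot,f)$ is nondecreasing with $\xi>0$, it follows that $K(2^{j},f)\to0$ as $j\to-\infty$, hence $h_j\to f$ in $\XA$; and $2^{-j}K(2^{j},f)\le 2^{-j}\|f\|_{\XA}\to0$ as $j\to+\infty$, hence $h_j\to0$ in $\X$, and a fortiori in $\XL$ and $\XA$. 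Telescoping then gives $f=\sum_{j\in\mathbb{Z}}u_j$ with convergence in $\XA$, where $u_j:=h_j-h_{j+1}\in\X$. The quasi-triangle inequalities in $\XA$ and $\X$ together with $K(2t,f)\le 2K(t,f)$ give $\|u_j\|_{\XA}\le c\,K(2^{j},f)$ and $\|u_j\|_{\X}\le c\,2^{-j}K(2^{j},f)$, so the interpolation inequality yields $\|u_j\|_{\XL}\le c'\,2^{-j\xi}K(2^{j},f)$. Summing over $j$ in $\XL$ and bounding $\sum_{j\in\mathbb{Z}}2^{-j\xi}K(2^{j},f)$ by a constant multiple of $\int_0^\infty t^{-\xi-1}K(t,f)\,\mathrm{d}t=\|f\|_{\left(\XA,\X\right)_{\xi,1}}$ (monotonicity of $K$ once more) gives $\|f\|_{\XL}\le C\|f\|_{\left(\XA,\X\right)_{\xi,1}}$, i.e.\ the embedding.

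The step I expect to require the most care is this final summation when $\XL$ is merely a quasi-Banach space. If $\XL$ is a Banach space it is just the triangle inequality: $\sum_j\|u_j\|_{\XL}\le c'\sum_j 2^{-j\xi}K(2^{j},f)<\infty$, so the series converges absolutely in $\XL$, and via the continuous embedding $\XL\subset\XA$ its sum is identified with $f$. For a genuinely quasi-Banach $\XL$ one would pass to an equivalent $r$-subadditive quasi-norm ($r\in(0,1]$, Aoki--Rolewicz) and estimate $\|f\|_{\XL}^{r}\le\sum_j\|u_j\|_{\XL}^{r}$, and the interplay between this $\ell^{r}$-sum and the $\ell^{1}$-type quantity $\|f\|_{\left(\XA,\X\right)_{\xi,1}}$ is precisely the subtle point; since every $\XL$ appearing later in the paper is a Banach space, the triangle-inequality version suffices for all the applications and I would not pursue the quasi-Banach refinement further.
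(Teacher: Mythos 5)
The paper does not actually prove this proposition; it is stated with a citation to Bergh--L\"ofstr\"om, so there is no internal proof to compare against. Your argument is a correct, self-contained version of the standard one: the direction ``embedding $\Rightarrow$ multiplicative inequality'' via splitting the defining integral at $t_0=\|f\|_{\XA}/\|f\|_{\X}$ using $K(t,f)\le\min\{\|f\|_{\XA},t\|f\|_{\X}\}$, and the converse via the discrete $J$-method decomposition $f=\sum_j(h_j-h_{j+1})$ with near-optimal $h_j$ at $t=2^j$. Both computations check out (the limits $h_j\to f$ in $\XA$ and $h_j\to 0$ in $\X$, the bounds $\|u_j\|_{\XA}\lesssim K(2^j,f)$ and $\|u_j\|_{\X}\lesssim 2^{-j}K(2^j,f)$, and the comparison of $\sum_j 2^{-j\xi}K(2^j,f)$ with the integral all follow from monotonicity and $K(2t,f)\le 2K(t,f)$ as you say). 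Two remarks. First, item (1) of the proposition as printed has the embedding the wrong way around relative to how the lemma is invoked in the proofs of Lemma \ref{lem:aux_elements} and Theorem \ref{thm:rate_abstract} (there one has $(\XA,\XR)_{\xi,1}\subset\XL$ and deduces the inequality); you silently read it the intended way, which is right. Second, your caveat about quasi-Banach $\XL$ is not mere caution: for an $r$-normed $\XL$ with $r<1$ the implication (2)$\Rightarrow$(1) genuinely fails in general (e.g.\ $\XA=\ell^2$, $\X=\ell^p$, $\XL=\ell^r$ with $p<r<2$: the multiplicative inequality holds by log-convexity of $\ell^q$-norms, but $(\ell^2,\ell^p)_{\xi,1}=\ell^{r,1}\not\subset\ell^{r,r}=\ell^r$), and the correct general statement replaces $(\XA,\X)_{\xi,1}$ by $(\XA,\X)_{\xi,r}$. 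Since Assumption \ref{ass:oversmooth} requires $\XL$ to be a Banach space whenever $\xi\in(0,1)$, your restriction to that case covers everything the paper uses, but it would be worth recording that the equivalence as literally stated needs $\XL$ Banach (or the $\xi,r$ modification).
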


\begin{proposition}\label{app:interpolation_limiting}
Let $\X$ and $\XA$ be quasi-Banach spaces with a continuous embedding $\X\subset \XA$.
Then have $\X\subset \left( \XA ,\X \right)_{1,\infty}$ with  embedding constant equal to $1$. 
\end{proposition}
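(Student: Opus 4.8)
The plan is to unwind the definitions. Recall that for $f\in\X$ we must show $\sup_{t>0} t^{-1}K(t,f)\le \|f\|_{\X}$, where $K(t,f)=\inf_{h\in\X}\left[\|f-h\|_{\XA}+t\|h\|_{\X}\right]$. First I would simply take the competitor $h=f$ in the infimum defining $K(t,f)$. This gives $K(t,f)\le \|f-f\|_{\XA}+t\|f\|_{\X}=t\|f\|_{\X}$ for every $t>0$.

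Dividing by $t$ yields $t^{-1}K(t,f)\le \|f\|_{\X}$ for all $t>0$, and taking the supremum over $t>0$ gives $\|f\|_{(\XA,\X)_{1,\infty}}=\sup_{t>0}t^{-1}K(t,f)\le\|f\|_{\X}$. In particular every $f\in\X$ satisfies $\|f\|_{(\XA,\X)_{1,\infty}}<\infty$, so $f\in(\XA,\X)_{1,\infty}$, and the embedding constant is at most $1$.

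There is essentially no obstacle here — the only thing to be slightly careful about is that the choice $\theta=1$ is admissible in the definition of the $(\cdot,\cdot)_{\theta,\infty}$ scale (the excerpt explicitly allows $0\le\theta\le1$ for the $q=\infty$ case), so the space $(\XA,\X)_{1,\infty}$ is well defined even though $\theta=1$ is excluded for finite $q$. One might optionally remark that the embedding constant cannot in general be improved below $1$, but that is not needed for the statement.

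\begin{proof}
Let $f\in\X$. Inserting $h=f$ into the infimum defining the $K$-functional \eqref{eq:K_functional_embedded} gives, for every $t>0$,
\[
K(t,f)=\inf_{h\in\X}\infbracket{\|f-h\|_{\XA}+t\|h\|_{\X}}\leq \|f-f\|_{\XA}+t\|f\|_{\X}=t\|f\|_{\X}.
\]
Hence $t^{-1}K(t,f)\leq\|f\|_{\X}$ for all $t>0$, and taking the supremum over $t>0$ yields
\[
\|f\|_{\left(\XA,\X\right)_{1,\infty}}=\sup_{t>0}t^{-1}K(t,f)\leq\|f\|_{\X}.
\]
In particular $\|f\|_{\left(\XA,\X\right)_{1,\infty}}<\infty$, so $f\in\left(\XA,\X\right)_{1,\infty}$, and the embedding $\X\subset\left(\XA,\X\right)_{1,\infty}$ holds with embedding constant equal to $1$.
\end{proof}
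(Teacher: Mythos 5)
Your proof is correct and is essentially identical to the paper's: both insert $h=f$ into the $K$-functional to get $K(t,f)\leq t\|f\|_{\X}$ and then take the supremum. Nothing further to add.
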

\begin{proof}
Let $f\in \X$. Then we insert $h=f$ in the $K$-functional \eqref{eq:K_functional_embedded} to obtain \[ K(t,f)\leq t \|f\|_\X \quad \text{ for all }  t>0 .\] Hence $f\in \left( \XA ,\X \right)_{1,\infty}$ with $\|f\|_{\left( \XA ,\X \right)_{1,\infty}}\leq \|f\|_\X$. 
\end{proof}
\begin{proposition}[Reiteration] \label{prop:reiteration} Let $\X$ and $\XA$ be quasi-Banach spaces with a continuous embedding $\X\subset \XA$ and let $0< \xi < \theta \leq 1$.  Then 
\begin{align*}
 \left(\XA,\X\right)_{\xi,1}= \left( \XA , \left(\XA,\X \right)_{\theta,\infty}\right)_{\frac{\xi}{\theta},1}  
\end{align*}
with equivalent quasi-norms.
\end{proposition}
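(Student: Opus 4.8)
This is the reiteration theorem for the real method, in the special case where the left endpoint coincides with $\XA$, so I would follow the classical pattern. Set $\bar X:=\left(\XA,\X\right)_{\theta,\infty}$ and write $J(s,h):=\max\left(\|h\|_{\XA},\,s\|h\|_{\X}\right)$ for $h\in\X$, $s>0$, for the $J$-functional of the couple $\left(\XA,\X\right)$. Two elementary facts follow straight from the definitions: first, $K(s,h;\XA,\X)\le s^{\theta}\|h\|_{\bar X}$ for every $h\in\bar X$ and $s>0$; second, since $K(s,h;\XA,\X)\le\min\left(\|h\|_{\XA},\,s\|h\|_{\X}\right)$, taking the supremum over $s$ gives $\|h\|_{\bar X}\le\|h\|_{\XA}^{1-\theta}\|h\|_{\X}^{\theta}$ (in particular $\X\hookrightarrow\bar X\hookrightarrow\XA$), whence $s^{\theta}\|h\|_{\bar X}\le\|h\|_{\XA}^{1-\theta}\left(s\|h\|_{\X}\right)^{\theta}\le J(s,h)$ for all $s>0$. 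I would then prove the two continuous embeddings separately, which is enough since only equivalence of quasi-norms is claimed.

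For $\left(\XA,\bar X\right)_{\xi/\theta,1}\subset\left(\XA,\X\right)_{\xi,1}$ I would argue directly on $K$-functionals. Given $\sigma>0$, pick $h\in\bar X$ with $\|f-h\|_{\XA}+\sigma^{\theta}\|h\|_{\bar X}\le 2K\left(\sigma^{\theta},f;\XA,\bar X\right)$. The quasi-triangle inequality for $K(\sigma,\cdot\,;\XA,\X)$ together with $K(\sigma,f-h;\XA,\X)\le\|f-h\|_{\XA}$ and the first fact gives
\[
K(\sigma,f;\XA,\X)\le c\left(\|f-h\|_{\XA}+\sigma^{\theta}\|h\|_{\bar X}\right)\le 2c\,K\left(\sigma^{\theta},f;\XA,\bar X\right),
\]
with $c$ a fixed constant depending only on the structure constants of $\XA$ and $\X$. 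Multiplying by $\sigma^{-\xi}$, integrating $\mathrm{d}\sigma/\sigma$ over $(0,\infty)$ and substituting $t=\sigma^{\theta}$ bounds the left-hand integral by $\tfrac{2c}{\theta}\|f\|_{\left(\XA,\bar X\right)_{\xi/\theta,1}}$, which is the claimed embedding.

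For the reverse embedding $\left(\XA,\X\right)_{\xi,1}\subset\left(\XA,\bar X\right)_{\xi/\theta,1}$ I would pass to the $J$-description of the spaces $\left(\cdot,\cdot\right)_{\eta,1}$, i.e.\ use the equivalence of the $K$- and $J$-methods, which also holds for quasi-Banach couples and is unproblematic here because the fine index is $1$. Given $f\in\left(\XA,\X\right)_{\xi,1}$, choose a $J$-representation $f=\int_{0}^{\infty}u(s)\,\frac{\mathrm{d}s}{s}$, convergent in $\XA=\XA+\X=\XA+\bar X$ with $u(s)\in\XA\cap\X\subset\XA\cap\bar X$, such that $\int_{0}^{\infty}s^{-\xi}J(s,u(s))\,\frac{\mathrm{d}s}{s}$ is controlled by $\|f\|_{\left(\XA,\X\right)_{\xi,1}}$. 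The substitution $r=s^{\theta}$ rewrites this as $f=\int_{0}^{\infty}\tfrac{1}{\theta}u\!\left(r^{1/\theta}\right)\frac{\mathrm{d}r}{r}$, a $J$-representation of $f$ for the couple $\left(\XA,\bar X\right)$; and by the second fact, $J\!\left(r,u(r^{1/\theta});\XA,\bar X\right)\le J\!\left(r^{1/\theta},u(r^{1/\theta});\XA,\X\right)$. Inserting this bound and substituting $s=r^{1/\theta}$ back shows $\int_{0}^{\infty}r^{-\xi/\theta}J\!\left(r,\tfrac{1}{\theta}u(r^{1/\theta});\XA,\bar X\right)\frac{\mathrm{d}r}{r}$ is bounded by $\int_{0}^{\infty}s^{-\xi}J(s,u(s))\,\frac{\mathrm{d}s}{s}$, hence by a multiple of $\|f\|_{\left(\XA,\X\right)_{\xi,1}}$; so $f\in\left(\XA,\bar X\right)_{\xi/\theta,1}$ with the required estimate.

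The reverse embedding is where I expect the only genuine difficulty. A naive bound on $K(t,f;\XA,\bar X)$ obtained by testing with a single near-optimal element for $K(\cdot\,,f;\XA,\X)$ yields, besides the harmless term $K(t^{1/\theta},f;\XA,\X)$, a Holmstedt-type term $t\sup_{s\ge t^{1/\theta}}s^{-\theta}K(s,f;\XA,\X)$ whose integral against $t^{-\xi/\theta}\,\mathrm{d}t/t$ cannot be absorbed into $\|f\|_{\left(\XA,\X\right)_{\xi,1}}$: crude estimation leaves an integrand comparable to $K(s,f;\XA,\X)^{\theta}$ near $s=0$, which need not be integrable against $s^{-\xi}\,\mathrm{d}s/s$. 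Routing the argument through the $J$-method is precisely what avoids this; the only remaining points are bookkeeping with quasi-triangle constants and a reference that the $K$--$J$ equivalence theorem is available in the quasi-Banach setting used throughout the paper.
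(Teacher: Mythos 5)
Your proof is correct, but it takes a more self-contained route than the paper, which simply verifies the hypotheses of the abstract reiteration theorem and cites \cite[Thm.~3.11.5]{Bergh1976}: the paper checks that $\XA$ is of class $\mathcal{C}(0)$ and that $\left(\XA,\X\right)_{\theta,\infty}$ is of class $\mathcal{C}_K(\theta)$ and $\mathcal{C}_J(\theta)$ with respect to the couple $\left(\XA,\X\right)$, the latter two conditions being precisely your two ``elementary facts'' $K(s,h)\le s^{\theta}\|h\|_{\bar X}$ and $s^{\theta}\|h\|_{\bar X}\le J(s,h)$. What you do beyond that is unpack the proof of the cited theorem in this special case: your first embedding is the $\mathcal{C}_K$ half (a direct pointwise comparison of $K$-functionals after the substitution $t=\sigma^{\theta}$), and your second is the $\mathcal{C}_J$ half, routed through the $J$-representation and the $K$--$J$ equivalence theorem; your closing remark correctly identifies why the naive one-element (Holmstedt-type) bound on $K(t,f;\XA,\bar X)$ does not close the reverse embedding. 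The only point where the paper is more explicit is the limiting case $\theta=1$, where the characterization of class $\mathcal{C}(\theta)$ via \cite[Thm.~3.11.4]{Bergh1976} is unavailable and the $\mathcal{C}_K(1)$ and $\mathcal{C}_J(1)$ conditions must be checked by hand (this is the role of Proposition~\ref{app:interpolation_limiting}); your two facts do remain valid verbatim for $\theta=1$, so your argument covers this case, but it would be worth saying so. What your version buys is independence from the general reiteration machinery, at the price of invoking the $K$--$J$ equivalence for quasi-Banach couples, which, as you note, is unproblematic for fine index $1$ and $\xi\in(0,1)$.
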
 
\begin{proof}
In the notation of \cite[Def.~3.5.1]{Bergh1976} we have that $\XA$ is of class $\mathcal{C}\left(0, \left(\XA,\X\right)\right)$. Moreover,  $\left(\XA,\X \right)_{\theta,\infty}$ is of class $\mathcal{C}\left(\theta, \left(\XA,\X\right)\right)$. If $\theta<1$ this is due to \cite[Thm.~3.11.4]{Bergh1976}). For $\theta=1$ the definition yields that $\left(\XA,\X \right)_{1,\infty}$ is of class  $\mathcal{C}_K\left(1, \left(\XA,\X\right)\right)$ (see  \cite[Def.~3.5.1]{Bergh1976}). Moreover, from Proposition \ref{app:interpolation_limiting} we see 
\[ \|f\|_{\left(\XA,\X \right)_{1,\infty}}\leq \|f\|_\X \leq t\inv \max\{ \|f\|_\XA ,t \|	f\|_\X \}. \] 
Hence $\left(\XA,\X \right)_{1,\infty}$ is of class $\mathcal{C}_J\left(1, \left(\XA,\X\right)\right)$ (see again \cite[Def.~3.5.1]{Bergh1976}). 
Therefore, the result follows from the reiteration theorem  \cite[Thm.~3.11.5]{Bergh1976}.
\end{proof}

\section{Properties of Besov spaces}\label{sec:app_besov}
As elsewhere let $\Omega\subset\mathbb{R}^d$ be a bounded Lipschitz domain. 
We first review the relations of Besov spaces to $L^p$-spaces and to the Sobolev spaces 
$W^s_p(\Omega)$, $s\geq 0$, $p\in (1,\infty)$. 
Recall that for $s\in \mathbb{N}_0$ Sobolev norms are given by 
$\|f\|_{W^s_p}^p = \sum_{\alpha\in \mathbb{N}_0^d,|\alpha|\leq s}\|D^{\alpha}f\|_{L^p}^p$. For non-integer $s$ these spaces are also called Sobolev-Slobodeckij spaces, 
and for $p=2$ they coincide with the $H^s_2(\Omega)$ spaces defined on $\mathbb{R}^d$ via 
Fourier transform for all $s\in\mathbb{R}$. 
\begin{proposition}[Embeddings with $L^p$ and Sobolev spaces] \label{app:relation_lp_sob}
\begin{enumerate}
\item Let $p\in (1,\infty)$. Then we have continuous embeddings
\begin{align}\label{eq:besov_lp_pgeq1}
 \Bspace 0p{\min\{p,2\}} \subset L^p(\Omega)\subset \Bspace 0p{\max\{p,2\}}.
 \end{align}
For $p=1$ the following continuous embeddings hold true:
\begin{align}\label{eq:besov_l1} 
\Bspace 011 \subset L^1(\Omega)\subset \Bspace 01\infty.
\end{align} 
\item $\Bspace spp=W^s_p(\Omega)$ with equivalent norms for all 
$0<s\notin \mathbb{N}$ and $p\in (1,\infty)$, and in case of $p=2$ for all 
$s\in\mathbb{R}$. 
\item Let $p_0,p_1,q_0,q_1\in (0,\infty]$ and $-\infty<s_1<s_0<\infty$. Then 
\begin{align}\label{eq:besov_embed_q_eq}
\Bspace{s_0}{p_0}{q_0}\subset \Bspace{s_1}{p_1}{q_0}
\quad \mbox{if}\quad 
s_0-\frac{d}{p_0} = s_1-\frac{d}{p_1}
\end{align}
and 
\begin{align}\label{eq:besov_embed_q_neq}
\Bspace{s_0}{p_0}{q_0}\subset \Bspace{s_1}{p_1}{q_1}
\quad \mbox{if}\quad 
s_0-\frac{d}{p_0} > s_1-\frac{d}{p_1}.
\end{align}
\item Let $s\in \mathbb{R}$ and $p,q_0,q_1\in (0,\infty]$. Then 
\begin{align}\label{eq:besov_embed_sp_fixed}
\Bspace sp{q_0} \subset \Bspace sp{q_1}
\quad \mbox{if}\quad 
q_0\leq q_1
\end{align}
\end{enumerate}
\end{proposition}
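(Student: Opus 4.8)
The plan is to reduce each of the four assertions to its analogue on $\mathbb{R}^d$ — where all of them are classical — and then to transfer the result to the bounded Lipschitz domain $\Omega$ by means of a universal linear extension operator.

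The key device is a linear extension operator $E$ (for instance the Rychkov operator, see \cite{Triebel2010}) that is bounded $\Bspace spq\to\Brspace spq$ \emph{simultaneously} for all $s\in\mathbb{R}$ and all $p,q\in(0,\infty]$, and that satisfies $R\circ E=\id$, where $R$ denotes restriction to $\Omega$. Given this, any continuous embedding $\Brspace{s_0}{p_0}{q_0}\subset\Brspace{s_1}{p_1}{q_1}$ yields the corresponding embedding $\Bspace{s_0}{p_0}{q_0}\subset\Bspace{s_1}{p_1}{q_1}$ via the composition $R\circ\iota\circ E$, with an explicit bound on the embedding constant; the same reasoning transfers norm equivalences. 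This single observation disposes of parts (3), (4), and the Besov parts of (1) and (2) once the $\mathbb{R}^d$ versions are available.

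On $\mathbb{R}^d$ I would argue as follows. For (3), the inclusions $\Brspace{s_0}{p_0}{q_0}\subset\Brspace{s_1}{p_1}{q_0}$ in the borderline case $s_0-d/p_0=s_1-d/p_1$ and $\Brspace{s_0}{p_0}{q_0}\subset\Brspace{s_1}{p_1}{q_1}$ in the strict case $s_0-d/p_0>s_1-d/p_1$ are the standard Sobolev-type embeddings for Besov spaces (see \cite[Sec.~2.7]{Triebel1992}). For (4), the monotonicity $\Brspace sp{q_0}\subset\Brspace sp{q_1}$ for $q_0\le q_1$ is immediate from the monotonicity of the $\ell^q$-quasi-norms in the Littlewood--Paley definition. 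For (1), I would combine the identification $L^p(\mathbb{R}^d)=F^0_{p,2}(\mathbb{R}^d)$, valid for $p\in(1,\infty)$, with the elementary sandwich $B^0_{p,\min\{p,2\}}(\mathbb{R}^d)\subset F^0_{p,2}(\mathbb{R}^d)\subset B^0_{p,\max\{p,2\}}(\mathbb{R}^d)$, which follows from Minkowski's inequality in the relevant mixed norms; the case $p=1$ uses the classical embeddings $B^0_{1,1}\subset L^1\subset B^0_{1,\infty}$. Since $L^p(\Omega)$ is literally the restriction of $L^p(\mathbb{R}^d)$, these descend to $\Omega$ directly. For (2), for non-integer $s>0$ and $p\in(1,\infty)$ the Sobolev--Slobodeckij (Gagliardo) norm is exactly the difference/modulus-of-continuity norm of $B^s_{p,p}(\mathbb{R}^d)$, so $W^s_p=B^s_{p,p}$ with equivalent norms; for $p=2$ and arbitrary $s\in\mathbb{R}$, $H^s_2(\mathbb{R}^d)=B^s_{2,2}(\mathbb{R}^d)$ follows from Plancherel's theorem applied to the dyadic decomposition. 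Both spaces in (2) are again defined on $\Omega$ by restriction, so the identity carries over.

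I do not anticipate a genuine obstacle: every ingredient is textbook. The only point requiring care is that the extension operator must act on the \emph{full} quasi-Banach range $p,q\in(0,\infty]$ at once, which is why one invokes the Rychkov extension rather than a classical Stein-type construction; with that in hand, all four parts follow by the transfer argument above.
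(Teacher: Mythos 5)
Your proposal is correct, and its skeleton matches the paper's proof: reduce every assertion to its $\mathbb{R}^d$ analogue, obtain (1) from the identification $L^p(\mathbb{R}^d)=F^0_{p,2}(\mathbb{R}^d)$ sandwiched between $B^0_{p,\min\{p,2\}}$ and $B^0_{p,\max\{p,2\}}$, and cite the classical references for (2)--(4). The one genuine difference is the transfer mechanism. You invoke a universal linear extension operator (Rychkov) bounded on the whole quasi-Banach range and compose $R\circ\iota\circ E$. The paper instead observes that since $\Bspace spq$ carries the quotient (infimum-over-extensions) quasi-norm by definition, no extension operator is needed at all: given $g$ on $\Omega$, pick any near-optimal extension $f$, apply the $\mathbb{R}^d$ embedding to $f$, and note that $f$ is an admissible extension for the target space, so $\|g\|_{\Bspace{s_1}{p_1}{q_1}}\leq \|f\|_{\Brspace{s_1}{p_1}{q_1}}\leq C\|f\|_{\Brspace{s_0}{p_0}{q_0}}\leq 2C\|g\|_{\Bspace{s_0}{p_0}{q_0}}$. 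This is both more elementary and automatically uniform over all parameters, whereas your route requires the (true but nontrivial) fact that a single $E$ works simultaneously for all $s,p,q$; what your route buys in exchange is linearity of the extension, which is not needed for one-sided embeddings. For the norm equivalence in (2) both directions of the quotient-norm argument (or your $E$) are needed, and both you and the paper ultimately lean on the cited identifications $W^s_p(\mathbb{R}^d)=B^s_{p,p}(\mathbb{R}^d)$ and $H^s_2=B^s_{2,2}$, so there is no substantive divergence there.
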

\begin{proof}
First note that as all occurring spaces on bounded Lipschitz domains in $\mathbb{R}^d$ are defined by restriction of the respective spaces on $\mathbb{R}^d$ it suffices to prove the assertions for $\Omega=\mathbb{R}^d$.  
\begin{enumerate}
\item
Let $F^s_{p,q}(\Omega)$ be the function spaces defined in 
\cite[2.3.1.~Def.~2(ii)]{Triebel2010} for $\Omega=\mathbb{R}^d$.   
By \cite[3.2.4.(3)]{Triebel2010} 
we have continuous embeddings 
\begin{align}\label{eq:FandBconnection}
\Bspace sp{\min\{p,q\}}\subset F^s_{p,q}(\Omega)\subset \Bspace sp{\max \{p,q\}} \quad \text{for all} \quad p,q\in [1,\infty).
\end{align}
With this the embeddings for $p>1$ follow from the identity $F^0_{p,2}(\Omega)=L^p(\Omega)$ with equivalent norms. The latter identity can be found in \cite[2.5.6.]{Triebel2010}.  
For the assertion in the case $p=1$ we refer to \cite[2.5.7.(2)]{Triebel2010}. 
\item See \cite[\S 2.3 and Thm.~4.2.4]{Triebel1978}.
\item See \cite[Prop.~3.3.1]{Triebel2010}.
\item The inclusion for $\Omega$ replaced by $\mathbb{R}^d$ can be found in 
 \cite[eq.~(2.3.2/5)]{Triebel2010}. Using the definition of the $\Bspace spq$ spaces, 
 this easily implies the assertion.  
\end{enumerate}
\end{proof}

We now recall some well-known results on interpolation of Besov spaces. Besides 
$K$-interpolation reviewed in \Cref{sec:Kinterpol} we also refer to the 
complex interpolation method in some remarks. The latter only works for complex Banach
spaces $X_0,X_1$ as well as some some quasi-Banach spaces, 
and it is denoted by $[X_0,X_1]_{\theta} = X_{\theta}$ for 
$\theta\in (0,1)$ (see \cite{Bergh1976}). 
\begin{proposition}[interpolation of Besov spaces]\label{prop:inter_besov}
Let $s_0,s_1\in \mathbb{R}$, $s_0\neq s_1$, $\theta\in (0,1)$, and 
$s_\theta:= (1-\theta)s_0+ \theta s_1$.
\begin{enumerate}
\item For $p,q_0,q_1\in (0,\infty]$ and $q\in [1,\infty]$ we have 
\begin{align}\label{eq:besov_interp_p_fixed}
\left(\Bspace {s_0}{p}{q_0}, \Bspace {s_1}{p}{q_1} \right)_{\theta,q} 
= \Bspace {s_\theta}{p}{q}.
\end{align}
\item If $p_0,p_1, p_{\theta}\in (0,\infty)$ 
with 
$\frac{1}{p_{\theta}} = \frac{1-\theta}{p_0}+\frac{\theta}{p_1}$, then
\begin{align}\label{eq:besov_interp_p_eq_q}
\left(\Bspace {s_0}{p_0}{p_0}, \Bspace {s_1}{p_1}{p_1} \right)_{\theta,p_{\theta}} 
= \Bspace {s_\theta}{p_\theta}{p_{\theta}}.
\end{align}
\item If  $p_0,p_1, p_{\theta}\in [1,\infty)$ with 
$\frac{1}{p_{\theta}} = \frac{1-\theta}{p_0}+\frac{\theta}{p_1}$ and 
$q_0,q_{\theta}, q_1\in  [1,\infty)$,  with 
$\frac{1}{q_{\theta}} = \frac{1-\theta}{q_0}+\frac{\theta}{q_1}$, then 
\begin{align}\label{eq:complex_besov_interp}
\left[\Bspace {s_0}{p_0}{q_0}, \Bspace {s_1}{p_1}{q_1} \right]_{\theta} 
= \Bspace {s_\theta}{p_\theta}{q_{\theta}}.
\end{align}
\end{enumerate}
\end{proposition}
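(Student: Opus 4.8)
The plan is to reduce everything to the case $\Omega=\mathbb{R}^d$ and then, on $\mathbb{R}^d$, to the interpolation of suitable weighted sequence and $L^p$ spaces, for which the required identities are classical. Since $\Bspace spq$ is defined by restriction of $\Brspace spq$ and there is a single \emph{universal} extension operator $E$ which is simultaneously bounded $\Bspace{s_i}{p_i}{q_i}\to\Brspace{s_i}{p_i}{q_i}$ for $i=0,1$ with $RE=\mathrm{id}$, where $R$ denotes restriction to $\Omega$ (such operators exist on bounded Lipschitz domains in the full quasi-Banach range; see \cite{Triebel2010} and the references therein), the couple $(\Bspace{s_0}{p_0}{q_0},\Bspace{s_1}{p_1}{q_1})$ is a retract of $(\Brspace{s_0}{p_0}{q_0},\Brspace{s_1}{p_1}{q_1})$. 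Both the real method $(\cdot,\cdot)_{\theta,q}$ and the complex method $[\cdot,\cdot]_\theta$ commute with retracts, so it suffices to prove (1)--(3) on $\mathbb{R}^d$ and then apply $R$.

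On $\mathbb{R}^d$ I would fix a dyadic resolution of unity $(\varphi_j)_{j\in\nat}$ in Fourier space; the analysis map $f\mapsto(\varphi_j(D)f)_j$ and a synthesis map $(g_j)_j\mapsto\sum_j\psi_j(D)g_j$ (with $\psi_j$ a fattened $\varphi_j$) form a retraction--coretraction pair identifying $\Brspace spq$ with a complemented subspace of the weighted vector-valued sequence space $\ell^q_s(L^p):=\ell^q(\nat;2^{js}L^p(\mathbb{R}^d))$, uniformly in the indices involved. For (1) the inner space $L^p$ is the same at both endpoints, so $(\ell^{q_0}_{s_0}(L^p),\ell^{q_1}_{s_1}(L^p))_{\theta,q}=\ell^q_{s_\theta}(L^p)$ by the classical real interpolation of weighted vector-valued $\ell^q$ spaces with $s_0\neq s_1$ (see \cite[Ch.~5]{Bergh1976} for the Banach range and \cite{Triebel2010} for $0<p,q\le\infty$; the cases $q_i=\infty$ are covered by the sup-versions of the formulas), which reproves \cite[Thm.~2.4.2]{Triebel1978}. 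For (3) all indices are finite and $\ge 1$; here $[\ell^{q_0}_{s_0}(L^{p_0}),\ell^{q_1}_{s_1}(L^{p_1})]_\theta=\ell^{q_\theta}_{s_\theta}(L^{p_\theta})$ by combining Calder\'on's identity $[L^{p_0},L^{p_1}]_\theta=L^{p_\theta}$ with the corresponding statements for $\ell^q$ spaces and for weights, so the claim follows as in \cite[Thm.~2.4.7]{Triebel1978}.

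For (2) the endpoint spaces have coinciding integrability and fine indices, so the same retraction puts $\Brspace{s_i}{p_i}{p_i}$ inside $\ell^{p_i}_{s_i}(L^{p_i})$, and the elementary identity $\ell^{p}(L^{p})=L^{p}$ turns this into a \emph{weighted} $L^{p_i}$ space $L^{p_i}(\nat\times\mathbb{R}^d;\,2^{js_ip_i}\,\mathrm{d}j\,\mathrm{d}x)$. Since the second interpolation index $p_\theta$ coincides with the interpolated integrability exponent, the change-of-density (Stein--Weiss type) theorem for real interpolation of $L^p$ spaces gives $(L^{p_0}(w_0),L^{p_1}(w_1))_{\theta,p_\theta}=L^{p_\theta}(w_\theta)$ with $w_\theta^{1/p_\theta}=w_0^{(1-\theta)/p_0}w_1^{\theta/p_1}$; with $w_i=2^{js_ip_i}$ one gets $w_\theta=2^{js_\theta p_\theta}$, which is exactly the weight defining $\Brspace{s_\theta}{p_\theta}{p_\theta}$ in this model, and (2) follows (cf.\ \cite[Ch.~5]{Bergh1976}, \cite{Triebel2010}).

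I expect the only genuinely delicate points to be, first, that most textbook statements of the retract theorem and of the weighted $\ell^q$- and $L^p$-interpolation identities are formulated for Banach couples, so one must appeal to their quasi-Banach counterparts (these are available, since $p,q\le 1$ enter only through $L^p/\ell^p$ quasi-norms whose interpolation behaviour is well understood); and, second, in part (2), the bookkeeping of weight exponents through the change-of-density step and the check that they match the Besov normalization. Neither is a real obstruction: the proposition is essentially a reorganization of standard interpolation theory, and in the write-up I would state (1)--(3) with precise pointers to \cite{Triebel1978,Triebel2010,Bergh1976} rather than reproduce the arguments in full.
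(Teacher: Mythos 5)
Your proposal is correct and ends up exactly where the paper does: the paper's entire proof is a citation of Triebel's interpolation theorems for Besov spaces on domains (\cite[Thm.~3.3.6]{Triebel2010} for parts (1) and (3), \cite[Thm.~2.4.3 and Remark 8 in \S 3.3.6]{Triebel2010} for part (2)), and the retract-to-weighted-sequence-space argument you sketch is precisely the standard proof behind those cited results, to which you also ultimately defer.
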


\begin{proof}
See \cite[Thm.~3.3.6]{Triebel2010} for the first and last statement and 
\cite[Thm.~2.4.3 and Remark 8 in \S 3.3.6]{Triebel2010} for the second statement. 
\end{proof}

\section{On spaces of functions of bounded variation}
Finally, we also recall and generalize some results on functions of bounded variation. 
\begin{proposition}[Embedding]\label{app:bv_embed}
Let $a\geq 0$ with $a\geq \frac{d}{2}-1$. Then there is a continuous embedding $\BV (\Omega)\subset \Bspace {-a}22$. 
\end{proposition}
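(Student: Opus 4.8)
The plan is to reduce to the smallest admissible exponent $a_0:=\max\{0,\tfrac d2-1\}$ and then to build the embedding $\BV(\Omega)\subset\Bspace{-a_0}22$ by chaining the classical Sobolev-type embedding of $\BV$ with the $L^p$--Besov embeddings collected in Proposition~\ref{app:relation_lp_sob}. First I would note that it is enough to establish the claim for $a=a_0$: for $a>a_0$ the inclusion $\Bspace{-a_0}22\subset\Bspace{-a}22$ follows from \eqref{eq:besov_embed_q_neq} applied with $p_0=p_1=2$ and $q_0=q_1=2$, in which case the hypothesis $s_0-\tfrac d{p_0}>s_1-\tfrac d{p_1}$ collapses to $-a_0>-a$; the case $a=a_0$ is trivial.

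For $d=1$ we have $a_0=0$. A function in $\BV(\Omega)$ coincides a.e.\ with a difference of two bounded monotone functions, so $\BV(\Omega)\hookrightarrow L^\infty(\Omega)$, and since $\Omega$ is bounded this yields $\BV(\Omega)\hookrightarrow L^\infty(\Omega)\hookrightarrow L^2(\Omega)=\Bspace 022$, the last identity being the $p=2$ case of \eqref{eq:besov_lp_pgeq1}. For $d\ge2$ I would invoke the Gagliardo--Nirenberg--Sobolev inequality for functions of bounded variation on a bounded Lipschitz domain (see \cite{ambrosio2000functions}), which provides a continuous embedding $\BV(\Omega)\hookrightarrow L^p(\Omega)$ with $p:=\tfrac d{d-1}\in(1,2]$. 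Then \eqref{eq:besov_lp_pgeq1} gives $L^p(\Omega)\subset\Bspace 0p2$ because $\max\{p,2\}=2$, and \eqref{eq:besov_embed_q_eq} (which preserves the fine index) gives $\Bspace 0p2\subset\Bspace{1-d/2}22$, since $0-\tfrac dp=-(d-1)$ equals $(1-\tfrac d2)-\tfrac d2=1-d$. As $1-\tfrac d2=-a_0$ for every $d\ge2$, this completes the borderline case and hence the proposition.

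I do not expect a genuine obstacle here. The two points that require attention are, first, using the \emph{critical} Sobolev embedding of $\BV$, namely the one landing exactly in $L^{d/(d-1)}$ rather than in a smaller $L^p$, and second, keeping the fine index under control along the chain so as to end in a Besov space with fine index $2$ and not merely $\infty$; this is precisely why one must use \eqref{eq:besov_lp_pgeq1} with fine index $\max\{p,2\}=2$ instead of the coarser inclusion $L^p\subset\Bspace 0p\infty$.
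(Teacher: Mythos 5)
Your proposal is correct and follows essentially the same route as the paper: the critical embedding $\BV(\Omega)\subset L^{d/(d-1)}(\Omega)$, then $L^{p}(\Omega)\subset \Bspace 0p2$ via \eqref{eq:besov_lp_pgeq1}, then the Sobolev-type Besov embedding down to $\Bspace{-a}22$ (the paper folds your reduction to $a_0=\max\{0,\tfrac d2-1\}$ into the single condition $a+\tfrac d2\ge \tfrac dp$, which is only a cosmetic difference). The only nitpick is that for $d=2$ your last step is an identity rather than an application of \eqref{eq:besov_embed_q_eq}, since that embedding is stated for $s_1<s_0$; this does not affect the argument.
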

\begin{proof}
For all embeddings involving the space $\BV(\Omega)$ in this proof we refer to \cite[Cor. 3.49  \& Prop. 3.21]{ambrosio2000functions}.
For $d=1$ there is a continuous embedding $\BV (\Omega) \subset L^2(\Omega)$. We have $L^2(\Omega)=\Bspace 022\subset \Bspace{-a}22$, which yields the claim in this case.\\
For $d>1$ we set $p:=\frac{d}{d-1}$. Then $p\in (1,2]$ and there is a continuous embedding $\BV(\Omega)\subset L^{p}(\Omega)$. By Proposition \ref{app:relation_lp_sob} we have a continuous embedding $L^{p}(\Omega)\subset \Bspace 0p2$. Furthermore, $ a+\frac{d}{2}\geq d-1=\frac{d}{p}$ yields a continuous embedding $\Bspace 0p2 \subset \Bspace {-a}22$. Putting together the latter three embeddings yields the claim.
\end{proof}

\begin{proposition}\label{app:bv_interpolation}
Let $a\geq 0$, $s\in (-a,1)$, and $\Omega\subset \mathbb{R}^d$ a bounded Lipschitz domain.  Then 
\begin{align*}
\Bspace s{t_s}{t_s} = \left( \Bspace {-a}22, \BV(\Omega) \right)_{\theta_s,t_s} \quad\text{ with }  \theta_s:= \frac{s+a}{a+1} \text { and }  t_s:= \frac{2a+2}{s+2a+1}
\end{align*}
with equivalent norms.
\end{proposition}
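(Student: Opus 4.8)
The plan is to transfer the identity from the full space $\mathbb{R}^d$, where it is essentially known, to the bounded Lipschitz domain $\Omega$ by a retraction argument. First I would record the full-space version:
\[
\Brspace s{t_s}{t_s} = \bigl(\Brspace {-a}22,\BV(\mathbb{R}^d)\bigr)_{\theta_s,t_s}
\]
with equivalent norms. For $\Omega=\mathbb{R}^d$ this is \cite[Thm.~1.4]{cohen2003harmonic}; alternatively, for $a>0$ it can be pieced together from the Besov interpolation identity \eqref{eq:besov_interp_p_eq_q} (which gives $(\Brspace {-a}22,\Brspace 111)_{\theta_s,t_s}=\Brspace s{t_s}{t_s}$ after checking the exponent relations $\tfrac1{t_s}=\tfrac{1-\theta_s}{2}+\theta_s$ and $s=-a(1-\theta_s)+\theta_s$) together with the fact from \cite{cohen2003harmonic} that, interpolated against $\Brspace {-a}22$, the endpoints $\BV(\mathbb{R}^d)$ and $\Brspace 111$ generate the same real interpolation space for the finite fine index $t_s<\infty$ appearing here. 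In any case I shall assume this full-space identity.

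The key step is to produce a single bounded linear extension operator $E$ that acts simultaneously as $E\colon\Bspace {-a}22\to\Brspace {-a}22$ and as $E\colon\BV(\Omega)\to\BV(\mathbb{R}^d)$, with the restriction operator $R$ as a left inverse, $RE=\id$. For bounded Lipschitz domains one may use a universal Rychkov/Stein-type extension operator, which is bounded on $B^\sigma_{p,q}(\Omega)\to B^\sigma_{p,q}(\mathbb{R}^d)$ for all admissible $\sigma,p,q$ — in particular for the negative smoothness index $-a$ — and then verify, using the characterisation of $\BV$ through first-order difference quotients, that the same operator also maps $\BV(\Omega)$ boundedly into $\BV(\mathbb{R}^d)$; alternatively one invokes the classical $\BV$-extension from \cite{ambrosio2000functions} and checks separately that it is bounded on $\Bspace {-a}22$. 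Exhibiting one operator that handles both the rough endpoint $\BV$ and the negative-order endpoint $\Brspace {-a}22$ is the part of the argument I expect to require the most care.

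Granting such an $E$, the conclusion is routine. Then $\bigl(\Bspace {-a}22,\BV(\Omega)\bigr)$ is a retract of $\bigl(\Brspace {-a}22,\BV(\mathbb{R}^d)\bigr)$ via the pair $(E,R)$, so, as the real $K$-method is an exact interpolation functor, $E$ maps $\bigl(\Bspace {-a}22,\BV(\Omega)\bigr)_{\theta_s,t_s}$ boundedly into $\bigl(\Brspace {-a}22,\BV(\mathbb{R}^d)\bigr)_{\theta_s,t_s}$, $R$ maps the latter boundedly onto the former, and $RE=\id$ on the interpolation space. Combining this with the full-space identity and with the definition of $\Bspace s{t_s}{t_s}$ as the restriction space $R(\Brspace s{t_s}{t_s})$ equipped with its infimum norm — which is precisely the norm produced by the retraction — yields
\[
\bigl(\Bspace {-a}22,\BV(\Omega)\bigr)_{\theta_s,t_s}
= R\bigl(\Brspace s{t_s}{t_s}\bigr)
= \Bspace s{t_s}{t_s}
\]
with equivalent norms, which is the assertion. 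In the boundary case $a=0$ one has $\Brspace {-a}22=L^2(\mathbb{R}^d)$ and $\Bspace {-a}22=L^2(\Omega)$ (Proposition \ref{app:relation_lp_sob}), so the same scheme applies, using that the chosen extension operator is bounded on $L^2$ and on $\BV$.
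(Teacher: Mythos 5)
Your first step (quoting the full-space identity from \cite[Thm.~1.4]{cohen2003harmonic}) matches the paper, but your transfer to $\Omega$ hinges on an ingredient you do not establish: a \emph{single} linear extension operator $E$ with $RE=\id$ that is bounded simultaneously as $E\colon\Bspace{-a}22\to\Brspace{-a}22$ and $E\colon\BV(\Omega)\to\BV(\mathbb{R}^d)$. You correctly flag this as the delicate point, but neither of your two suggested routes is carried out, and neither is routine: Stein-type operators are tailored to nonnegative smoothness, Rychkov's operator is not obviously bounded on $\BV$ (which is not a Besov space), and the classical $\BV$-extension of \cite[Prop.~3.21]{ambrosio2000functions} is not obviously bounded on the negative-order space $\Bspace{-a}22$. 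As written, the retract argument therefore rests on an unproved claim that is essentially the whole difficulty.

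The paper shows that no common extension operator is needed, by proving the two inclusions directly on the level of the $K$-functional with \emph{endpoint-specific} extensions. For $\Bspace s{t_s}{t_s}\subset(\Bspace{-a}22,\BV(\Omega))_{\theta_s,t_s}$ it extends $f$ once to $\tilde f\in\Brspace s{t_s}{t_s}$ (using only the restriction-space definition of the Besov norm), takes a near-optimal full-space decomposition of $\tilde f$ at each $t$, and restricts both pieces to $\Omega$, giving $K(t,f)\leq 2K(t,\tilde f)$. For the converse inclusion it takes a near-optimal decomposition $f=f_1+f_2$ on $\Omega$ and extends $f_1$ and $f_2$ \emph{separately}, $f_1$ by the definition of $\Bspace{-a}22$ as a restriction space and $f_2$ by the $\BV$-extension of \cite{ambrosio2000functions}; the sum $\tilde f_1+\tilde f_2$ is an extension of $f$ whose full-space $K$-functional at $t$ is controlled by $K(t,f)$. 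So only the (trivially available) boundedness of an extension for each endpoint space individually is used. If you want to keep your retract formulation, you must either construct the common operator or replace that step by the paper's piecewise-extension argument.
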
  
\begin{proof}
First note that if $f\in \BV(\mathbb{R}^d)$, then \[ f|_{\Omega}\in \BV(\Omega)\quad \text{with}\quad \|f|_{\Omega}\|_{\BV(\Omega)}\leq \|f\|_{\BV(\mathbb{R}^d)} .\]
Due to \cite[Thm.~ 1.4]{cohen2003harmonic} to claim holds true for $\Omega=\mathbb{R}^d$. Note that here the condition $\gamma <1-\frac{1}{d}$ from the latter reference on $\gamma:=\frac{-(2a+2)}{d}+1$ is satisfied. 
Let $c_1$ be a constant such that the norm in $\Brspace s{t_s}{t_s}$ is bounded by $c_1$ times the norm in $\left( \Brspace {-a}22, \BV(\mathbb{R}^d) \right)_{\theta_s,t_s}$ and the other way around.\\ 
We transfer this result to bounded Lipschitz domains. To this end we  separately prove both inclusions in the stated identity.\\
Let $f\in \Bspace s{t_s}{t_s}$. Then there exists $\tilde{f}\in  \Brspace s{t_s}{t_s}$ with 
\[ \tilde{f}|_{\Omega}=f \quad \text{and} \quad \|\tilde{f}\|_{\Brspace s{t_s}{t_s}} \leq 2 \| f \|_{\Bspace s{t_s}{t_s}}.
\] 
Let $t>0$ and $\tilde{f}=\tilde{f}_1+\tilde{f}_2$ with $\tilde{f}_1\in \Brspace {-a}22$ and $\tilde{f}_2\in \BV(\mathbb{R}^d)$ be a decomposition such that  
\[ \|\tilde{f}_1\|_{\Bospace {-a}22(\mathbb{R}^d)} + t \|\tilde{f}_2\|_{\BV(\mathbb{R}^d)} \leq 2 K(t,\tilde{f}) \] 
with the $K$-functional from real interpolation of Banach spaces. Then $\tilde{f}_1|_\Omega\in \Bspace{-a}22$, $\tilde{f}_2|_{\Omega} \in \BV(\Omega)$,  $f=f_1+f_2$ and 
\[ 
K(t,f)\leq \|\tilde{f}_1|_\Omega\|_{\Bspace {-a}22} + t \|\tilde{f}_2|_{\Omega}\|_{\BV(\Omega)}\leq 2 K(t,\tilde{f}). \] 
Hence with the definition of the norm on real interpolation spaces we obtain 
\[ 
\|f\|_{\left( \Bspace {-a}22, \BV(\Omega) \right)_{\theta_s,t_s}} 
\!\!\!\leq 2  \|\tilde{f}\|_{ \left( \Brspace {-a}22, \BV(\mathbb{R}^d) \right)_{\theta_s,t_s}} 
\!\!\!\leq 2c_1 \| \tilde{f}\|_{\Brspace s{t_s}{t_s}} 
\leq 4c_1 \| f\|_{\Bspace s{t_s}{t_s}} .\]
We turn to the other inclusion.
There exists a constant $C_{\mathrm{ext}}>0$ such that for every ${f\in \Bspace {-a}22}$ there exists $\tilde{f}\in \Brspace{-a}22$ with $\tilde{f}|_{\Omega}=f$ and 
$\|\tilde{f}\|_{\Brspace {-a}22} \leq C_{\mathrm{ext}} \|f\|_{\Bspace {-a}22}$ 
and likewise for every $f\in \BV (\Omega)$ there exists $\tilde{f}\in \BV(\mathbb{R}^d)$ with 
\[ 
\tilde{f}|_{\Omega}=f \quad \text{and}\quad  \|\tilde{f}\|_{\BV(\mathbb{R}^d)} \leq C_{\mathrm{ext}} \|f\|_{\BV(\Omega)}.
\] 
This holds true by the definition of $\Bspace {-a}22$ via restrictions and due to \cite[Prop.~3.21]{ambrosio2000functions} for of bounded variation functions. 
Now suppose $f\in \left( \Bspace {-a}22, \BV(\Omega) \right)_{\theta_s,t_s}$. Let $f=f_1+f_2$ with $f_1\in \Bspace {-a}22 $ and $f_2\in \BV(\Omega)$ such that 
 \[ \|f_1\|_{\Bspace {-a}22} + t \|f_2\|_{\BV(\Omega)} \leq 2 K(t,f). 
 \]
Let $\tilde{f}_1\in \Brspace {-a}22$ and $\tilde{f}_2\in \BV(\mathbb{R}^d)$ be extensions as above. Then $\tilde{f}:=\tilde{f}_1+\tilde{f}_2$ satisfies $\tilde{f}|_{\Omega}=f$, and
\[ 
K(t,\tilde{f}_1+\tilde{f}_2)
\leq \|\tilde{f}_1\|_{\Brspace{-a}22} + t \|\tilde{f}_2\|_{\BV(\mathbb{R}^d)}\leq 2C_{e}   K(t,f).\]
We conclude that  
\begin{align*}
\|f\|_{\Bspace s{t_s}{t_s}} 
&\leq \|\tilde{f}\|_{\Brspace s{t_s}{t_s}} 
\leq c_1 \|\tilde{f}\|_{ \left( \Brspace {-a}22, \BV(\mathbb{R}^d) \right)_{\theta_s,t_s}}  \\
&\leq 2 c_1 C_{e}  \|f\|_{\left( \Bspace {-a}22, \BV(\Omega) \right)_{\theta_s,t_s}}
.
\end{align*}
\end{proof}

\bibliographystyle{abbrv}  
\bibliography{lit} 	
\end{document}